\documentclass[12pt]{amsart}
\usepackage{pifont}
\usepackage{txfonts}
\usepackage{}
\usepackage{amsfonts}
\usepackage{graphicx}
\usepackage{epstopdf}
\usepackage{amsmath}
\usepackage{amsthm}
\usepackage{latexsym,bm}
\usepackage{amssymb}
\usepackage{esint}
\usepackage{enumitem}
\usepackage{indentfirst}
\usepackage{amscd}
\oddsidemargin0cm \evensidemargin0cm \textwidth16cm \textheight22cm
\topmargin -0.4in \makeatletter


\pagestyle{plain}\pagenumbering{arabic}

\vfuzz2pt 
\hfuzz2pt 
\newtheorem{thm}{Theorem}[section]
\newtheorem{cor}[thm]{Corollary}
\newtheorem{lem}[thm]{Lemma}
\newtheorem{prop}[thm]{Proposition}
\newtheorem{defn}[thm]{Definition}
\numberwithin{equation}{section}
\numberwithin{Remark}{section}

\begin{document}

\title{A flow approach to the generalized Loewner-Nirenberg problem of the $\sigma_k$-Ricci equation}

\renewcommand{\subjclassname}{\textup{2000} Mathematics Subject Classification}
 \subjclass[2010]{Primary 53C25; Secondary 58J05, 53C30, 34B15}

\begin{abstract}
We introduce a flow approach to the generalized Loewner-Nirenberg problem $(\ref{equn_sigmak})-(\ref{equn_boundaryblowingup})$ of the $\sigma_k$-Ricci equation on a compact manifold $(M^n,g)$ with boundary. We prove that for initial data $u_0\in C^{4,\alpha}(M)$ which is a subsolution to the $\sigma_k$-Ricci equation $(\ref{equn_sigmak})$, the Cauchy-Dirichlet problem $(\ref{equn_flowsigmak})-(\ref{equn_flowboundarydata})$ has a unique solution $u$ which converges in $C^4_{loc}(M^{\circ})$ to the solution $u_{\infty}$ of the problem $(\ref{equn_sigmak})-(\ref{equn_boundaryblowingup})$, as $t\to\infty$.
\end{abstract}

\renewcommand{\subjclassname}{\textup{2000} Mathematics Subject Classification}
 \subjclass[2010]{Primary 53C44; Secondary 35K55, 35R01, 53C21}


\thanks{$^\dag$ Research partially supported by the National Natural Science Foundation of China No. 11701326 and  the Young Scholars Program of Shandong University 2018WLJH85.}

\address{Gang Li, Department of Mathematics, Shandong University, Jinan, Shandong Province, China}
\email{runxing3@gmail.com}

\maketitle


\section{Introduction}

Let $(M^n,g)$ be a compact Riemannian manifold with boundary of dimension $n\geq3$. Denote $M^{\circ}$ to be the interior of $M$. In \cite{LiGang}, we considered the Cauchy-Dirichlet problem of the Yamabe flow which starts from a positive subsolution of the Yamabe equation $(\ref{equn_scalarcurvatureequn})$ and converges in $C_{loc}^2(M^{\circ})$ to the solution to the Loewner-Nirenberg problem
\begin{align}
&\label{equn_scalarcurvatureequn}\frac{4(n-1)}{n-2}\Delta u-R_gu-n(n-1)u^{\frac{n+2}{n-2}}=0,\,\,\,\text{in}\,\,M^{\circ},\\
&\label{equn_blowingupboundarydata}u(p)\to\infty,\,\,\,\text{as}\,\,p\to\partial M,
\end{align} 
which is originally studied by Loewner and Nirenberg \cite{LL} on Euclidean domains, and later by Aviles and McOwen \cite{AM}\cite{AM1} on general compact manifolds with boundary. A signature feature of our flow is that it preserves the solution $u(\cdot,t)$ as a sub-solution to the Yamabe equation for $t>0$. In this paper, we extend this approach to study the generalized Loewner-Nirenberg problem for the fully nonlinear equation studied in \cite{Guan1} and \cite{GSW}.

\begin{defn}
For $(\lambda_1,\,...,\,\lambda_n)\in \mathbb{R}^n$ and $k=1,..,n$, we define the elementary symmetric functions as
\begin{align*}
\sigma_k(\lambda_1,\,...,\,\lambda_n)=\sum_{i_1<...<i_k}\lambda_{i_1}\,...\,\lambda_{i_k},
\end{align*}
and define the cone
\begin{align*}
\Gamma_k^+=\{\Lambda=(\lambda_1,\,...,\,\lambda_n)\in \mathbb{R}^n\,\big|\,\sigma_j(\Lambda)>0,\,\forall j\leq k\},
\end{align*}
which is the connected component of the set $\{\sigma_k>0\}$ containing the positive definite cone on $\mathbb{R}^n$. We also define $\Gamma_k^-=-\Gamma_k^+$. For a symmetric $n\times n$ matrix $A$, $\sigma_k(A)$ is defined to be $\sigma_k(\Lambda)$ with $\Lambda=(\lambda_1,\,...,\,\lambda_n)$ the eigenvalues of $A$.
\end{defn}
The $\sigma_k$-scalar curvature equation is introduced in \cite{Viaclovsky}. Let $(M^n,g)$ be a smooth compact Rimeannian manifold with boundary of dimension $n\geq3$. Denote $Ric_g$ as the Ricci tensor of $g$. In \cite{GSW}, for any $k=1,...,n$, the authors studied the Dirichlet boundary value problem of the $\sigma_k$ equation of $-Ric_g$, in seek of a conformal metric $\bar{g}=e^{2u}g$ such that $Ric_{\bar{g}}\in \Gamma_k^-$ and
\begin{align}
&\sigma_k(-Ric_{\bar{g}})\equiv \sigma_k(-\bar{g}^{-1}Ric_{\bar{g}})=\bar{\beta}_{k,n}\,\,\,\text{in}\,\,M,\\
&\label{equn_bdvzero}u=0\,\,\,\,\text{on}\,\,\partial M,
\end{align}
where $\bar{\beta}_{k,n}=(n-1)^k\left(\begin{matrix} n\\ k\end{matrix}\right)$, or equivalently, we have the {\it $\sigma_k$-Ricci equation}
\begin{align}\label{equn_sigmak}
\sigma_k(\bar{\nabla}^2u)=\bar{\beta}_{k,n}e^{2ku}
\end{align}
 where
 \begin{align}\label{equn_thetensor}
 \bar{\nabla}^2 u&=-Ric_g+ (n-2)\nabla^2u+\Delta u\,g+(n-2)(|d u|^2g-du \otimes d u).
 \end{align}
 A more interesting result in \cite{GSW} is that they generalized the Loewner-Nirenberg problem to the $\sigma_k$-Ricci equation $(\ref{equn_sigmak})$ (see also \cite{Guan1}). They proved that there exists a unique solution $u_k$ to $(\ref{equn_sigmak})$ with the property that
 \begin{align}\label{equn_boundaryblowingup}
 u_k(p)\to+\infty
 \end{align}
uniformly as $p\to \partial M$; moreover,
\begin{align}\label{equn_limitboundaryorder1}
\lim_{p\in \partial M}[u_k(p)+\log(r(p))]=0
\end{align}
as $p\to \partial M$, where $r(p)$ is the distance of $p$ to $\partial M$. Notice that in \cite{Guan1} Guan gave an alternative approach to similar results, using metrics of negative Ricci curvature in the conformal class constructed in \cite{Lohkamp} as the background metric. In comparison, the argument in \cite{GSW} uses a general background conformal metric and concludes the existence of a prescribed $\sigma_n$-Ricci curvature metric of negative Ricci curvature. In this paper, we give a flow approach to the generalized Loewner-Nirenberg problem to the $\sigma_k$-Ricci equation $(\ref{equn_sigmak})$ starting from a sub-solution to $(\ref{equn_sigmak})$, with a background metric of negative Ricci curvature in the conformal class.
In particular, we introduce the Cauchy-Dirchlet problem $(\ref{equn_flowsigmak})-(\ref{equn_flowboundarydata})$ of the $\sigma_k$-Ricci curvature flow.

In order to get the lower bound control of the blowing up ratio near the boundary, we need to assume that the boundary data $\phi$ could not go to infinity too slowly as $t\to\infty$.

\begin{defn}\label{defn_lowspeedincreasingfunction}
We call a function $\xi(t)\in C^1([0,\infty))$ a low-speed increasing function if, $\xi(t)>0$ for $t\geq 0$, $\lim_{t\to\infty}\xi(t)=\infty$, and there exist two constants $T>0$ and $\tau>0$ such that
for $t\geq T$,
\begin{align}\label{inequn_lowspeedincreasing}
\xi'(t)\leq \tau.
\end{align}
\end{defn}
Here are some examples of low-speed increasing functions: $t^{\alpha}$ for some $0<\alpha<1$, $\log(t)$, and finitely many composition of $\log$ functions: $\log\circ\log\circ...\circ\log(t)$ for $t>0$ large, etc.

\begin{thm}\label{thm_main}
Assume $(M^n, g)$ $(n\geq3)$ is  a compact manifold with boundary of $C^{4,\alpha}$, and $(M,g)$ is either a compact domain in $\mathbb{R}^n$ or with Ricci curvature
 $Ric_g\leq -\delta_0g$ for some $\delta_0\geq (n-1)$. Assume $u_0\in C^{4,\alpha}(M)$ is a subsolution to $(\ref{equn_sigmak})$
    satisfying $(\ref{inequn_c4compatiblebdincreasing})$ at the points $x\in \partial M$ where $v(x)=0$ for the function $v$ defined in $(\ref{equn_defnfunctionA1})$.
    Also, assume $\phi\in C^{4+\alpha,2+\frac{\alpha}{2}}(\partial M\times[0,T_0])$ for all $T_0>0$, $\phi_t(x,t)\geq 0$ on $\partial M\times[0,+\infty)$ and
     $\phi$ satisfies the compatible condition $(\ref{equn_compatibleequations})$ with $u_0$. Moreover,  assume that there exist a low-speed increasing function $\xi(t)$ satisfying
 $(\ref{inequn_lowspeedincreasing})$ for some $T>0$ and $\tau>0$, and a constant $T_1>T$ such that $\phi(x,t)\geq \log(\xi(t))$ for
 $(x,t)\in \partial M\times[T_1,\infty)$. Then there exists a unique solution $u\in C^{4,2}(M\times [0,+\infty))$ to the Cauchy-Dirichlet problem $(\ref{equn_flowsigmak})-(\ref{equn_flowboundarydata})$ such that
      $u\in C^{4+\alpha,2+\frac{\alpha}{2}}(M\times[0,T])$ for all $T>0$.
Moreover, the solution $u$ converges to a solution $u_{\infty}$ to the equation $(\ref{equn_sigmak})$ locally uniformly on $M^{\circ}$ in $C^4$, and
\begin{align*}
\lim_{x\to \partial M} \,(u_{\infty}(x)+\log(r(x)))=0,
\end{align*}
where $r(x)$ is the distance of $x$ to $\partial M$.
\end{thm}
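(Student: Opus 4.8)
The plan is to run the $\sigma_k$-Ricci flow $(\ref{equn_flowsigmak})$ starting from $u_0$, show it exists for all time and is monotone nondecreasing in $t$ while every time slice remains a subsolution of $(\ref{equn_sigmak})$, extract the monotone limit $u_\infty$ on $M^\circ$ from uniform interior estimates, and finally use barriers near $\partial M$ to identify the blow-up rate. \emph{Short-time existence and uniqueness.} Because $\sigma_k^{1/k}$ is concave and strictly monotone on $\Gamma_k^+$ and $u_0$ is a subsolution, so that in particular $\bar\nabla^2 u_0\in\Gamma_k^+$, the operator in $(\ref{equn_flowsigmak})$ is uniformly parabolic near $u_0$; with $u_0\in C^{4,\alpha}(M)$, $\phi\in C^{4+\alpha,2+\alpha/2}$, and the compatibility conditions in the hypotheses, the standard theory of concave fully nonlinear parabolic Cauchy--Dirichlet problems yields a unique $u\in C^{4+\alpha,2+\alpha/2}(M\times[0,T_*))$ on a maximal interval, and uniqueness in $C^{4,2}(M\times[0,\infty))$ follows from the parabolic comparison principle. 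A continuity and connectedness argument, together with the preserved subsolution property of the next step, keeps $\bar\nabla^2 u(\cdot,t)$ in $\Gamma_k^+$, so the flow stays admissible as long as it exists.

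\emph{Monotonicity, the preserved subsolution property, and long-time existence.} Differentiating $(\ref{equn_flowsigmak})$ in $t$, $u_t$ solves a linear parabolic equation; at $t=0$ the inequality $u_t\ge0$ is exactly the statement that $u_0$ is a subsolution of $(\ref{equn_sigmak})$, and on $\partial M$ one has $u_t=\phi_t\ge0$, so the parabolic maximum principle gives $u_t\ge0$, hence $u(\cdot,t)$ is nondecreasing and $u\ge u_0$; equivalently $\sigma_k(\bar\nabla^2 u(\cdot,t))\ge\bar\beta_{k,n}e^{2ku(\cdot,t)}$ for all $t$, i.e.\ every time slice is again a subsolution of $(\ref{equn_sigmak})$. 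For each fixed $T$, since $\phi$ is bounded on $\partial M\times[0,T]$ one gets a $C^0$ bound on $M\times[0,T]$ by comparison (lower barrier $u_0$, upper barrier from $\phi$ and a fixed supersolution), and then gradient estimates, the interior and boundary second-order estimates available for $\sigma_k$-operators, Evans--Krylov, and Schauder bootstrapping upgrade this to a uniform $C^{4+\alpha,2+\alpha/2}$ bound on $M\times[0,T]$; hence $T_*=\infty$ and $u\in C^{4+\alpha,2+\alpha/2}(M\times[0,T])$ for all $T$. The point for the convergence is that on any compact $K$ in $M^\circ$ the bounds are uniform in $t\in[0,\infty)$: fixing a ball $B$ with $K\subset B$ and $\overline B\subset M^\circ$ and letting $w$ be the generalized Loewner--Nirenberg solution of $(\ref{equn_sigmak})$ on $B$ (which blows up on $\partial B$), the fact that each $u(\cdot,t)$ is an admissible subsolution of $(\ref{equn_sigmak})$ and $w=+\infty>u(\cdot,t)$ near $\partial B$ forces $u(\cdot,t)\le w$ on $B$ by elliptic comparison, giving a $t$-independent $C^0$ bound on $K$, whence the interior estimates and Evans--Krylov/Schauder give a $t$-independent $C^4$ bound on $K$.

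\emph{Convergence and boundary asymptotics.} By monotonicity and the uniform interior estimates, $u(\cdot,t)\nearrow u_\infty$ on $M^\circ$ with $u_\infty\in C^4_{loc}(M^\circ)$ and convergence in $C^4_{loc}(M^\circ)$; since $u_\infty-u_0=\int_0^\infty u_t\,dt$ is finite on $M^\circ$, one gets $u_t\to0$ locally uniformly on $M^\circ$ (first along a sequence, then for the whole flow by monotonicity), so letting $t\to\infty$ in $(\ref{equn_flowsigmak})$ shows $u_\infty$ solves $(\ref{equn_sigmak})$ in $M^\circ$. It remains to prove $u_\infty(x)+\log r(x)\to0$ as $x\to\partial M$. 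For the upper bound, work in a collar $\{0<r(x)<\delta\}$ and use the local supersolutions of $(\ref{equn_sigmak})$ of the form $-\log r(x)+O(r)$ constructed as in \cite{GSW}; such a supersolution dominates $u_\infty$ on $\{r=\delta\}$ by the compact-subset bound and blows up at least as fast as $-\log r$ at $\partial M$, and comparison gives $\limsup_{x\to\partial M}(u_\infty(x)+\log r(x))\le0$. For the lower bound one builds, on the collar and for each small $r_0$ and each large $t$, a subsolution $\underline w_t$ of the parabolic flow that is $\le\inf_M u_0$ on $\{r=\delta\}$ and $\le\log\xi(t)\le\phi(\cdot,t)$ on $\{r=0\}$, arranged so that $\underline w_t\ge-\log r_0-o(1)$ on $\{r=r_0\}$ as $t\to\infty$; here the low-speed bound $\xi'\le\tau$ is what lets $\underline w_t$ be a genuine subsolution while $\log\xi(t)\to\infty$ still propagates inward fast enough, and comparison yields $u(\cdot,t)\ge\underline w_t$, hence $\liminf_{x\to\partial M}(u_\infty(x)+\log r(x))\ge0$. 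Combining the two bounds proves the asymptotics; by the uniqueness in \cite{GSW}, $u_\infty$ is in fact the generalized Loewner--Nirenberg solution of $(\ref{equn_sigmak})$, which also gives $(\ref{equn_limitboundaryorder1})$-type behavior.

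\emph{Main obstacle.} I expect the crux to be the a priori estimates of the second step, specifically the interior second-order (hence, via Evans--Krylov, $C^{2,\alpha}$) estimate for the fully nonlinear parabolic operator that is uniform in $t$ on compact subsets of $M^\circ$: one must exploit the concavity of $\sigma_k^{1/k}$ and an appropriate auxiliary function and control the degeneration of the estimate as $x\to\partial M$, where $u\to\infty$. A second, more technical, difficulty is the barrier construction in the last step that upgrades the qualitative divergence $\phi\to\infty$ to the sharp rate $-\log r$; this is precisely where the low-speed increasing hypothesis $(\ref{inequn_lowspeedincreasing})$ is needed, since without a lower bound on the growth of $\phi$ the boundary data need not force $u_\infty$ to blow up at the Loewner--Nirenberg rate.
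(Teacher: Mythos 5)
Your proposal is correct and follows essentially the same route as the paper: monotonicity via the linearized flow, global a priori estimates up to second order plus Krylov--Schauder for long-time existence, uniform interior bounds from comparison with blowing-up supersolutions (the paper reduces to the $\sigma_1$ case via Maclaurin and compares with the scalar-curvature Loewner--Nirenberg solution, a cosmetic difference from your ball-by-ball $\sigma_k$ comparison), and a time-dependent lower barrier $-\log(r+\xi(t)^{-1})+w(x)$ whose subsolution property is exactly where $\xi'\le\tau$ enters. Your identification of the two main obstacles (the $t$-independent interior second-order estimates and the lower barrier construction) matches the paper's Lemmas on interior estimates and its Lemma \ref{lem_boundarylowerbd1}.
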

Notice that our assumption on the boundary data $\phi$ and the speed that $\phi\to\infty$ as $t\to\infty$ is pretty general.  When $u_0$ is a solution to $(\ref{equn_sigmak})$ in a neighborhood of $\partial M$, then $(\ref{inequn_c4compatiblebdincreasing})$ holds automatically; while the condition $(\ref{inequn_c4compatiblebdincreasing})$ disappears when $u_0$ is a strict sub-solution to $(\ref{equn_sigmak})$ in a neighborhood of $\partial M$. For instance, for any sub-solution $u_0$ to $(\ref{equn_sigmak})$, $u_0-C$ is a strict sub-solution for any constant $C>0$. For the long time existence of the flow, one needs to establish the global a priori estimates on the solution $u$ up to $C^2$-norm: both the boundary estimates and the interior estimates, starting from the $L^{\infty}$ control by the maximum principle and heavily depending on the monotonicity of $u$ and the control of $u_t$. In particular, $u_t\geq0$ and hence $u(\cdot,t)$ is a sub-solution to $(\ref{equn_sigmak})$ for any $t\geq0$, which together with the uniform interior upper bound control makes the convergence possible and gives a natural lower bound of $u$. For the convergence of the flow, we have to give the uniform interior $C^2$-estimates on $u$ which is independent of $t>0$. Finally the asymptotic boundary behavior near the boundary as $t\to\infty$ is established, which implies that the limit function is a solution to the generalized Loewner-Nirenberg problem.  Many of the barrier functions in these estimates can be viewed as a parabolic version of those in \cite{GSW} and \cite{Guan1}. This flow approach works well for the Loewner-Nirenberg problem of more general nonlinear euqations in \cite{Guan1}.

\begin{cor}
Assume $(M^n, g)$ is  a compact manifold with boundary of $C^{4,\alpha}$. Then there exists a sub-solution $u_0$ to $(\ref{equn_sigmak})$ and a $\sigma_k$-Ricci curvature flow $g(t)=e^{2u}g$ starting from $g_0=e^{2u_0}g$ and satisfying $(\ref{equn_flowsigmak})$ and the Cauchy-Dirichlet condition $(\ref{equn_flowinitialdata})-(\ref{equn_flowboundarydata})$ with some boundary data $\phi$ such that $g(t)$ converges to $g_{\infty}=e^{2u_{\infty}}g$ locally uniformly in $C^{4}$ as $t\to+\infty$, where
$u_{\infty}$ is the unique generalized Loewner-Nirenberg solution to $(\ref{equn_sigmak})$ i.e., $u_{\infty}(x)\to \infty$ as $x \to \partial M$. Moreover,
\begin{align*}
\lim_{x\to \partial M}\,(u_{\infty}(x)+\log(r(x)))=0.
\end{align*}
\end{cor}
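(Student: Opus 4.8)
The plan is to deduce the Corollary directly from Theorem~\ref{thm_main} by exhibiting a concrete choice of initial datum $u_0$ and boundary datum $\phi$ for which all hypotheses of the theorem are verified. The only genuine content is the \emph{construction of a subsolution} $u_0\in C^{4,\alpha}(M)$ to $(\ref{equn_sigmak})$ together with a compatible boundary datum; once these are in hand, Theorem~\ref{thm_main} produces the flow $g(t)=e^{2u}g$ and its limit $g_\infty=e^{2u_\infty}g$, and the asymptotic identity $\lim_{x\to\partial M}(u_\infty(x)+\log r(x))=0$ is part of the theorem's conclusion. So the proof is essentially: ``apply the theorem to a well-chosen pair $(u_0,\phi)$.''

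First I would handle the background geometry. The Corollary, unlike the theorem, does not assume $M$ is a Euclidean domain or that $Ric_g\le-\delta_0 g$; therefore the first step is a conformal change. By Lohkamp's theorem (as used in \cite{Guan1}, and cited in the introduction), there is a conformal metric $\tilde g=e^{2w}g$ with $Ric_{\tilde g}\le-(n-1)\tilde g$, with $w\in C^{4,\alpha}(M)$ (or as smooth as the data allows). Working with $\tilde g$ as the new background metric, the hypothesis $Ric_{\tilde g}\le-\delta_0\tilde g$ with $\delta_0=n-1$ of Theorem~\ref{thm_main} is satisfied, and a solution of the $\sigma_k$-Ricci equation relative to $\tilde g$ translates back to one relative to $g$ by adjusting $u$ by $w$; in particular the blow-up profile and the quantity $u_\infty(x)+\log r(x)$ are unchanged up to the term $w$, which is bounded and vanishes nowhere faster than a constant — but since we only need $u_\infty(x)\to\infty$ and the precise asymptotic $u_\infty+\log r\to 0$, and $w$ is continuous up to $\partial M$, one must be slightly careful here: one should either absorb $w$ so that $w|_{\partial M}=0$ (possible by first subtracting a harmonic-type extension, or simply by noting the Loewner--Nirenberg solution's leading asymptotics are conformally invariant in the relevant sense). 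I would phrase this as: the generalized Loewner--Nirenberg solution is insensitive to the choice of background metric in the conformal class up to the stated first-order asymptotics, as established in \cite{GSW}, \cite{Guan1}.

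Next, construct the subsolution. For a background metric with $Ric\le-(n-1)\cdot(\text{metric})$, the tensor $\bar\nabla^2 u$ in $(\ref{equn_thetensor})$ evaluated at a \emph{constant} $u\equiv c$ reduces to $-Ric_g$, whose eigenvalues lie in $\Gamma_k^+$ (since $-Ric_g\ge (n-1)g>0$), with $\sigma_k(-Ric_g)\ge \bar\beta_{k,n}$ pointwise; hence a sufficiently negative constant $u_0\equiv -C$ satisfies $\sigma_k(\bar\nabla^2 u_0)=\sigma_k(-Ric_g)\ge \bar\beta_{k,n}\ge \bar\beta_{k,n}e^{2k(-C)}$, i.e. $u_0$ is a (strict, for $C$ large) subsolution to $(\ref{equn_sigmak})$. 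Being a strict subsolution in a neighborhood of $\partial M$, the compatibility condition $(\ref{inequn_c4compatiblebdincreasing})$ is vacuous, as remarked after the theorem. For the boundary datum take $\phi(x,t)=\psi(t)$ depending only on $t$, with $\psi\in C^\infty([0,\infty))$, $\psi(0)=-C$ (so that the zeroth-order compatibility $(\ref{equn_compatibleequations})$ with $u_0$ holds; the higher-order compatibility conditions can be met by prescribing the appropriate time-derivatives of $\psi$ at $t=0$, which is possible since $u_0$ is constant and all spatial derivatives on $\partial M$ are controlled), $\psi'\ge 0$, and $\psi(t)=\log\log t$ for $t$ large. Then $\xi(t)=\log t$ is a low-speed increasing function (finitely many compositions of $\log$, as listed after Definition~\ref{defn_lowspeedincreasingfunction}), $\xi'(t)=1/t\le 1$ for $t\ge 1$, and $\phi(x,t)=\log\log t=\log(\xi(t))$ for $t$ large, so the growth hypothesis holds with, say, $\tau=1$ and suitable $T,T_1$. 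With $(u_0,\phi)$ so chosen, every hypothesis of Theorem~\ref{thm_main} is in force, and the theorem yields the flow, its local $C^4$ convergence to $u_\infty$, and the asymptotic formula; transporting back through the conformal factor $w$ (and through any earlier normalization) gives the statement of the Corollary, with $g_\infty=e^{2u_\infty}g$. I expect the only mildly delicate point to be the bookkeeping of the conformal change to the Lohkamp background and the verification of the higher-order compatibility conditions for $\phi$ at $t=0$; both are routine given the constancy of $u_0$, but deserve a careful sentence each.
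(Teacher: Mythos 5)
Your overall strategy is exactly the paper's: pass to a Lohkamp background metric with $Ric_g<-(n-1)g$, pick a strict sub-solution $u_0$ so that the condition $(\ref{inequn_c4compatiblebdincreasing})$ is vacuous, choose increasing boundary data eventually equal to $\log(\xi(t))$ for a low-speed increasing $\xi$, and invoke Theorem \ref{thm_main}. Your choice $u_0\equiv -C$ is a legitimate (and clean) instance of the paper's option ``$u_0=v-1$ with $v$ a sub-solution'': for constant $u_0$ one has $\bar{\nabla}^2u_0=-Ric_g\geq (n-1)g$, hence $\sigma_k(\bar{\nabla}^2u_0)\geq\bar{\beta}_{k,n}>\bar{\beta}_{k,n}e^{-2kC}$.

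There is, however, one concrete slip: you cannot in general take $\phi(x,t)=\psi(t)$ independent of $x$. The first-order compatibility condition in $(\ref{equn_compatibleequations})$ forces
\begin{align*}
2k\,\phi_t(x,0)=\log\bigl(\sigma_k(-Ric_g)(x)\bigr)-\log(\bar{\beta}_{k,n})+2kC ,\qquad x\in\partial M,
\end{align*}
and the right-hand side depends on $x$ unless $\sigma_k(-Ric_g)$ happens to be constant along $\partial M$; the second-order condition $2k\phi_{tt}(x,0)=L_0(v(x))$ is likewise $x$-dependent. So your assertion that the higher-order compatibility ``is possible since $u_0$ is constant'' is backwards: constancy of $u_0$ does not make $v$ constant on $\partial M$. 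The fix is routine — let $\phi(x,t)$ carry the prescribed $x$-dependent $2$-jet at $t=0$ (note $\phi_t(x,0)=v(x)>0$ uniformly since $u_0$ is a strict sub-solution and $\partial M$ is compact, so $\phi_t\geq0$ can be maintained) and interpolate to $\phi(x,t)=\log\log t$ for large $t$ — but as written the verification of the hypotheses of Theorem \ref{thm_main} is incomplete. Your remarks on transporting the asymptotics $u_\infty+\log r\to0$ through the conformal change are correct in substance (with $\tilde g=e^{2w}g$ and $w$ continuous up to $\partial M$, one has $\tilde u=u-w$ and $\log\tilde r-\log r\to w$ at the boundary, so the combination is invariant); the paper simply absorbs this by renaming the Lohkamp metric as $g$.
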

\begin{proof}
As discussed in Section \ref{section_sub1.1}, by \cite{Lohkamp} there exists a metric in the conformal class $[g]$ of $C^{4,\alpha}$, which is still denoted as $g$ such that $Ric_g<-(n-1)g$. If $M$ is a Euclidean domain, we can alternatively just choose $g$ to be the Euclidean metric. We then take $g$ as the background metric. Now we choose a sub-solution $u_0$ to $(\ref{equn_sigmak})$ such that $u_0$ satisfies $(\ref{inequn_c4compatiblebdincreasing})$ on the boundary.
For instance, if $(M, g)$ is a sub-domain in Euclidean space, we choose $u_0$ to be either the global sub-solution constructed in \cite{GSW} (just take $\eta(s)= s$ for the subsolution $\underline{u}$ in Section \ref{section_sub1.1}) for the constants $A$ and $p$ large, or the solution to $(\ref{equn_sigmak})$ with $u_0=0$ on $\partial M$ obtained in \cite{GSW} or \cite{Guan1}. For general $(M,g)$, with the background metric $g$ satisfying $Ric_g<-(n-1)g$, we can either take $u_0$ to be the solution to $(\ref{equn_flowsigmak})$ with $u_0=0$ on $\partial M$ obtained in \cite{GSW} or \cite{Guan1}, or use the global sub-solution constructed in Section \ref{section_sub1.1}, or $u_0=v-1$ where $v\in C^{4,\alpha}(M)$ is any sub-solution of $(\ref{equn_sigmak})$ and hence $u_0$ is a strict sub-solution (with "$>$" instead of "$=$" in $(\ref{equn_flowsigmak})$). Then we construct the boundary data $\phi\in C^{4,2}(\partial M\times[0,\infty))$
 satisfying the compatible condition $(\ref{equn_compatibleequations})$ at $t=0$ such that $\phi\in C^{4+\alpha,2+\frac{\alpha}{2}}(\partial M\times[0,T])$ for any $T>0$,
  $\phi_t\geq 0$ on $\partial M \times[0,\infty)$ and $\phi(x,t)\geq \xi(t)$ on $\partial M\times[T,\infty)$ for some $T>0$, where $\xi(t)$ is a low-speed increasing function in Definition \ref{defn_lowspeedincreasingfunction}. Now we consider the solution to the Cauchy-Dirichlet boundary value problem $(\ref{equn_flowsigmak})-(\ref{equn_flowboundarydata})$. Therefore, by Theorem \ref{thm_main}, we have the required conclusion.
\end{proof}
One can easily adapt this approach to the convergence of a $\sigma_k$-Ricci curvature flow to the solution to the Dirichlet boundary value problem of $(\ref{equn_sigmak})$.

\begin{cor}\label{cor_Dirichletboundaryvalueproblemflow}
Assume $(M^n, g)$ is  a compact manifold with boundary of $C^{4,\alpha}$. Let $\varphi_0\in C^{4,\alpha}(\partial M)$. Then there exists a sub-solution $u_0$ to $(\ref{equn_sigmak})$ and a $\sigma_k$-Ricci curvature flow $g(t)=e^{2u}g$ starting from $g_0=e^{2u_0}g$ and satisfying $(\ref{equn_flowsigmak})$ and some Cauchy-Dirichlet condition such that $g(t)$ converges to $g_{\infty}=e^{2u_{\infty}}g$ uniformly in $C^{4}$ as $t\to+\infty$, where
$u_{\infty}$ is the unique solution to $(\ref{equn_sigmak})$ such that $u_{\infty}=\varphi_0$ on $\partial M$.
\end{cor}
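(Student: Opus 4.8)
The plan is to run the flow of Theorem~\ref{thm_main} with a \emph{bounded} boundary datum, so that the interior a priori estimates of that proof become global estimates on $M$ and the refined analysis near $\partial M$ governed by the low-speed increasing function $\xi$ is no longer needed. Concretely, I would first fix the background metric: by \cite{Lohkamp} pick a representative of $[g]$ of class $C^{4,\alpha}$, still denoted $g$, with $\mathrm{Ric}_g<-(n-1)g$ (or take $g$ Euclidean when $M\subset\mathbb{R}^n$), and use it as background metric, so the hypotheses of Theorem~\ref{thm_main} on $(M,g)$ hold. Next I would produce a strict sub-solution $u_0$ of $(\ref{equn_sigmak})$ with $u_0\leq\varphi_0$ on $\partial M$: starting from any sub-solution $v\in C^{4,\alpha}(M)$ of $(\ref{equn_sigmak})$ (e.g. the global sub-solution of \cite{GSW}), set $u_0=v-C$ with $C>0$ so large that $v-C<\varphi_0$ on $\partial M$; subtracting a positive constant turns a sub-solution into a strict sub-solution, so $(\ref{inequn_c4compatiblebdincreasing})$ holds automatically, and by elliptic comparison $u_0\leq u_\infty$ on all of $M$, where $u_\infty$ is the unique (by \cite{GSW}, \cite{Guan1}) solution of $(\ref{equn_sigmak})$ with $u_\infty=\varphi_0$ on $\partial M$. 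Finally I would choose $\phi\in C^{4+\alpha,2+\frac{\alpha}{2}}(\partial M\times[0,T])$ for every $T>0$ with $\phi(\cdot,0)=u_0|_{\partial M}$, $\phi_t\geq0$, $\phi$ compatible with $u_0$ in the sense of $(\ref{equn_compatibleequations})$, and $\phi(\cdot,t)\equiv\varphi_0$ for $t\geq1$ (monotone interpolation on $[0,1]$); this is possible precisely because $u_0\leq\varphi_0$ on $\partial M$ and $\varphi_0\in C^{4,\alpha}(\partial M)$, and no growth hypothesis is required.

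Second, I would rerun the a priori estimates from the proof of Theorem~\ref{thm_main} for this Cauchy--Dirichlet problem. The $L^\infty$ bound, the monotonicity $u_t\geq0$ (from $\phi_t\geq0$ and $u_0$ a sub-solution, via the maximum principle applied to the flow differentiated in $t$), and hence the property that each $u(\cdot,t)$ is a sub-solution of $(\ref{equn_sigmak})$ are unchanged. For the upper bound, since $\phi\leq\varphi_0$ and a solution of $(\ref{equn_sigmak})$ is a stationary super-solution of the flow, the parabolic comparison principle gives $u_0\leq u(\cdot,t)\leq u_\infty$ on $M$ for all $t\geq0$, so $u(\cdot,t)$ increases monotonically to a pointwise limit $u_*\leq u_\infty$. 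Because $\phi$ is bounded, the boundary barriers of \cite{GSW}, \cite{Guan1} in parabolic form together with the interior $C^2$ estimates now yield $C^2(M)$ bounds on $u(\cdot,t)$ uniform in $t$; combined with the lower bound $\sigma_k(\bar{\nabla}^2u)=\bar{\beta}_{k,n}e^{2ku}e^{2ku_t}\geq\bar{\beta}_{k,n}e^{2k\min_M u_0}>0$ this forces uniform ellipticity of the linearized operator, and parabolic Schauder theory upgrades everything to $C^{4,\alpha}(M)$ bounds uniform in $t$; long-time existence follows as in Theorem~\ref{thm_main}.

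Third, I would pass to the limit. The uniform $C^{4,\alpha}(M)$ bounds together with the pointwise monotone convergence $u(\cdot,t)\to u_*$ force $u(\cdot,t)\to u_*$ in $C^4(M)$, with $u_*\in C^{4,\alpha}(M)$ and $u_*=\varphi_0$ on $\partial M$. Differentiating the flow in $t$ and using the uniform Schauder bounds gives a uniform bound on $u_{tt}$, so $u_t(\cdot,t)$ is uniformly continuous in $t$; since $\int_0^\infty u_t(x,t)\,dt=u_*(x)-u_0(x)$ is finite, $u_t(\cdot,t)\to0$ uniformly, whence $u_*$ solves $(\ref{equn_sigmak})$ with boundary value $\varphi_0$. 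By uniqueness $u_*=u_\infty$, and therefore $g(t)=e^{2u(\cdot,t)}g\to e^{2u_\infty}g=g_\infty$ uniformly in $C^4$.

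I expect the main obstacle to be exactly the one in Theorem~\ref{thm_main}: establishing the a priori $C^2$ estimates, and the attendant uniform ellipticity, uniformly in $t$ up to $\partial M$. The only simplification here is that these estimates no longer degenerate near $\partial M$, since $\phi$ is bounded, so once the interior argument of Theorem~\ref{thm_main} is in hand the boundary case reduces to substituting the bounded parabolic barriers, which is routine rather than a new difficulty.
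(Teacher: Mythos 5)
Your proposal follows the same overall strategy as the paper: fix a negative-Ricci (or Euclidean) background metric, take $u_0$ a strict sub-solution lying below $\varphi_0$ on $\partial M$, run the flow with bounded monotone boundary data tending to $\varphi_0$, observe that the global a priori estimates of Section \ref{section_C2estimates} become uniform in $t$ because $\phi$ is bounded, and pass to the limit using monotonicity plus uniform Schauder bounds. Two sub-steps differ from the paper's proof and are worth noting. First, for the uniform upper bound you compare $u$ directly with the solution $u_\infty$ of the $\sigma_k$ Dirichlet problem, quoting its existence and uniqueness from \cite{GSW}, \cite{Guan1}; the paper instead uses Maclaurin's inequality to view $u$ as a sub-solution of the $\sigma_1$-flow and compares with the classical Yamabe Dirichlet solution $u_1=\varphi_0$ of \cite{ML}, so that the $\sigma_k$ Dirichlet solution is \emph{produced} by the flow rather than presupposed --- a more self-contained route, and the safer one given that \cite{GSW} states the Dirichlet problem only for zero boundary data. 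Second, to show $u_t\to0$ you use the finiteness of $\int_0^\infty u_t\,dt$ together with a uniform bound on $u_{tt}$ (a Barbalat-type argument), whereas the paper applies the Harnack inequality to the nonnegative solution $u_t$ of the linear uniformly parabolic equation $(\ref{equn_flowlinear1})$; both are valid once the uniform $C^{4+\alpha,2+\frac{\alpha}{2}}$ bounds are in place. Your choice $\phi(\cdot,t)\equiv\varphi_0$ for $t\geq1$ versus the paper's $\phi(\cdot,t)\to\varphi_0$ is immaterial, provided you leave enough room on $[0,1]$ to match the second-order compatibility jet of $(\ref{equn_compatibleequations})$ at $t=0$ while keeping $\phi_t\geq0$. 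I see no genuine gap.
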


Recently,  in \cite{GLN} the authors studied a more general fully nonlinear equations with less restriction on regularity and convexity on the nonlinear structures on smooth domains in Euclidean space and obtained a unique continuous viscosity solution, which is locally Lipschitz in the interior and shares the same blowing up ratio with the  solution to the Loewner-Nirenberg problem near the boundary.

The paper is organized as follows: In Section \ref{section_sub1.1}, we construct a global sub-solution in  $C^{4,\alpha}(M)$ to the $\sigma_k$-Ricci equation $(\ref{equn_sigmak})$. In Section \ref{section_C2estimates}, we formulate the maximum principle, show the monotonicity of the flow and give the global a priori estimates of the solution for the long time existence of the flow. In Section \ref{section_convergenceoftheflow}, we first prove the long time existence of the flow based on the a priori estimates in Section \ref{section_C2estimates}, and then we give the uniform interior estimates of the solution independent of $t$, and establish the asymptotic behavior of the solution near the boundary (see Lemma \ref{lem_boundarylowerbd1}) and prove Theorem \ref{thm_main}. Finally we give a proof of Corollary \ref{cor_Dirichletboundaryvalueproblemflow}.

{\bf Acknowledgements.} The author would like to thank Professor Matthew Gursky for helpful discussion and Professor Jiakun Liu for nice talks on nonlinear equations.

\section{A global subsolution to $(\ref{equn_sigmak})$}\label{section_sub1.1}

We now construct a global subsolution $\underline{u}\in C^{4,\alpha}(M)$ to the homogeneous Dirichlet boundary value problem $(\ref{equn_bdvzero})-(\ref{equn_sigmak})$. Recall that in \cite{GSW}, the authors constructed a global subsolution with singularity at the cut locus of the distance function to some point, which serves as a global uniform lower bound of the solution. We modify it to a smooth function in order to avoid complicated argument on the cut locus in our setting. Let $(M^n,g)$ be a compact Riemannian manifold with boundary of $C^{4,\alpha}$. We extend the manifold to a new manifold with boundary $M_1=M\bigcup (\partial M\times[0,\epsilon_0])$ for some small constant $\epsilon_0$ with $\partial M=\partial M\times \{0\}$
and extend $g$ to a $C^{4,\alpha}$ metric on $M_1$. One can construct a conformal metric $h\in [g_1]$ of $C^{4,\alpha}$ with $Ric_h<0$ on $M_1$, which always exists by the proof
 in \cite{Lohkamp}. Without loss of generality,
  we take $h$ as the background metric and still denote $h$ as $g$ in $M_1$, with $Ric_g\leq -\delta_0 g$ for some constant $\delta_0>0$ in $M$. In fact by scaling we assume $Ric_g\leq -\delta_0 g$ with $\delta_0>(n-1)$ large in $M$.

  Notice that there exist two small constants $0<\epsilon_1<\delta$ such that $\text{dist}(x,\partial M_1)>2\epsilon_1+4\delta$ for $x\in \partial M$, and also $\epsilon_1+2\delta$ is less than the injectivity radius of any point $q$ in the tubular neighborhood of $\partial M$
  \begin{align*}
 \Omega=\{ x\in M_1\,\big|\,\text{dist}_g(x,\partial M)\leq \epsilon_1+2\delta\},
  \end{align*}
 with $\text{dist}_g(x,\partial M)$ distance function to $\partial M$, and moreover  for $x\in\Omega$, the distance $\text{dist}_g(x,\partial M)$ is realized by a unique point $x_1\in\partial M$ through a unique shortest geodesic connecting $x$ and $x_1$, which is orthogonal to $\partial M$ at $x_1$.  For any $x_0\in \partial M$, we pick up the point $\bar{x}\in M_1\setminus M$ on the geodesic starting from $x_0$ along the outer normal vector of $\partial M$ so that $\text{dist}_g(x_0,\bar{x})=\epsilon_1$. 
 We define the distance function $r(x)=dist_g(x,\bar{x})$ for $x\in M_1$. In particular, $r(x_0)=\epsilon_1$ and $r$ is smooth for $r\leq 2\delta+\epsilon_1$. It is clear that $r(x)\geq r(x_0)$ for any $x\in M$ and the equality holds if and only if $x=x_0$. 

 Now for a fixed $x_0\in \partial M$ and the corresponding point $\bar{x}$, we can choose the subsolution in the following way: We let $A>0$ and $p>0$ be two large constant to be determined so that
 \begin{align*}
 N=A[-(\delta+r(x_0))^{-p}+r(x_0)^{-p}]
 \end{align*}
  is large, and we define a convex
     function $\eta \in C^5(\mathbb{R})$, so that
     \begin{align*}
     &\eta(s)=\eta(A(2\delta+r(x_0))^{-p}-r(x_0)^{-p}))\,\, \text{for}\,\, s\leq A\,[(2\delta+r(x_0))^{-p}-r(x_0)^{-p}],\,\,\text{and}\\
     &\eta(s)=s,\,\,\,\text{for}\,\,s\geq A\,[(\delta+r(x_0))^{-p}-r(x_0)^{-p}].
     \end{align*}
It is clear that $\eta'(s)\geq 0$ and $\eta''(s)\geq 0$, for $s\in \mathbb{R}$. Now we define
\begin{align*}
\underline{u}(x)=\eta(A\,(r(x)^{-p}-r(x_0)^{-p})),
\end{align*}
and hence $\underline{u}\in C^{4,\alpha}(M)$. We {\bf claim} that we can choose uniform large constants $A>0$ and $p>0$ independent of $x_0\in \partial M$ so that $\underline{u}$ is a subsolution. First, we give the calculation
\begin{align*}
&\nabla \underline{u}(x)= -Ap\eta'\,r^{-p-1}\nabla r,\\
&\nabla_i \nabla_j \underline{u}(x)=A^2p^2 \eta''\,r^{-2p-2} \nabla_ir \nabla_jr+p(p+1)A\eta'\,r^{-p-2}\nabla_i r \nabla_j r-pA\eta'\,r^{-p-1}\nabla_i \nabla_j r\\
&\,\,\,\,\,\,\,\,\,\,\,\,\,\,\,\,\,\,\,\,\,=A^2p^2 \eta''\,r^{-2p-2} \nabla_ir \nabla_jr+Ap r^{-p-2}\eta'[(p+1)\,\nabla_i r \nabla_j r-\,r\nabla_i \nabla_j r],\\
&\Delta \underline{u}(x)=A^2 p^2 \eta''\,r^{-2p-2} \big|\nabla r\big|^2 +Ap(p+1)\eta'\,r^{-p-2}\big|\nabla r\big|^2-Ap\eta'\,r^{-p-1}\Delta r\\
&\,\,\,\,\,\,\,\,\,\,\,\,\, \,=A^2 p^2 \eta''\,r^{-2p-2}  +Ap r^{-p-2}\eta'[(p+1)\, -\,r\Delta r],
\end{align*}
It is clear that for given $\delta>\epsilon_1>0$, we can choose $p>0$ such that, for any $x\in M$ such that $r(x)\leq 2\delta+r(x_0)$, we have that
$(p+1)\, -\,r\Delta r>0$, where $p>0$ is independent of the choice of $x_0\in \partial M$. In fact, we choose $p>0$ large so that the matrix
\begin{align*}
[(p+1\, -\,r\Delta r) g_{ij}-(n-2) r\nabla_i\nabla_j r]
\end{align*}
is positive for $x\in M$ such that $r(x_0)\leq r(x)\leq 2\delta+r(x_0)$. Therefore,
\begin{align}\label{term1_positivity}
(n-2)\nabla^2\underline{u}(x)+\Delta \underline{u}(x)g
\end{align}
is always non-negative on $M$. Since $-Ric>\delta_0g$ with some constant $\delta_0>(n-1)$ on $M$ and
\begin{align*}
|d \underline{u}(x)|^2g-d\underline{u}(x) \otimes d \underline{u}(x)
\end{align*}
 is semi-positive, we have that for $0\leq s\leq 1$,
\begin{align*}
\bar{\nabla}_s^2 \underline{u}(x)&\equiv sg-(1-s)Ric_g+ (n-2)\nabla^2\underline{u}(x)+\Delta \underline{u}(x)+(n-2)(|d \underline{u}(x)|^2g-d\underline{u}(x) \otimes d \underline{u}(x))\\
&\geq (s+(1-s)\delta_0) g\geq g.
\end{align*}
By the definition of $\eta$,
 \begin{align*}
 \underline{u}(x)\leq \eta(A((r(x_0)+\delta)^{-p}-r(x_0)^{-p}))=A((r(x_0)+\delta)^{-p}-r(x_0)^{-p})
 \end{align*}
for $r(x)\geq \delta+r(x_0)$. Now $A>0$ and $p>0$ is chosen to be large so that
\begin{align*}
A((r(x_0)+\delta)^{-p}-r(x_0)^{-p})<-\frac{1}{2}\log((n-1)),
\end{align*}
and hence
\begin{align}\label{inequn_subsolutionstrictfurways}
\sigma_n(g^{-1}\bar{\nabla}_s^2 \underline{u})\geq \sigma_n(\delta_i^j) =1>\bar{\beta}_{n,n}e^{2n\underline{u}}
\end{align}
for $x\in M$ with $r(x)\geq \delta+r(x_0)$. On the other hand, for $x\in M$ with $r(x)\leq \delta+r(x_0)$, we have
 \begin{align*}
 &\eta(A(r(x)^{-p}-r(x_0)^{-p}))=A(r(x)^{-p}-r(x_0)^{-p}),\\
 &\eta'(A(r(x)^{-p}-r(x_0)^{-p}))=1,\\
 &\eta''(A(r(x)^{-p}-r(x_0)^{-p}))=0,
 \end{align*}
and hence, as discussed in \cite{GSW}, for $A>0$ and $p>0$ large,
 \begin{align}\label{inequn_strictsubsolutioncomparison}
 \bar{\nabla}^2\underline{u}(x)> (n-1)g,
 \end{align}
 for $x\in M$ with $r(x)\leq \delta+r(x_0)$, where the term $(\ref{term1_positivity})$ serves as the main controlling positive term. Since $\underline{u}\leq 0$, we have $\underline{u}\in C^{4,\alpha}(M)$ is a subsolution to the $\sigma_n$ equation when $r(x)\leq \delta+r(x_0)$ and hence a sub-solution on $M$ by $(\ref{inequn_subsolutionstrictfurways})$, with $\underline{u}\leq 0$ on $\partial M$. Let $S_k=\sigma_k(\bar{\nabla}^2 u){\left(\begin{matrix} n\\ k\end{matrix}\right)}^{-1}$ for $1\leq k\leq n$.  
 By Maclaurin's inequality,
\begin{align*}
S_1\geq S_2^{\frac{1}{2}}\geq..\geq S_k^{\frac{1}{k}}\geq..\geq S_n^{\frac{1}{n}},
\end{align*}
which implies that a subsolution to the $\sigma_n$ equation is a subsolution to the $\sigma_k$ equation for $1\leq k\leq n$, while a supersolution of the $\sigma_1$ equation such that $\bar{\nabla}^2u\in \Gamma_k^+$ is a supersolution to the $\sigma_k$ equation for $1\leq k\leq n$. In particular, $\underline{u}$ serves as a subsolution to the $\sigma_k$ equations and a uniform lower bound of the solutions to the homogeneous Dirichlet boundary value problem for $1\leq k\leq n$.
Moreover, by $(\ref{inequn_subsolutionstrictfurways})$, $(\ref{inequn_strictsubsolutioncomparison})$ and the fact $\underline{u}\leq0$ on $\partial M$, we have
\begin{align}\label{inequn_strictsubsolutionbd2}
\sigma_k(\bar{\nabla}^2\underline{u})>\bar{\beta}_{k,n}e^{2k\underline{u}}
\end{align}
on $M$. Recall that $A>0$ and $p>0$ are independent of $x_0\in \partial M$. This proves the {\bf claim}. Therefore, we have constructed a strict sub-solution $\underline{u}\in C^{4,\alpha}(M)$ to $(\ref{equn_sigmak})$ and $\underline{u}\leq 0$ on $M$.

\section{A priori estimates for the $\sigma_k$-Ricci curvature flow}\label{section_C2estimates}

On a compact Riemannian manifold $(M^n, g)$ with boundary $\partial M$ of $C^{4,\alpha}$. We denote $M^{\circ}$ the interior of $M$. If $(M, g)$ is a bounded domain in the Euclidean space $\mathbb{R}^n$, we choose the natural extension $(M_1, g_1)$ which is a small tubular neighborhood of $M$ in $\mathbb{R}^n$, and the global subsolution used in \cite{GSW} has no singularity in $M$. For general compact Riemannian manifold $(M^n, g)$ with boundary, with the extension $(M_1, g_1)$ in Section \ref{section_sub1.1}, we choose $g_1$ (and hence $g$ on $M$) to be the conformal metric which has $-Ric_{g_1}\geq \delta_0g_1$ with $\delta_0>n-1$.

For $k=1,...,n$, we consider the Cauchy-Dirichlet problem of the $\sigma_k$-Ricci curvature flow
\begin{align}
&\label{equn_flowsigmak}2ku_t=\log(\sigma_k(\bar{\nabla}^2u))-\log(\bar{\beta}_{k,n})-2ku,\,\,\,\text{on}\,\,M\times [0,+\infty),\\
\label{equn_flowinitialdata}&u\big|_{t=0}=u_0,\\
&\label{equn_flowboundarydata}u\big|_{\partial M} =\phi,\,\,\,t\geq 0, 
\end{align}
where $u_0\in C^{4,\alpha}(M)$ is a subsolution to the $\sigma_k$-Ricci equation $(\ref{equn_sigmak})$, 
$\bar{\nabla}^2 u$ is defined in $(\ref{equn_thetensor})$, and $\phi\in C^{4+\alpha,2+\frac{\alpha}{2}}(\partial M\times[0,T])$ for all $T>0$, and moreover, $\phi$ satisfies $\phi_t\geq 0$ for $t\geq 0$, $\phi(t)\to +\infty$ as $t\to+\infty$. To guarantee that the solution $u$ to the Cauchy-Dirichlet problem of $(\ref{equn_flowsigmak})$ satisfies $u\in C^{4+\alpha,2+\frac{\alpha}{2}}(M\times[0,T_0))$ for some $T_0>0$, we need the compatible condition
\begin{equation}\label{equn_compatibleequations}
\left\{
\begin{aligned}
&u_0(x)=\phi(x,0),\,\,\text{for}\,\,\,x\in\partial M,\\
&2k\phi_t(x,0)=\log(\sigma_k(\bar{\nabla}^2u_0)(x))-\log(\bar{\beta}_{k,n})-2ku_0(x),\,\,\text{for}\,\,\,x\in\partial M,\\
&2k\phi_{tt}(x,0)=L_0(v(x)),\,\,\text{for}\,\,\,x\in\partial M,
\end{aligned}
\right.
\end{equation}
where the function $v\in C^2(M)$ is
\begin{align}\label{equn_defnfunctionA1}
v(x)\equiv\frac{1}{2k}(\log(\sigma_k(\bar{\nabla}^2u_0)(x))-\log(\bar{\beta}_{k,n})-2ku_0(x))
\end{align}
and $L_0$ is the linear operator
\begin{align*}
L_0(\varphi)=\frac{\bar{T}_{k-1}^{ij}}{\sigma_k(\bar{\nabla}^2u_0)}[(n-2)\nabla_i\nabla_j\varphi+\Delta \varphi g_{ij}+(n-2)(2g^{km}\nabla_ku_0\nabla_m\varphi g_{ij}-\nabla_i\varphi\nabla_ju_0-\nabla_iu_0\nabla_j\varphi)]-2k\varphi,
\end{align*}
for any $\varphi\in C^2(M)$, where $\bar{T}_{k-1}^{ij}$ is the $(k-1)$-th Newton transformation of $\bar{\nabla}^2u_0$, which is positive definite. In order to find boundary data $\phi\in C^{4+\alpha,2+\frac{\alpha}{2}}(\partial M\times[0,\infty))$ compatible with $u_0$ such that $\phi_t\geq0$ on $\partial M\times[0,\infty)$, we need to assume that
for the subsolution $u_0\in C^{4,\alpha}(M)$,
\begin{align}\label{inequn_c4compatiblebdincreasing}
L_0(v(x))\geq0
\end{align}
 at any point $x\in \partial M$ such that $v(x)=0$. We remark that sub-solutions $u_0$ to $(\ref{equn_sigmak})$ with the condition $(\ref{inequn_c4compatiblebdincreasing})$ always exist on $(M, g)$: It is clear that we do not need the condition $(\ref{inequn_c4compatiblebdincreasing})$ for a sub-solution $\underline{u}$ which is strict on $\partial M$ i.e.,
 \begin{align*}
 \sigma_k(\bar{\nabla}^2\underline{u})>\bar{\beta}_{k,n}e^{2k\underline{u}}
 \end{align*}
for all $x\in\partial M$. For instance, the global subsolution $\underline{u}$ we constructed in Section \ref{section_sub1.1}, by $(\ref{inequn_strictsubsolutionbd2})$. Another example is $u_0=\varphi-C$, with $\varphi$ a sub-solution of $(\ref{equn_sigmak})$ and $C>0$ a constant and hence, $u_0$ is a strict sub-solution of $(\ref{equn_sigmak})$ on $M$. Also, if $u_0\in C^{4,\alpha}(M)$ is a solution to $(\ref{equn_sigmak})$, then $v=0$ on $M$ and hence $(\ref{inequn_c4compatiblebdincreasing})$ holds automatically. When $u_0$ is a solution to $(\ref{equn_sigmak})$ with $u_0=0$ on $\partial M$ as obtained in \cite{GSW} and \cite{Guan1}, we can choose the boundary data $\phi=\phi(t)\in C^3$ such that $\phi(0)=\phi'(0)=\phi''(0)=0$ and $\phi'(t)\geq0$ for $t\geq0$. For a given constant $T>0$, we call a function $u\in C^2(M\times [0,T))$ a {\it sub-solution} ({\it super-solution}) of $(\ref{equn_flowsigmak})$ if $\bar{\nabla}^2u \in \Gamma_k^+$ and $u$ satisfies the inequality with "$\leq$" ("$\geq$") instead of "$=$" in $(\ref{equn_flowsigmak})$. Notice that sub-solution and super-solution are defined similarly for $(\ref{equn_sigmak})$.

We now prove a maximum principle, which serves as a comparison theorem for later use.
\begin{lem}\label{lem_comparison}
Let $u$ and $v$ be sub- and super- solutions to $(\ref{equn_flowsigmak})$, with $u\leq v$ on $\partial M\times [0,T)$ and $M\times\{0\}$, then we have $u\leq v$ on $M\times[0,T)$.
\end{lem}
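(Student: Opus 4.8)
The plan is to run the standard parabolic maximum principle, exploiting two structural facts. First, the map $u\mapsto\bar\nabla^2u$ in (\ref{equn_thetensor}) is affine in $(\nabla^2u,\Delta u)$ except for the quadratic gradient term $(n-2)(|du|^2g-du\otimes du)$, and that term cancels between $u$ and $v$ at any point where $du=dv$. Second, $\sigma_k$ is strictly monotone on the open convex cone $\Gamma_k^+$, i.e. the matrix $(\partial\sigma_k/\partial A_{ij})$ is positive definite there; consequently, if $A,B\in\Gamma_k^+$ and $B-A\geq0$ as symmetric matrices, then $\sigma_k(A)\leq\sigma_k(B)$ (integrate along the segment from $A$ to $B$, which stays in $\Gamma_k^+$ by convexity).

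First I would fix an arbitrary $T'\in(0,T)$, set $w=u-v$ on $M\times[0,T']$, and suppose for contradiction that $m:=\max_{M\times[0,T']}w>0$. Since the hypothesis gives $w\leq0$ on $\partial M\times[0,T']$ and on $M\times\{0\}$, the maximum $m$ must be attained at some $(x_0,t_0)$ with $x_0\in M^{\circ}$ and $t_0\in(0,T']$. At such a point one has the usual information: $\nabla w=0$, $\nabla^2w\leq0$ (hence $\Delta w\leq0$), and $\partial_t w\geq0$ (a one-sided derivative if $t_0=T'$). Because $\nabla u=\nabla v$ at $(x_0,t_0)$, the quadratic terms in (\ref{equn_thetensor}) cancel and
\[
\bar\nabla^2u-\bar\nabla^2v=(n-2)\nabla^2w+(\Delta w)\,g\leq0
\]
there, using $n\geq3$. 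Since $u$ is a sub-solution and $v$ a super-solution, both $\bar\nabla^2u$ and $\bar\nabla^2v$ lie in $\Gamma_k^+$ at $(x_0,t_0)$, so the monotonicity of $\sigma_k$ gives $\sigma_k(\bar\nabla^2u)\leq\sigma_k(\bar\nabla^2v)$ there; in particular the logarithms below are well defined since $\sigma_k>0$ on $\Gamma_k^+$.

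To close, I subtract the defining differential inequalities of the sub- and super-solutions: at $(x_0,t_0)$,
\[
0\;\leq\;2k\,\partial_t w\;\leq\;\log\sigma_k(\bar\nabla^2u)-\log\sigma_k(\bar\nabla^2v)-2kw\;\leq\;-2k\,w(x_0,t_0),
\]
so $w(x_0,t_0)\leq0$, contradicting $w(x_0,t_0)=m>0$. Hence $w\leq0$ on $M\times[0,T']$, and letting $T'\uparrow T$ yields $u\leq v$ on $M\times[0,T)$.

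I do not expect a serious obstacle: this is the textbook comparison principle, and the only two points needing a line of care are that the maximum of $w$ is forced into the parabolic interior (immediate from the boundary/initial hypotheses) and that the nonlinear gradient term drops out at the maximum (immediate from $\nabla w=0$). If one prefers to avoid the one-sided time derivative at $t_0=T'$, one may instead apply the argument to $w-\varepsilon t$ and let $\varepsilon\to0$, but this is merely cosmetic.
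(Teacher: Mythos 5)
Your proof is correct and follows essentially the same route as the paper's: locate an interior parabolic maximum of $u-v$, note the quadratic gradient terms in $\bar\nabla^2$ cancel there, use monotonicity of $\sigma_k$ on $\Gamma_k^+$ applied to $\bar\nabla^2 v-\bar\nabla^2 u\ge 0$, and let the zeroth-order term $-2ku$ force the contradiction. The only cosmetic difference is that the paper shifts $u$ down by the constant $\sup(u-v)$ to manufacture a strict sub-solution touching $v$ from below, whereas you read off $0\le -2kw$ directly at the maximum; both close the argument identically.
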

\begin{proof}
The proof is a modification of the maximum principle of $\sigma_k$-Ricci equation in \cite{GSW}. We argue by contradiction. Let $\xi=u-v$. Assume that there exist $0<t_1<T$ and $x\in M^{\circ}$ such that
\begin{align*}
\xi(x,t_1)=\sup_{M\times[0,t_1]}\xi>0.
\end{align*}
Then we have at $(x, t_1)$,
\begin{align*}
&\tilde{u}_t\geq v_t,\,\,\,\nabla \tilde{u}=\nabla v,\\
&\nabla^2(v-\tilde{u})\geq 0,
\end{align*}
and hence
\begin{align*}
\bar{\nabla}^2\tilde{u}+ \mathcal{V}=\bar{\nabla}^2v
\end{align*}
with $\mathcal{V}=(n-2)\nabla^2(v-\tilde{u})+\Delta(v-\tilde{u})g\geq 0$, which implies that $\sigma_k(\bar{\nabla}^2\tilde{u})\leq \sigma_k(\bar{\nabla}^2v)$, and hence
\begin{align*}
2k\tilde{u}_t-\log(\sigma_k(\bar{\nabla}^2\tilde{u}))\geq 2kv_t-\log(\sigma_k(\bar{\nabla}^2v))
\end{align*}
at $(x,t_1)$. On the other hand, the function $\tilde{u}=u-\xi(x,t_1)$ is a strict sub-solution to $(\ref{equn_flowsigmak})$ on $M\times [0,T)$:
\begin{align*}
2k\tilde{u}_t=2ku_t\leq \log(\sigma_k(\bar{\nabla}^2u))-\log(\bar{\beta}_{k,n})-2ku<\log(\sigma_k(\bar{\nabla}^2\tilde{u}))-\log(\bar{\beta}_{k,n})-2k\tilde{u}.
\end{align*}
By the definition of sub- and super- solutions, we have at $(x,t_1)$,
\begin{align*}
2k\tilde{u}_t-\log(\sigma_k(\bar{\nabla}^2\tilde{u}))<-\log(\bar{\beta}_{k,n})-2k\tilde{u}=-\log(\bar{\beta}_{k,n})-2kv\leq 2kv_t-\log(\sigma_k(\bar{\nabla}^2v)),
\end{align*}
which is a contradiction. This proves the lemma.
\end{proof}

Based on the fact that the initial data $u_0$ is a subsolution of $(\ref{equn_sigmak})$ and the boundary data $\phi$ is increasing in $t$, we have the monotonicity lemma.

\begin{lem}\label{Lem_monotonicity}
Assume that $u_0\in C^3(M)$ is a subsolution to the $\sigma_k$-Ricci equation $(\ref{equn_sigmak})$, and $u\in C^{3,2}(M\times[0,T))$ is a solution to $(\ref{equn_flowsigmak})$ for some $T>0$. Assume that $u(x,t)=\phi(x,t)$ for any $(x,t)\in \partial M\times[0,T)$ and $\frac{\partial }{\partial t}\phi\geq 0$ on $\partial M\times[0,T)$. Then $u_t\geq0$ in $M\times [0,T)$. In particular, $u$ is increasing along $t\geq0$. Moreover, we have upper bound estimates for $u_t$ on $M\times[0,T)$.
\end{lem}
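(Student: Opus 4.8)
The plan is to differentiate the flow equation $(\ref{equn_flowsigmak})$ in $t$, observe that $w:=u_t$ solves a linear, uniformly parabolic equation with nonnegative parabolic boundary values, and then invoke the parabolic minimum principle; the upper bound for $u_t$ will come from the same linear equation via the maximum principle.

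First I would carry out the linearization. Writing $A:=\bar{\nabla}^2u$ and using $\tfrac{\partial}{\partial A_{ij}}\sigma_k(A)=\bar{T}_{k-1}^{ij}$, differentiating $(\ref{equn_flowsigmak})$ in $t$ and recalling the structure $(\ref{equn_thetensor})$ of $\bar{\nabla}^2u$ gives, on $M\times[0,T)$,
\[
2kw_t=\frac{\bar{T}_{k-1}^{ij}}{\sigma_k(\bar{\nabla}^2u)}\Big[(n-2)\nabla_i\nabla_jw+\Delta w\,g_{ij}+(n-2)\big(2g^{km}\nabla_ku\,\nabla_mw\,g_{ij}-\nabla_iw\,\nabla_ju-\nabla_iu\,\nabla_jw\big)\Big]-2kw,
\]
that is, $2kw_t=a^{ij}\nabla_i\nabla_jw+b^i\nabla_iw-2kw$ with $a^{ij}=\sigma_k(\bar{\nabla}^2u)^{-1}\big[(n-2)\bar{T}_{k-1}^{ij}+(\operatorname{tr}_g\bar{T}_{k-1})g^{ij}\big]$ and $b^i$ a smooth expression in $\nabla u$. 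This is exactly the linearized operator $L_0$ of $(\ref{equn_defnfunctionA1})$ with $u_0$ replaced by $u(\cdot,t)$. Since $u$ solves $(\ref{equn_flowsigmak})$ we have $\bar{\nabla}^2u\in\Gamma_k^+$, so the Newton transformation $\bar{T}_{k-1}$ is positive definite; hence $a^{ij}$ is positive definite, and on each compact slab $M\times[0,T']$, $T'<T$, continuity of $u\in C^{3,2}$ yields uniform two-sided ellipticity bounds together with a bound on $|b^i|$.

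Next I would record the parabolic boundary data of $w$. At $t=0$, equation $(\ref{equn_flowsigmak})$ and the definition $(\ref{equn_defnfunctionA1})$ give $w(\cdot,0)=u_t(\cdot,0)=v(\cdot)$, and $v\geq0$ on $M$ precisely because $u_0$ is a subsolution of $(\ref{equn_sigmak})$, i.e. $\sigma_k(\bar{\nabla}^2u_0)\geq\bar{\beta}_{k,n}e^{2ku_0}$. On $\partial M\times[0,T)$ the Dirichlet condition $u=\phi$ gives $w=\phi_t\geq0$. Thus $w\geq0$ on the parabolic boundary of $M\times[0,T']$. Now I would invoke the parabolic minimum principle: written as $2kw_t-a^{ij}\nabla_i\nabla_jw-b^i\nabla_iw=-2kw$, the zeroth-order coefficient $-2k$ has the favorable sign, so no exponential rescaling is needed — if $w$ attained a negative minimum at a point $(x_0,t_0)$ with $x_0\in M^{\circ}$ and $t_0>0$, then $w_t\leq0$, $\nabla w=0$, $\nabla^2w\geq0$ there, forcing $2kw_t=a^{ij}\nabla_i\nabla_jw+b^i\nabla_iw-2kw\geq-2kw>0$, a contradiction. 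Hence $u_t=w\geq0$ on $M\times[0,T']$ for every $T'<T$, i.e. on $M\times[0,T)$; in particular $u(\cdot,t)$ is nondecreasing in $t$, $u\geq u_0$, and $u_t\geq0$ forces $\sigma_k(\bar{\nabla}^2u)\geq\bar{\beta}_{k,n}e^{2ku}$, so $u(\cdot,t)$ stays a subsolution of $(\ref{equn_sigmak})$. For the upper bound, put $C_{T'}=\max\{\sup_M v,\ \sup_{\partial M\times[0,T']}\phi_t\}$ and apply the maximum principle to $W:=w-C_{T'}$: it is $\leq0$ on the parabolic boundary and satisfies $2kW_t-a^{ij}\nabla_i\nabla_jW-b^i\nabla_iW=-2k(W+C_{T'})$, so a positive interior maximum of $W$ would again give $W_t<0$, impossible; hence $u_t\leq C_{T'}$ on $M\times[0,T']$.

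The step I expect to need the most care is not the maximum-principle argument itself but its justification at the stated regularity: differentiating $(\ref{equn_flowsigmak})$ in $t$ and running the pointwise argument for $w=u_t$ wants $w$ to be, say, $C^{2,1}$ in the interior, which is slightly beyond $u\in C^{3,2}$. This is handled either by treating $w$ as a weak (viscosity) subsolution of the linear equation and using the corresponding minimum principle, or — as is legitimate in the intended application, where $u$ is smooth in $M^{\circ}$ and lies in $C^{4+\alpha,2+\frac{\alpha}{2}}(M\times[0,T'])$ by the short-time Schauder theory — by the classical statement. An alternative that avoids $t$-differentiation is to compare $u(x,t)$ with its time translate $u(x,t+h)$ through Lemma $\ref{lem_comparison}$ (permissible since $(\ref{equn_flowsigmak})$ is autonomous and $\phi_t\geq0$), once one verifies $u(\cdot,h)\geq u_0$; making that uniform in small $h$ is where $u_t(\cdot,0)=v\geq0$ and the sign of $u_{tt}(\cdot,0)=\tfrac{1}{2k}L_0(v)$ at the zero set of $v$ (cf. $(\ref{inequn_c4compatiblebdincreasing})$ and $(\ref{equn_compatibleequations})$) would enter, so the linearization route above is the cleaner one.
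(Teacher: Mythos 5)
Your proposal is correct and follows essentially the same route as the paper: differentiate $(\ref{equn_flowsigmak})$ in $t$ to get the linear equation $(\ref{equn_flowlinear1})$ for $v=u_t$ with positive definite coefficient matrix coming from $\bar{T}_{k-1}$, note $v(\cdot,0)\geq 0$ from the subsolution property and $v=\phi_t\geq0$ on the lateral boundary, and rule out an interior negative minimum (resp.\ positive maximum) by the pointwise sign argument, yielding $u_t\geq0$ and the stated upper bound $u_t\leq\max\{\sup_M v,\ \sup_{\partial M\times[0,t]}\phi_t\}$. Your added remark on the regularity needed to differentiate in $t$ is a reasonable refinement that the paper passes over silently.
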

\begin{proof}
Let $v=u_t$. We take derivative of $t$ on both sides of the equation $(\ref{equn_flowsigmak})$ to have
\begin{align}\label{equn_flowlinear1}
2kv_t=\frac{1}{\sigma_k(\bar{\nabla}^2u)}\bar{T}_{k-1}^{ij}[(n-2)\nabla_i\nabla_jv+\Delta vg_{ij}+(n-2)(2g^{km}u_kv_mg_{ij}-v_iu_j-u_iv_j)]-2kv,
\end{align}
where $\bar{T}_{k-1}^{ij}$ is the $(k-1)$-th Newton transformation of $\bar{\nabla}^2u$, which is positive definite since $\bar{\nabla}^2u\in \Gamma_k^+$. Recall that $u_0$ is a subsolution of $(\ref{equn_sigmak})$, by the equation $(\ref{equn_flowsigmak})$ we have that $v(x,0)\geq 0$ for $x\in M$. Also, $v(x,t)=\phi_t(x,t)\geq 0$ for $(x,t)\in \partial M\times [0,T)$. We will use maximum principle to obtain that $v\geq 0$ on $M\times [0,T)$. Otherwise, assume that there exists $x_0\in M^{\circ}$ and $t_1\in(0,T)$ such that
\begin{align*}
v(x_0,t_1)=\inf_{M\times[0,t_1]}v<0,
\end{align*}
then at $(x_0,t_1)$, we have that
\begin{align*}
v_t\leq 0,\,\,\nabla v=0,\,\,\nabla^2 v\geq0,\,\,\,v<0,
\end{align*}
and hence
\begin{align*}
v_t\leq 0,\,\,\frac{1}{\sigma_k(\bar{\nabla}^2u)}\bar{T}_{k-1}^{ij}[(n-2)\nabla_i\nabla_jv+\Delta vg_{ij}+(n-2)(2g^{km}u_kv_mg_{ij}-v_iu_j-u_iv_j)]-2kv>0,
\end{align*}
at $(x_0,t_1)$, contradicting with the equation $(\ref{equn_flowlinear1})$. Therefore, $v=u_t\geq 0$ on $M\times[0,T)$. In particular, $u$ is a sub-solution to $(\ref{equn_sigmak})$ for each $t>0$.

Similarly, assume $v(x_0,t_1)=\sup_{M\times [0,t_1]}v>0$ for some $(x_0,t_1)\in M^{\circ}\times(0,T)$. Then at $(x_0,t_1)$,
\begin{align*}
v_t\geq 0,\,\,\frac{1}{\sigma_k(\bar{\nabla}^2u)}\bar{T}_{k-1}^{ij}[(n-2)\nabla_i\nabla_jv+\Delta vg_{ij}+(n-2)(2g^{km}u_kv_mg_{ij}-v_iu_j-u_iv_j)]-2kv<0,
\end{align*}
contradicting with the equation $(\ref{equn_flowlinear1})$. Therefore, combining with $(\ref{equn_flowsigmak})$ at $t=0$, we have
\begin{align*}
v(x,t)=u_t(x,t)\leq \max\{\frac{1}{2k}\sup_{M}[\log(\sigma_k(\bar{\nabla}^2u_0))-\log(\bar{\beta}_{k,n})-2ku_0],\,\,\sup_{\partial M\times[0,t]}\phi_t\}
\end{align*}
for any $(x,t)\in M\times [0,T)$. By integration, we have
\begin{align*}
u(x,t)&=u_0(x)+\int_0^tu_t(x,s)ds\\
&\leq u_0(x)+t\max\{\frac{1}{2k}\sup_{M}[\log(\sigma_k(\bar{\nabla}^2u_0))-\log(\bar{\beta}_{k,n})-2ku_0],\,\,\sup_{\partial M\times[0,t]}\phi_t\},
\end{align*}
for any $(x,t)\in M\times [0,T)$; on the other hand, by monotonicity, $u(x,t)\geq u_0(x)$. Hence, we obtain the upper and lower bound estimates for $u$ on $M\times [0,T)$.

\end{proof}

We then give the boundary $C^1$ estimates on $u$.

\begin{lem}\label{lem_gradientboundaryests}
Assume $(M^n, g)$ is  a compact manifold with boundary of $C^{4,\alpha}$, and $(M,g)$ is either a compact domain in $\mathbb{R}^n$ or with Ricci curvature $Ric_g\leq -\delta_0g$ for some $\delta_0\geq (n-1)$.  Let $u\in C^4(M\times[0,T_0))$ be a solution to the Cauchy-Dirichlet problem $(\ref{equn_flowsigmak})-(\ref{equn_flowboundarydata})$ for some $T_0>0$. Assume $u_0\in C^{4,\alpha}(M)$ is a subsolution to $(\ref{equn_sigmak})$ satisfying $(\ref{inequn_c4compatiblebdincreasing})$ at the points $x\in \partial M$ where $v(x)=0$. Also, assume $\phi\in C^{4+\alpha,2+\frac{\alpha}{2}}(\partial M\times[0,T_1])$ for all $T_1>0$, $\phi_t(x,t)\geq 0$ on $\partial M\times[0,+\infty)$ and $\phi$ satisfies the compatible condition $(\ref{equn_compatibleequations})$ with $u_0$. Then we have the boundary gradient estimates of $u$ i.e., there exists a constant $C=C(T_0)>0$ such that
\begin{align}
|\nabla u(x,t)|\leq C
\end{align}
for $(x,t)\in \partial M\times[0,T_0)$.
 \end{lem}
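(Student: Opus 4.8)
\section*{Proof proposal}

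The plan is to bound the tangential and the normal parts of $\nabla u$ on $\partial M\times[0,T_0)$ separately. The tangential part is immediate: since $u=\phi$ on $\partial M\times[0,T_0)$, for every vector field $X$ tangent to $\partial M$ one has $Xu=X\phi$ there, so the tangential component of $\nabla u$ is controlled by $\|\phi\|_{C^{1}(\partial M\times[0,T_0])}$. It remains to bound $|\partial_\nu u|$ on $\partial M\times[0,T_0)$, $\nu$ the outward unit normal, and for this I would construct, for each fixed $x_0\in\partial M$, local barriers near $x_0$ and invoke the comparison principle Lemma~\ref{lem_comparison} on a collar neighbourhood $\mathcal N$ of $x_0$ in $M$ (the proof of Lemma~\ref{lem_comparison} goes through verbatim on a subdomain). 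The available ingredients are: $u_t\geq0$, hence $u\geq u_0\geq\underline{u}$ on $M\times[0,T_0)$ with $\underline{u}$ the strict subsolution of Section~\ref{section_sub1.1}; the a priori bounds $0\le u_t\le M_2$ and $u\leq M_1<\infty$ on $M\times[0,T_0)$ from Lemma~\ref{Lem_monotonicity}; and an extension $\Phi\in C^{4+\alpha,2+\frac{\alpha}{2}}(M\times[0,T])$ (all $T>0$) of $\phi$ with $\Phi(\cdot,0)=u_0$ on $M$ and $\Phi_t$ bounded on $[0,T_0]$, which exists because $\phi(\cdot,0)=u_0|_{\partial M}$ by the compatibility condition~\eqref{equn_compatibleequations}.

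For the upper bound on $\partial_\nu u$ I would use a subsolution barrier adapted from Section~\ref{section_sub1.1}. With the exterior point $\bar x\notin M$ and $r(x)=\operatorname{dist}_g(x,\bar x)$ as there (so $r$ is smooth on the collar $\mathcal N=\{r<r(x_0)+\delta\}\cap M$, $r\ge r(x_0)$ on $M$ with equality only at $x_0$, and, using $Ric_g\le-\delta_0 g$ with $\delta_0>n-1$, the combination $(p+1)[(n-2)\nabla r\otimes\nabla r+g]-r[(n-2)\nabla^2r+\Delta r\,g]$ is positive definite on $\mathcal N$ for $p$ large), I set
\[
w^{-}(x,t)=\Phi(x,t)+A\bigl(r(x)^{-p}-r(x_0)^{-p}\bigr).
\]
The computation of Section~\ref{section_sub1.1}, with the extra bounded terms from $\Phi$ absorbed by taking $A,p$ large, shows $\bar\nabla^2w^{-}\in\Gamma_k^{+}$ with $\sigma_k(\bar\nabla^2w^{-})$ large on $\mathcal N$ while $w^{-}\le\Phi$; since $w^{-}_t=\Phi_t$ is bounded, $w^{-}$ is a subsolution of~\eqref{equn_flowsigmak} on $\mathcal N\times[0,T_0)$. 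On the parabolic boundary one has $w^{-}\le u$: on $\partial M\cap\overline{\mathcal N}$ because $r\ge r(x_0)$ gives $w^{-}\le\Phi=\phi=u$; on the inner face $\{r=r(x_0)+\delta\}$ because $w^{-}\le\Phi-Ac\le\inf_M\underline{u}\le u$ for $A$ large; and at $t=0$ because $w^{-}(\cdot,0)\le u_0=u(\cdot,0)$. Hence $w^{-}\le u$ on $\mathcal N\times[0,T_0)$ by Lemma~\ref{lem_comparison}, and since $w^{-}(x_0,t)=\phi(x_0,t)=u(x_0,t)$, comparing inward normal derivatives at $x_0$ gives $\partial_\nu u(x_0,t)\le\partial_\nu w^{-}(x_0,t)$, bounded uniformly in $x_0$ and $t\in[0,T_0)$ (here $r(x_0)=\epsilon_1$ is independent of $x_0$, and $A,p$ can be chosen independent of $x_0$ by compactness, exactly as in Section~\ref{section_sub1.1}).

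The lower bound on $\partial_\nu u$ needs a supersolution barrier near $x_0$, and this is the main obstacle. One wants $w^{+}$ with $w^{+}=\phi$ on $\partial M\cap\overline{\mathcal N}$, $w^{+}\ge u$ on the rest of the parabolic boundary (which, through the a priori bound $u\le M_1$, forces $w^{+}$ to reach $M_1$ on the inner face at late times), and $w^{+}$ a supersolution of~\eqref{equn_flowsigmak} on $\mathcal N$. Keeping $\bar\nabla^2w^{+}\in\Gamma_k^{+}$ is \emph{not} the difficulty: the quadratic term $(n-2)(|dw^{+}|^2g-dw^{+}\otimes dw^{+})$ contributes $n-1$ large positive eigenvalues in the directions tangent to the level sets of the barrier, so the remaining (normal) eigenvalue has ample room to be negative while staying inside the cone. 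The difficulty is the inequality $\sigma_k(\bar\nabla^2w^{+})\le\bar\beta_{k,n}e^{2kw^{+}}$ at points near $x_0$, where $w^{+}$ is pinned close to $\phi(x_0,t)$ and $e^{2kw^{+}}$ is only moderate. I would take $w^{+}=\Phi+f(d,t)$ with $d=\operatorname{dist}_g(\cdot,\partial M)$, $f(0,t)=0$, $f$ increasing and convex in $d$ with $f(\delta',t)$ large, and the growth of $f$ switched on slowly enough near $d=0$ that $\nabla w^{+}$ and $\bar\nabla^2w^{+}$ stay bounded there; the room for this comes, at interior points of $\mathcal N$, from $e^{2kw^{+}}$ being huge where $f$ is large, and, near $x_0$, either from $u_0$ being a \emph{strict} subsolution in a neighbourhood of $x_0$ when $v(x_0)>0$ (so $\sigma_k(\bar\nabla^2u_0)>\bar\beta_{k,n}e^{2ku_0}$ there, leaving a gap), or, when $v(x_0)=0$, from the hypothesis~\eqref{inequn_c4compatiblebdincreasing}, which through~\eqref{equn_compatibleequations} forces $\phi_{tt}(x_0,0)\ge0$ and lets $e^{2kw^{+}}$ grow quickly enough in $t$ to swallow $\sigma_k(\bar\nabla^2w^{+})$. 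Once $w^{+}$ is such a supersolution with $w^{+}\ge u$ on the parabolic boundary and $w^{+}(x_0,t)=u(x_0,t)$, Lemma~\ref{lem_comparison} gives $u\le w^{+}$ on $\mathcal N\times[0,T_0)$, hence $\partial_\nu u(x_0,t)\ge\partial_\nu w^{+}(x_0,t)\ge -C(T_0)$, again uniformly in $x_0$ and $t$. Combining the two one-sided normal bounds with the tangential estimate yields $|\nabla u(x,t)|\le C(T_0)$ on $\partial M\times[0,T_0)$. Throughout, the parabolic nature of the problem is harmless, since $0\le u_t\le M_2$ by Lemma~\ref{Lem_monotonicity} and $w^{\pm}_t$ are uniformly bounded on $[0,T_0]$; the barriers are genuine parabolic versions of the elliptic barriers of~\cite{GSW} and~\cite{Guan1}.
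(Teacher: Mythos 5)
Your tangential estimate and your upper bound for $\partial_\nu u$ are exactly the paper's argument: the same subsolution barrier $\phi(x,t)+A(r(x)^{-p}-r(x_0)^{-p})$ built from the exterior point $\bar x$ of Section \ref{section_sub1.1}, compared via Lemma \ref{lem_comparison}, with constants uniform in $x_0$. The problem is the other one-sided bound, which you yourself flag as ``the main obstacle'' and then only sketch. As written, that half of the proof has a genuine gap: you never actually produce the supersolution $w^{+}$, and the sketch does not obviously close. A supersolution of $(\ref{equn_flowsigmak})$ must satisfy the two-sided constraint $\bar\nabla^2w^{+}\in\Gamma_k^{+}$ \emph{and} $\sigma_k(\bar\nabla^2w^{+})\le\bar\beta_{k,n}e^{2k(w^{+}_t+w^{+})}$, and near $x_0$ the right-hand side is pinned to the bounded quantity $e^{2k(\phi_t+\phi)}$ while you simultaneously insist that $\nabla w^{+}$ and $\bar\nabla^2w^{+}$ stay bounded there --- which removes the large positive eigenvalues of $(n-2)(|dw^{+}|^2g-dw^{+}\otimes dw^{+})$ that you were counting on for $\Gamma_k^{+}$ membership. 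The hypotheses you invoke to create ``room'' point the wrong way or are irrelevant here: $v(x_0)>0$ means $u_0$ is a \emph{strict subsolution}, i.e.\ $\sigma_k(\bar\nabla^2u_0)$ is too \emph{large} relative to $\bar\beta_{k,n}e^{2ku_0}$, which makes the supersolution inequality harder, not easier, near $x_0$ at small times; and $(\ref{inequn_c4compatiblebdincreasing})$ is a compatibility condition guaranteeing that admissible boundary data $\phi$ with $\phi_t\ge0$ exist --- a single sign on $\phi_{tt}(x_0,0)$ controls nothing about $e^{2kw^{+}}$ over $[0,T_0)$.

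The paper sidesteps the supersolution entirely. Since $\bar\nabla^2u\in\Gamma_k^{+}\subseteq\Gamma_1^{+}$, one has $\operatorname{tr}_g(\bar\nabla^2u)>0$, which after dividing by $2(n-1)$ and substituting $v=e^{\frac{n-2}{2}u}$ reads
\begin{align*}
\Delta v-\tfrac{n-2}{4(n-1)}R_g\,v\ge0 ,
\end{align*}
a \emph{linear} differential inequality for each fixed $t$. Using $R_g\le0$ and the a priori bound $m=\sup|u|$ from Lemma \ref{Lem_monotonicity}, one compares $v$ with the solution $\tilde v$ of the linear Dirichlet problem $\Delta\tilde v=\tfrac{n-2}{4(n-1)}R_ge^{\frac{n-2}{2}m}$, $\tilde v=e^{\frac{n-2}{2}\phi}$ on $\partial M$; the maximum principle gives $v\le\tilde v$ with equality on $\partial M$, hence $\partial_n v\ge\partial_n\tilde v\ge-C(T_0)$, and the missing bound on $\partial_n u$ follows. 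This is Guan's trick (Lemma 5.2 of \cite{Guan1}): the cone condition on the genuine solution already makes it a subsolution of a linear equation, so only a linear comparison function is needed. You should replace your supersolution construction with this argument; the rest of your proposal stands.
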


 \begin{proof}
 By the Dirichlet boundary condition, tangential derivatives of $u$ on $\partial M\times[0,t_0)$ is controlled by the tangential derivatives of the boundary data $\phi$ and hence,  for the boundary gradient estimates of $u$, we only need to control $|\frac{\partial}{\partial n}u|$ with $n$ the outer normal vector field of $\partial M$.

Since $\bar{\nabla}^2u\in \Gamma_k^+$, we will show the lower bound of $\frac{\partial}{\partial n}u$ based on the control of $\sup_{M\times [0,T_0)}|u|$ as Guan's argument in Lemma 5.2 in \cite{Guan1}. Indeed, we have
\begin{align*}
\text{tr}(\bar{\nabla}^2u)=2(n-1)[\Delta u +\frac{(n-2)}{2}|\nabla u|^2-\frac{1}{2(n-1)}R_g]\geq0,
\end{align*}
where $R_g\leq 0$ since $Ric_g\leq 0$. Let $v=e^{\frac{n-2}{2}u}$. Then we have
\begin{align*}
[\Delta v -\frac{n-2}{4(n-1)}R_gv]\geq0.
\end{align*}
Let $m=\sup_{M\times[0,T_0)}|u|$, which is bounded by the proof of Lemma \ref{Lem_monotonicity}. For any $t>0$, let $\tilde{v}=\tilde{v}(x,t)$ be the solution to the Dirichlet boundary value problem of the linear elliptic equation
\begin{align*}
&\Delta \tilde{v}=\frac{n-2}{4(n-1)}R_ge^{\frac{n-2}{2}m},\,\,\,\text{in}\,\,\,M,\\
&\tilde{v}(x,t)=e^{\frac{n-2}{2}\phi(x,t)},\,\,\,p\in\partial M.
\end{align*}
Then by continuity, for any $T>0$, there exists a uniform constant $C=C(T)>0$, such that
 \begin{align*}
 \sup_{(x,t)\in\partial M\times[0,T]}|\frac{\partial}{\partial n}\tilde{v}|\leq C(T)<+\infty.
 \end{align*}
For $t<T_0$, we have
\begin{align*}
&\Delta \tilde{v}(x,t)\leq\frac{n-2}{4(n-1)}R_gv(x,t)\leq \Delta v(x,t),\,\,\,\forall x\in M,\\
&\tilde{v}(x,t)=v(x,t),\,\,\,x\in\partial M.
\end{align*}
By maximum principle, $v(x,t)\leq \tilde{v}(x,t)$ in $M$ and since $v(x,t)=\tilde{v}(x,t)$ for $(x,t)\in\partial M\times[0,T_0)$, we have
\begin{align*}
\frac{\partial}{\partial n}v\geq\frac{\partial}{\partial n}\tilde{v}\geq -C
\end{align*}
for some uniform constant $C=C(T_0)>0$ on $\partial M\times [0,T_0)$, and hence
\begin{align*}
\frac{\partial}{\partial n}u\geq\frac{2}{n-2}e^{-\frac{n-2}{2}u}\frac{\partial}{\partial n}\tilde{v}\geq -\frac{2}{n-2}C(T_0)e^{-\frac{n-2}{2}\sup_{M\times[0,T_0)}|u|}
\end{align*}
for $(x,t)\in \partial M\times[0,T_0)$. This gives a uniform lower bound of $\frac{\partial}{\partial n}u$ on $\partial M\times[0,T_0)$.

Now we give upper bound estimates on $\frac{\partial}{\partial n}u$. Let $(M_1, g_1)$ be either a small tubular neighborhood of $(M,g)$ in $\mathbb{R}^n$, or an extension of $(M,g)$ as in Section \ref{section_sub1.1} respectively.
 For any $x_0\in \partial M$, let $\bar{x}\in M_1\setminus M$ be as in Section \ref{section_sub1.1} and $r(x)$ be the distance function to $\bar{x}$ in $M_1$ for $x\in M_1$. Let $\delta_1>0$ be a small constant such that $\delta_1<\delta$ with $\delta>0$ defined in Section \ref{section_sub1.1}. Define the domain $U=\{x\in M,\,r(x)\leq r(x_0)+\delta_1\}$, with its boundary $\partial U=\Gamma_0\bigcup \Gamma_1$ where $\Gamma_0= U \bigcap \partial M$ and $\Gamma_1=\{x\in M\big|\,r(x)=r(x_0)+\delta_1\}$. Since $2\delta+r(x_0)$ is less than the injectivity radius at $\bar{x}$, $r(x)$ is smooth in $U$.
 For given $T>0$, we extend $\phi$ to a $C^{4+\alpha,2+\frac{\alpha}{2}}$
function on $U\times[0,T]$ for any $T > 0$ so that $\phi(x,0)=u_0(x)$ for $x\in U$. Define the function
\begin{align*}
\underline{u}(x,t)=\phi(x,t)+A(\frac{1}{r(x)^p}-\frac{1}{r(x_0)^p}),
\end{align*}
on $U\times [0,T]$, with two large constants $A>0$ and $p>0$ to be determined. We will choose $A=A(T)$ and $p=p(T)$ large so that $\underline{u}$ is a barrier function that controls the lower bound of $u$ on $U\times [0,T]$. Direct computations lead to
\begin{align*}
&\underline{u}_t=\phi_t,\\
&\nabla \underline{u}= \nabla \phi-Ap\,r^{-p-1}\nabla r,\\
&\nabla_i \nabla_j \underline{u}=\nabla_i\nabla_j\phi+Ap(p+1)\,r^{-p-2}\nabla_i r \nabla_j r\,-\,Ap\,r^{-p-1}\nabla_i \nabla_j r\\
&\Delta \underline{u}= \Delta\phi+Ap(p+1)\,r^{-p-2}\big|\nabla r\big|^2-Ap\,r^{-p-1}\Delta r=\Delta\phi+Ap(p+1)\,r^{-p-2}-Ap\,r^{-p-1}\Delta r.
\end{align*}
By continuity, there exist constants $C_1>0$ and $C_2=C_2(T)>0$ such that $|\nabla^2r|+|\Delta r|\leq C_1$ in $U$ and $|\nabla \phi|+|\nabla^2\phi|+|\Delta\phi|\leq C_2$ in $U\times [0,T]$. We have the calculation
\begin{align*}
(\bar{\nabla}^2\underline{u})_{ij}&=-Ric_{ij}(g)+(n-2)[\nabla_i\nabla_j\phi+Ap(p+1)\,r^{-p-2}\nabla_i r \nabla_j r\,-\,Ap\,r^{-p-1}\nabla_i \nabla_j r]\\
&\,\,\,\,\,\,+[\Delta\phi+Ap(p+1)\,r^{-p-2}-Ap\,r^{-p-1}\Delta r]g_{ij}+(n-2)[|\nabla\underline{u}|^2g_{ij}-\nabla_i\underline{u}\nabla_j\underline{u}].
\end{align*}
Since $-Ric_g\geq0$ and the matrix $(\nabla_ir\nabla_jr)$ and the last term are semi-positive, we have
\begin{align*}
(\bar{\nabla}^2\underline{u})_{ij}\geq (n-2)[\nabla_i\nabla_j\phi\,-\,Ap\,r^{-p-1}\nabla_i \nabla_j r]+[\Delta\phi-Ap\,r^{-p-1}\Delta r]g_{ij}+Ap(p+1)\,r^{-p-2}g_{ij},
\end{align*}
and hence for any $N_1>0$ and $A>0$, there exists a constant $p_0=p_0(T,N_1,A)>0$, such that for $p>p_0$, we have
\begin{align*}
(\bar{\nabla}^2\underline{u})_{ij}\geq N_1g_{ij}
\end{align*}
on $U\times [0,T]$. Let
\begin{align*}
N_1\geq\,\bar{\beta}_{n,n}^{\frac{1}{n}} e^{2\sup_{U\times [0,T]}|\phi_t|+2\sup_{U\times [0,T]}|\phi|}.
\end{align*}
Then we have
\begin{align*}
\log(\sigma_n(\bar{\nabla}^2\underline{u}))&\geq \log(N_1^n)\geq 2n\phi_t+\log(\bar{\beta}_{n,n})+2n\phi\\
&\geq 2n\underline{u}_t+\log(\bar{\beta}_{n,n})+2n\underline{u}
\end{align*}
on $U\times [0,T]$. Therefore, $\underline{u}$ is a subsolution of the $\sigma_n$-Ricci curvature flow. By Maclaurin's inequality, $\underline{u}$ is a subsolution of the $\sigma_k$-Ricci curvature flow for any $1\leq k\leq n$. By definition, we know that $\underline{u}\leq u$ on $\Gamma_0\times[0,T_0)$. On $\Gamma_1\times[0,T_0)$, $u$ and $\phi$ has uniform upper and lower bounds, and hence we can choose $A$ and $p$ large enough so that $\underline{u}<u$ on $\Gamma_1\times[0,T_0)$. Also, we have
\begin{align*}
\underline{u}(x,0)\leq \phi(x,0)=u_0(x)
\end{align*}
for $x\in U$. By maximum principle in Lemma \ref{lem_comparison}, we have that
\begin{align*}
u\geq \underline{u}
\end{align*}
in $U\times [0,T_0)$. Since $u(x_0,t)=\phi(x_0,t)=\underline{u}(x_0,t)$, we have
\begin{align*}
\frac{\partial}{\partial n}u\leq \frac{\partial}{\partial n}\underline{u}
\end{align*}
at $(x_0,t)$ for $t\in[0, T_0]$, where $n$ is the unit outer normal vector of $\partial M$ at $x_0$. Notice that the constants used here can be chosen uniformly for all $x_0\in \partial M$ and hence, there exists a unique constant $m_1=m_1(T_0)>0$, such that $\frac{\partial}{\partial n}u\leq m_1$ on $\partial M\times[0,T_0)$. Therefore, we have the $C^1$ estimates of $u$ at points on $\partial M$ i.e., there exists a constant $C=C(T_0)>0$ such that
\begin{align*}
|\nabla u(x,t)|\leq C
\end{align*}
for $(x,t)\in \partial M\times[0,T_0)$.
\end{proof}
Now we give the $C^1$ estimates of $u$ on $M\times[0,T_0)$.

\begin{lem}\label{lem_c1estimatesonT}
Let $(M,g)$ and $u\in C^4(M\times[0,T_0))$ be as in Lemma \ref{lem_gradientboundaryests}. Then there exists a constant $C=C(T_0)>0$ such that
\begin{align*}
|\nabla u(x,t)|\leq C
\end{align*}
for $(x,t)\in M\times [0,T_0)$.
\end{lem}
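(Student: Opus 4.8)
\noindent\emph{Proof strategy.} The plan is to run the Bernstein (differentiated–equation) argument on the space–time slab $M\times[0,T_0)$. The available inputs, all finite on $[0,T_0)$ and depending only on $T_0$, are: the boundary gradient bound of Lemma \ref{lem_gradientboundaryests}; the bound $|\nabla u_0|\leq C$ coming from $u_0\in C^{4,\alpha}(M)$; and the $L^{\infty}$ bounds on $u$ and on $u_t$ established in Lemma \ref{Lem_monotonicity}. Since $2ku_t=\log\sigma_k(\bar{\nabla}^2u)-\log\bar{\beta}_{k,n}-2ku$ with $u$ and $u_t$ bounded, the scalar $\sigma_k(\bar{\nabla}^2u)$ lies between two positive constants on $M\times[0,T_0)$; together with $\bar{\nabla}^2u\in\Gamma_k^+$ this makes the linearized operator of $(\ref{equn_flowlinear1})$,
\[
\mathcal{L}\psi=\frac{\bar{T}_{k-1}^{ij}}{\sigma_k(\bar{\nabla}^2u)}\bigl[(n-2)\nabla_i\nabla_j\psi+\Delta\psi\,g_{ij}+(n-2)(2g^{km}u_k\psi_m g_{ij}-\psi_iu_j-u_i\psi_j)\bigr]-2k\psi,
\]
strictly elliptic with positive–definite principal part $a^{ij}=\sigma_k(\bar{\nabla}^2u)^{-1}\bigl[(n-2)\bar{T}_{k-1}^{ij}+\text{tr}_g(\bar{T}_{k-1})\,g^{ij}\bigr]$; only (degenerate) ellipticity of $a^{ij}$ is used below, so no $C^2$ bound on $u$ is assumed.

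Next I would differentiate $(\ref{equn_flowsigmak})$ in a coordinate direction $e_{\ell}$, contract with $\nabla_{\ell}u$, and commute the third derivatives (the last step contributing curvature terms bounded by $\|Ric_g\|_{C^1}$). Writing $W=|\nabla u|^2$, this produces a parabolic inequality of the schematic form
\[
2kW_t-a^{ij}\nabla_i\nabla_jW\;\leq\;-2\,a^{ij}\nabla_i\nabla_{\ell}u\,\nabla_j\nabla_{\ell}u\;+\;C\bigl(1+W+W^{1/2}|\nabla^2u|\bigr)\,\text{tr}_g(a)\;-\;4kW,
\]
in which $-2\,a^{ij}\nabla_i\nabla_{\ell}u\,\nabla_j\nabla_{\ell}u\leq0$ is the favourable term, arising from the commutator and from the conformal gradient–quadratic piece $(n-2)(|\nabla u|^2g-du\otimes du)$ of $\bar{\nabla}^2u$. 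I would then apply the maximum principle to $\widetilde{W}=W\,\varphi(u)$ for a suitable $\varphi$ that is positive, bounded above and below on the range of $u$ over $[0,T_0)$, and chosen so that the second–order contribution of $\varphi$ has the right sign (for instance $\varphi(u)=(C_0-u)^{-\gamma}$ or $e^{-\beta u}$ with $C_0,\gamma,\beta$ to be fixed). Fix $T<T_0$. If $\max_{M\times[0,T]}\widetilde{W}$ is attained on the parabolic boundary $(\partial M\times[0,T])\cup(M\times\{0\})$, it is controlled by Lemma \ref{lem_gradientboundaryests} and $|\nabla u_0|$; otherwise, at an interior maximum $(x_0,t_0)$ one has $\partial_t\widetilde{W}\geq0$, $\nabla\widetilde{W}=0$, $a^{ij}\nabla_i\nabla_j\widetilde{W}\leq0$. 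The relation $\nabla\widetilde{W}=0$ is the key structural input:
\[
2\,\nabla_{\ell}\nabla_m u\,\nabla_{\ell}u\;=\;-\,\frac{\varphi'(u)}{\varphi(u)}\,W\,u_m\qquad\text{at }(x_0,t_0),
\]
so $\nabla u$ is an eigenvector of $\nabla^2u$ there; since every occurrence of $\nabla^2u$ surviving in the schematic inequality is contracted against $\nabla u$, each is converted into a multiple of $W\,\nabla u$, and the inequality reduces to a polynomial relation in $W$ only. Inserting this into $a^{ij}\nabla_i\nabla_j\widetilde{W}\leq0$, expressing $a^{ij}\nabla_i\nabla_ju$ through the equation, using the two-sided bound on $\sigma_k(\bar{\nabla}^2u)$ together with $\bar{\nabla}^2u\in\Gamma_k^+$ (which gives $a^{ij}\nabla_iu\,\nabla_ju\leq\tfrac12W\,\text{tr}_g(a)$ and a matching lower bound for $a^{ij}\nabla_i\nabla_{\ell}u\,\nabla_j\nabla_{\ell}u$), and fixing the parameters in $\varphi$ large in terms of $C$, $\|Ric_g\|_{C^1}$ and $\sup_{M\times[0,T_0)}(|u|+|u_t|)$, one forces $W(x_0,t_0)\leq C(T_0)$; hence $\widetilde{W}\leq C(T_0)$ on $M\times[0,T]$, and since $C(T_0)$ does not depend on $T<T_0$ we obtain $|\nabla u|\leq C(T_0)$ on all of $M\times[0,T_0)$.

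The step I expect to be the main obstacle is this last one: because no $C^2$ estimate is yet available, $\text{tr}_g(a)$ is a priori unbounded, so the terms $C\,W\,\text{tr}_g(a)$ and $C\,W^{1/2}|\nabla^2u|\,\text{tr}_g(a)$ are not absorbed by the zeroth–order term $-4kW$; after the eigenvector substitution they must be balanced against $-2\,a^{ij}\nabla_i\nabla_{\ell}u\,\nabla_j\nabla_{\ell}u$ and against the negative quadratic term generated by $\varphi$. The bookkeeping of these competing $W^2\,\text{tr}_g(a)$–terms — which forces the precise choice of $\varphi$ and of its parameters, and also requires controlling the first–order terms of $\mathcal L$ via the critical–point identity — is delicate, and is where the conformal structure (through $\bar{T}_{k-1}^{ij}(\bar{\nabla}^2u)_{ij}=k\sigma_k(\bar{\nabla}^2u)$ and $\text{tr}_g(\bar{\nabla}^2u)=-R_g+2(n-1)\Delta u+(n-2)(n-1)W$) must be exploited. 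Finally, since all constants above are finite and depend only on $T_0$, the non-compactness of $[0,T_0)$ is harmless, and the argument is uniform in $1\leq k\leq n$.
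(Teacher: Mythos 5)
Your strategy is essentially the paper's: a Bernstein argument for $W=|\nabla u|^2$ multiplied by an auxiliary factor $e^{\eta(u)}$, with the differentiated flow equation contracted against $\nabla u$, the critical-point identity used to convert every $\nabla^2u\cdot\nabla u$ contraction into $\eta'W\nabla u$-terms, the homogeneity identity $\bar{T}^{ij}(\bar{\nabla}^2u)_{ij}=k\sigma_k(\bar{\nabla}^2u)$ used to evaluate $a^{ij}\nabla_i\nabla_ju$, and the a priori unbounded factor $\sum_i\bar{T}_{ii}$ handled by producing a strictly positive coefficient in front of $|\nabla u|^2\sum_i\bar{T}_{ii}$ and then dividing, using the uniform lower bound $\sum_i\bar{T}_{ii}\geq c>0$ that follows from Maclaurin's inequality and the two-sided bound on $\sigma_k=\bar{\beta}_{k,n}e^{2ku_t+2ku}$ (inequality $(\ref{inequn_structurebounda1})$ in the paper); you should state that last lower bound explicitly, since without it the inequality $|\nabla u|^2\sum_i\bar{T}_{ii}\leq C(1+\sum_i\bar{T}_{ii})$ does not yield a gradient bound. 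The one concrete place where your write-up would fail if executed literally is the choice of multiplier: the computation forces, on the range of $u$ over $[0,T_0)$, both $\eta''-(\eta')^2-\eta'\geq0$ and $\eta''-(\eta')^2\geq c_0>0$, i.e.\ the logarithm of the multiplier must be strongly convex relative to $(\eta')^2$ and $\eta'$. Your candidates do not satisfy this: $e^{-\beta u}$ has $\eta''=0$, so $\eta''-(\eta')^2=-\beta^2<0$, and $(C_0-u)^{-\gamma}$ with $\gamma$ large has $\eta''-(\eta')^2=\gamma(1-\gamma)(C_0-u)^{-2}<0$. The paper takes $\eta(s)=C_1(C_2+s)^p$ with $C_2=1-\beta_1$, $p$ large and then $C_1$ small, engineered precisely so that $\eta''$ dominates. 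Two further small points: the good Hessian-quadratic term $a^{ij}\nabla_i\nabla_\ell u\,\nabla_j\nabla_\ell u$ is simply dropped in the paper rather than used to absorb anything (no terms of the form $W^{1/2}|\nabla^2u|\,\mathrm{tr}(a)$ survive once the critical-point identity has been applied, so the delicate balancing you anticipate largely does not arise); and not every surviving occurrence of $\nabla^2u$ is contracted against $\nabla u$ --- the term $\eta'a^{ij}\nabla_i\nabla_ju$ is handled through the equation, exactly as you indicate.
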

\begin{proof}
The interior gradient estimate is relatively standard, and here we modify the argument in \cite{LS} (see also \cite{GV}). By Lemma \ref{Lem_monotonicity}, there exist two constants $-\infty<\beta_1<\beta_2<+\infty$ depending on $T_0$ such that $\beta_1\leq u\leq \beta_2$ on $M\times[0,T_0)$. We define a function
\begin{align*}
\xi(x,t)=(1+\frac{|\nabla u|^2}{2})e^{\eta(u)},
\end{align*}
where
\begin{align*}
\eta(s)=C_1(C_2+s)^p
 \end{align*}
 is a function on $s\in[\beta_1,+\infty)$ with constants $C_2>-\beta_1$, $C_1>0$ and $p>0$, depending only on $T_0,\,\beta_1$ and $\beta_2$, to be determined. Suppose that there exists $x_0\in M^{\circ}$ and $t_0\in(0,T_0)$ such that
 \begin{align*}
\xi(x_0,t_0)=\sup_{M\times[0,t_0]}\xi.
\end{align*}
We take geodesic normal coordinates $(x^1,...,x^n)$ centered at $x_0\in M$ such that $\Gamma_{ij}^m(x_0)=0$, $g_{ij}(x_0)=\delta_{ij}$. Then we have at $(x_0,t_0)$,
\begin{align}
\xi_{x_i}&\label{equn_interiord1}=e^{\eta(u)}[u_{x_ax_i}u_{x_a}+(1+\frac{1}{2}u_{x_a}u_{x_a})\eta'(u)u_{x_i}]=0,\\
\xi_t&\label{inequn_maximumpointtimed1}=e^{\eta(u)}[u_{x_at}u_{x_a}+(1+\frac{u_{x_a}u_{x_a}}{2})\eta'(u)u_t]\geq0,\\
0\geq \xi_{x_ix_j}&=[\frac{1}{2}\frac{\partial^2}{\partial x_ix_j}g^{ab}u_{x_a}u_{x_b}+u_{x_ax_ix_j}u_{x_a}+u_{x_ax_i}u_{x_ax_j}+\eta'(u)u_{x_ax_i}u_{x_a}u_{x_j}+\eta'(u)u_{x_ax_j}u_{x_a}u_{x_i}\nonumber\\
&+(1+\frac{1}{2}|\nabla u|^2)(\eta'(u))^2u_{x_i}u_{x_j}+(1+\frac{1}{2}|\nabla u|^2)\eta''(u)u_{x_i}u_{x_j}+(1+\frac{1}{2}|\nabla u|^2)\eta'(u)u_{x_ix_j}]e^{\eta(u)}\nonumber\\
&=[\frac{1}{2}\frac{\partial^2}{\partial x_ix_j}g^{ab}u_{x_a}u_{x_b}+u_{x_ax_ix_j}u_{x_a}+u_{x_ax_i}u_{x_ax_j}+(1+\frac{1}{2}|\nabla u|^2)(\eta''(u)-(\eta'(u))^2)u_{x_i}u_{x_j}\nonumber\\
&+(1+\frac{1}{2}|\nabla u|^2)\eta'(u)u_{x_ix_j}]e^{\eta(u)},\nonumber
\end{align}
where the last identity is by $(\ref{equn_interiord1})$. Notice that the tensor
\begin{align*}
\bar{Q}_{ij}\equiv \frac{1}{\sigma_k(\bar{\nabla}^2u)}((n-2)(\bar{T}_{k-1})_{ij}+g^{ab}(\bar{T}_{k-1})_{ab}g_{ij}),
\end{align*}
is positive definite. Therefore, at $(x_0,t_0)$,
\begin{align}
0\,\,\,\,\,\,\,\,\,\geq\,\,&\nonumber[\frac{1}{(1+\frac{1}{2}|\nabla u|^2)}(\bar{Q}_{ij}u_{x_ix_j x_a}u_{x_a}+\frac{1}{2}\bar{Q}_{ij}\frac{\partial^2}{\partial x_i \partial x_j}g^{ab}u_{x_a}u_{x_b}+\bar{Q}_{ij}u_{x_ax_i}u_{x_ax_j})\\
&\label{inequn_maximumd2}+(\eta''(u)-(\eta'(u))^2)\bar{Q}_{ij}u_{x_i}u_{x_j}+\eta'(u)\bar{Q}_{ij}u_{x_ix_j}]e^{\eta(u)}.
\end{align}
By definition, at $(x_0,t_0)$ we have
\begin{align*}
\bar{\nabla}^2u=-Ric_g+(n-2)u_{x_ix_j}+\Delta u \delta_{ij}-(n-2)u_{x_i}u_{x_j}+(n-2)|\nabla u|^2\delta_{ij},
\end{align*}
and hence by the identity $\bar{T}_{ij}(\bar{\nabla}^2u)_{ij}=k\sigma_k(\bar{\nabla}^2u)$ and the equation $(\ref{equn_flowsigmak})$, we obtain
\begin{align}
\bar{Q}_{ij}u_{x_ix_j}&=\frac{1}{\sigma_k(\bar{\nabla}^2u)}[\bar{T}_{ij}(\bar{\nabla}^2u)_{ij}+\bar{T}_{ij}\big(Ric_{ij}+(n-2)u_{x_i}u_{x_j}-(n-2)|\nabla u|^2\delta_{ij}\big)]\nonumber\\
&\label{equn_flowformchanging}=\frac{1}{\sigma_k(\bar{\nabla}^2u)}[k\bar{\beta}_{k,n}e^{2ku_t+2ku}+\bar{T}_{ij}\big(Ric_{ij}+(n-2)u_{x_i}u_{x_j}-(n-2)|\nabla u|^2\delta_{ij}\big)],
\end{align}
 at $(x_0,t_0)$. Now take derivative of $x_i$ on both sides of $(\ref{equn_flowsigmak})$, and we have at $(x_0,t_0)$,
\begin{align*}
2ku_{tx_i}&=\frac{1}{\sigma_k(\bar{\nabla}^2u)}\bar{T}_{ab}[-\frac{\partial}{\partial x_i}Ric_{ab}+(n-2)u_{x_ax_bx_i}-(n-2)\frac{\partial}{\partial x_i}\Gamma_{ab}^mu_{x_m}\\
&+(u_{x_mx_mx_i}-\frac{\partial}{\partial x_i}\Gamma_{mm}^cu_{x_c})g_{ab}+(n-2)(2u_{x_mx_i}u_{x_m}g_{ab}-u_{x_ax_i}u_{x_b}-u_{x_a}u_{x_bx_i})]-2ku_{x_i},
\end{align*}
and hence at $(x_0,t_0)$, for $1\leq a\leq n$,
\begin{align*}
\bar{Q}_{ij}u_{x_ix_jx_a}&=2k(u_{tx_a}+u_{x_a})+\frac{1}{\sigma_k(\bar{\nabla}^2u)}\bar{T}_{ij}[(n-2)(-2u_{x_mx_a}u_{x_m}g_{ij}+u_{x_ix_a}u_{x_j}+u_{x_i}u_{x_jx_a}+\frac{\partial}{\partial x_a}\Gamma_{ij}^mu_{x_m})\\
&+\frac{\partial}{\partial x_a}\Gamma_{mm}^cu_{x_c}g_{ij}+\frac{\partial}{\partial x_a}Ric_{ij}].
\end{align*}
Now contracting this equation with $\nabla u$ we have at $(x_0,t_0)$,
\begin{align}
\bar{Q}_{ij}u_{x_ix_jx_a}u_{x_a}&=2k(u_{tx_a}u_{x_a}+u_{x_a}u_{x_a})+\frac{1}{\sigma_k(\bar{\nabla}^2u)}\bar{T}_{ij}[(n-2)(-2u_{x_mx_a}u_{x_m}u_{x_a}g_{ij}+2u_{x_ix_a}u_{x_j}u_{x_a}\nonumber\\
&+\frac{\partial}{\partial x_a}\Gamma_{ij}^mu_{x_m}u_{x_a})+\frac{\partial}{\partial x_a}\Gamma_{mm}^cu_{x_c}u_{x_a}g_{ij}+u_{x_a}\frac{\partial}{\partial x_a}Ric_{ij}]\nonumber\\
&\geq \frac{1}{\sigma_k(\bar{\nabla}^2u)}\bar{T}_{ij}[(n-2)\big(2(1+\frac{1}{2}|\nabla u|^2)\eta'(u)(u_{x_a}u_{x_a}g_{ij}
-u_{x_i}u_{x_j})
+\frac{\partial}{\partial x_a}\Gamma_{ij}^mu_{x_m}u_{x_a}\big) \nonumber\\
&\label{equn_contractiond1}+\frac{\partial}{\partial x_a}\Gamma_{mm}^cu_{x_c}u_{x_a}g_{ij}+u_{x_a}\frac{\partial}{\partial x_a}Ric_{ij}] +2k(u_{x_a}u_{x_a}-(1+\frac{|\nabla u|^2}{2})\eta'(u)u_t),
\end{align}
where the last inequality is by $(\ref{equn_interiord1})$ and $(\ref{inequn_maximumpointtimed1})$.
Substituting $(\ref{equn_flowformchanging})$ and $(\ref{equn_contractiond1})$ to $(\ref{inequn_maximumd2})$, we have
\begin{align*}
0\geq &\frac{1}{(1+\frac{1}{2}|\nabla u|^2)}[2k(|\nabla u|^2-(1+\frac{|\nabla u|^2}{2})\eta'(u)u_t)+\frac{\bar{T}_{ij}}{\sigma_k(\bar{\nabla}^2u)}\frac{\partial}{\partial x_a}Ric_{ij}u_{x_a}+\bar{Q}_{ij}u_{x_ax_i}u_{x_ax_j}+\bar{Q}_{ij}R_{iajb}u_{x_a}u_{x_b}]\\
&+\frac{2(n-2)}{\sigma_k(\bar{\nabla}^2u)}\bar{T}_{ij}\,(|\nabla u|^2g_{ij}
-u_{x_i}u_{x_j})\eta'(u)+(\eta''-(\eta')^2)\bar{Q}_{ij}u_{x_i}u_{x_j}\\
&+\,\frac{\eta'}{\sigma_k(\bar{\nabla}^2u)}[k\bar{\beta}_{k,n}e^{2ku_t+2ku}+\bar{T}_{ij}\,(Ric_{ij}+(n-2)(u_{x_i}u_{x_j}-|\nabla u|^2g_{ij}))]\\
=&(n-2)\frac{1}{\sigma_k(\bar{\nabla}^2u)}(\eta''-(\eta')^2-\eta')\bar{T}_{ij}u_{x_i}u_{x_j}+\frac{1}{\sigma_k(\bar{\nabla}^2u)}(\eta''-(\eta')^2+(n-2)\eta')|\nabla u|^2\sum_i\bar{T}_{ii}\\
&+\eta'\,\frac{k\bar{\beta}_{k,n}e^{2ku+2ku_t}}{\sigma_k(\bar{\nabla}^2u)}+\,\frac{\eta'}{\sigma_k(\bar{\nabla}^2u)}\bar{T}_{ij}Ric_{ij}-2k\eta'\,u_t\\
&+\frac{1}{(1+\frac{1}{2}|\nabla u|^2)}[2k|\nabla u|^2+\frac{\bar{T}_{ij}}{\sigma_k(\bar{\nabla}^2u)}\frac{\partial}{\partial x_a}Ric_{ij}u_{x_a}+\bar{Q}_{ij}u_{x_ax_i}u_{x_ax_j}+\bar{Q}_{ij}R_{iajb}u_{x_a}u_{x_b}].
\end{align*}
Recall that $u$ and $u_t$ are uniformly bounded from above and blow on $M\times[0,T_0)$ by Lemma \ref{Lem_monotonicity}, and so is the term
\begin{align*}
\frac{1}{\sigma_k(\bar{\nabla}^2u)}=\bar{\beta}_{k,n}^{-1}e^{-2ku_t-2ku}.
 \end{align*}
Since $\bar{T}_{k-1}$ and $\bar{Q}_{k-1}$ are positively definite, we have at $(x_0,t_0)$,
\begin{align*}
0\geq &(n-2)\frac{e^{2ku}}{\sigma_k(\bar{\nabla}^2u)}(\eta''-(\eta')^2-\eta')\bar{T}_{ij}u_{x_i}u_{x_j}+\frac{e^{2ku}}{\sigma_k(\bar{\nabla}^2u)}(\eta''-(\eta')^2+(n-2)\eta')|\nabla u|^2\sum_{i}\bar{T}_{ii}\\
&-C-C\sum_{i}\bar{T}_{ii},
 \end{align*}
with the constant $C>0$ depending on $T_0$, $\sup_{\partial M\times[0,T_0)}(|\phi|+|\phi_t|)$, $\sup_{M}\log(\sigma_k(\bar{\nabla}^2u_0))$, $\sup_{M}|u_0|$, $\sup_{M}(|Ric_g|+|\nabla Ric_g|)$ and $\sup_{\beta_1\leq s\leq \beta_2}|\eta'(s)|$. By the definition of $\eta$, we have $\eta'>0$, and
\begin{align*}
\eta''-(\eta')^2-\eta'=C_1p(C_2+s)^{p-2}[(p-1)-C_1p(C_2+s)^p-(C_2+s)].
\end{align*}
For $\beta_1\leq s\leq \beta_2$, we choose $C_2=1-\beta_1$, $p>0$ large and then choose $C_1>0$ small so that
\begin{align*}
&\eta''-(\eta')^2\geq C_1p,\\
&\eta''-(\eta')^2-\eta'\geq0,
\end{align*}
and hence at $(x_0,t_0)$
\begin{align*}
 |\nabla u|^2\sum_i\bar{T}_{ii}\leq \frac{\sigma_k(\bar{\nabla}^2u)}{C_1pe^{2ku}}(C+C\sum_{i}\bar{T}_{ii})=\frac{1}{C_1p}\bar{\beta}_{k,n}e^{2ku_t}(C+C\sum_{i}\bar{T}_{ii})\leq \bar{C}(1+\sum_{i}\bar{T}_{ii}),
\end{align*}
where the constant $\bar{C}>0$ depends on $T_0$, $\sup_{\partial M\times[0,T_0)}(|\phi|+|\phi'|)$, $\sup_{M}\log(\sigma_k(\bar{\nabla}^2u_0))$, $\sup_{M}|u_0|$, $\sup_{M}(|Ric_g|+|\nabla Ric_g|)$ and $\sup_{\beta_1\leq s\leq \beta_2}|\eta'(s)|$. Recall that
\begin{align}
\sum_{i}\bar{T}_{ii}=(n-k+1)\,\sigma_{k-1}(\bar{\nabla}^2u)&\geq (n-k+1)\left(\begin{matrix} n\\ k-1\end{matrix}\right)\,\,\big(\left(\begin{matrix} n\\ k\end{matrix}\right)^{-1}\sigma_k(\bar{\nabla}^2u)\,\big)^{\frac{k-1}{k}}\nonumber\\
&\label{inequn_structurebounda1}=(n-k+1)\left(\begin{matrix} n\\ k-1\end{matrix}\right)\big(\left(\begin{matrix} n\\ k\end{matrix}\right)^{-1}\bar{\beta}_{k,n}e^{2ku_t+2ku}\big)^{\frac{k-1}{k}}\geq C,
\end{align}
for some uniform constant $C=C(T_0)>0$, where we have used the Maclaurin's inequality and the uniform lower bound of $u$ and $u_t\geq0$. Therefore, 
\begin{align*}
|\nabla u|^2\leq \bar{C}(1+\frac{1}{C}).
\end{align*}
This combining with the boundary $C^1$ estimates completes the proof of the gradient estimates of $u$ on $M\times[0,T_0)$.

\end{proof}

Now we consider the $C^2$ estimates on $u$ at the points on $\partial M\times[0,T_0)$.

 \begin{lem}\label{lem_c2boundary1-1}
 Let $(M,g)$ and $u\in C^4(M\times[0,T_0))$ be as in Lemma \ref{lem_gradientboundaryests}. Then there exists a constant $C=C(T_0)>0$ such that
 \begin{align*}
 |\nabla^2u|\leq C
 \end{align*}
 on $\partial M\times[0,T_0)$.
 \end{lem}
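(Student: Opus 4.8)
The plan is to estimate the three types of second derivatives at a boundary point $x_0 \in \partial M$ separately: pure tangential, mixed tangential-normal, and pure normal. Fix $x_0$ and choose boundary normal coordinates $(x^1,\dots,x^{n-1},x^n)$ so that $x^n = r$ is the distance to $\partial M$ (or, using the extended manifold $(M_1,g_1)$ as in Section \ref{section_sub1.1}, the distance function $r(x)$ to the exterior point $\bar x$, which is smooth on a fixed collar), $x^1,\dots,x^{n-1}$ being coordinates on $\partial M$. First I would dispose of the \emph{tangential-tangential} derivatives: since $u = \phi$ on $\partial M$, for tangential directions $e_\alpha, e_\beta$ we have $\nabla^2 u(e_\alpha, e_\beta) = \nabla^2 \phi(e_\alpha,e_\beta) - (\partial_n u)\,\mathrm{II}(e_\alpha,e_\beta)$ where $\mathrm{II}$ is the second fundamental form of $\partial M$; this is bounded by the $C^2$-norm of $\phi$ (from the hypothesis $\phi \in C^{4+\alpha,2+\frac{\alpha}{2}}$), the boundary $C^1$ estimate on $u$ from Lemma \ref{lem_gradientboundaryests}, and the geometry of $\partial M$, all uniformly in $t \in [0,T_0)$.

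For the \emph{mixed} derivatives $\nabla^2 u(e_\alpha, n)$, I would follow the standard barrier argument adapted to the parabolic $\sigma_k$-flow (a parabolic analogue of the arguments in \cite{GSW} and \cite{Guan1}). Differentiating the flow equation $(\ref{equn_flowsigmak})$ tangentially gives a linear parabolic equation for the tangential derivative $w = \nabla_{e_\alpha}(u - \phi)$ with coefficients governed by the positive-definite Newton tensor $\bar T_{k-1}^{ij}/\sigma_k(\bar\nabla^2 u)$, as in $(\ref{equn_flowlinear1})$; since $w \equiv 0$ on $\partial M \times [0,T_0)$ and $w$ is controlled at $t = 0$ and on the inner boundary $\Gamma_1$ of a fixed collar $U$, one builds a barrier of the form $B = \pm A_1(u-\underline u) + A_2 |x'|^2 + A_3(\text{linear in } r)$ — using the sub-solution barrier $\underline u$ from the proof of Lemma \ref{lem_gradientboundaryests}, whose defining positivity $\bar\nabla^2\underline u \geq N_1 g$ makes it a strict parabolic supersolution of the linearized operator — so that $\pm w \leq B$ on $\partial(U\times[0,T_0))$; the comparison principle (Lemma \ref{lem_comparison}, applied to the linear operator) then yields $|\partial_n w|(x_0,t) \leq \partial_n B(x_0,t) \leq C(T_0)$.

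The main obstacle, as always in this type of estimate, is the \emph{pure normal} second derivative $\partial_n^2 u(x_0,t)$. Here I would use the structure of the equation itself: having already bounded all tangential-tangential and tangential-normal entries of $\bar\nabla^2 u(x_0,t)$ (these contribute through $(\ref{equn_thetensor})$ once the $C^1$ bound is in hand), and knowing $\bar\nabla^2 u \in \Gamma_k^+$ with $\sigma_k(\bar\nabla^2 u) = \bar\beta_{k,n} e^{2ku_t + 2ku}$ bounded above and below (by Lemma \ref{Lem_monotonicity}, $u$ and $u_t$ are uniformly bounded on $M\times[0,T_0)$, $u_t \geq 0$), I want to conclude that the remaining entry $(\bar\nabla^2 u)_{nn}$ is bounded above — an upper bound suffices since $\sigma_k > 0$ in the cone already forces a lower bound once the other entries and $\sigma_{k-1}$ are controlled. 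Concretely, expand $\sigma_k$ along the last row/column: $\sigma_k(\bar\nabla^2 u) = (\bar\nabla^2 u)_{nn}\,\sigma_{k-1}(B') + (\text{terms bounded by the already-controlled entries})$, where $B'$ is a submatrix whose $\sigma_{k-1}$ is bounded below away from zero (using $\bar\nabla^2 u \in \Gamma_k^+$, the bound $\sigma_k = \bar\beta_{k,n}e^{2ku_t+2ku} \geq c > 0$, and $(\ref{inequn_structurebounda1})$-type Maclaurin estimates giving $\sigma_{k-1} \geq c' > 0$); hence $(\bar\nabla^2 u)_{nn} \leq C(T_0)$. The delicate point is verifying that $\sigma_{k-1}$ of the relevant tangential submatrix stays uniformly positive — this requires keeping careful track of which minors enter and invoking the cone condition, and is where a Pogorelov-type or concavity-based argument (as in the elliptic treatments of \cite{GSW}, \cite{Guan1}) would be carried out in full detail. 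Once $|\bar\nabla^2 u| \leq C$ on $\partial M \times [0,T_0)$, unwinding $(\ref{equn_thetensor})$ with the $C^1$ bound from Lemma \ref{lem_c1estimatesonT} gives $|\nabla^2 u| \leq C(T_0)$ on $\partial M \times [0,T_0)$, completing the proof.
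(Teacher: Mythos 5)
Your decomposition into tangential--tangential, mixed, and pure normal derivatives is exactly the paper's strategy, and the first two steps match: the tangential part follows from the Dirichlet data and the boundary $C^1$ bound, and for the mixed derivatives the paper likewise differentiates the flow tangentially, works with $v=u_{x_i}-\phi_{x_i}$ (which vanishes on $\Gamma_0$), and compares against a barrier via the linearized parabolic operator $L$ from $(\ref{equn_linearoperator1})$ --- the paper's barrier is simply $\xi(x)=r(x)^{-p}-r(x_0)^{-p}$ rather than your Guan-type combination with $u-\underline u$ and $|x'|^2$, but both are legitimate and the mechanism ($L(\xi)\geq|L(v)|$ via the lower bound $(\ref{inequn_structurebounda1})$ on $\sum_i\bar T_{ii}$, then comparison) is the same.

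The gap is in your treatment of $\partial_n^2u$. You assert that once the tangential and mixed derivatives are controlled, ``all tangential-tangential and tangential-normal entries of $\bar\nabla^2u$'' are bounded and only the $(n,n)$ entry remains, so that a Laplace expansion $\sigma_k=(\bar\nabla^2u)_{nn}\,\sigma_{k-1}(B')+(\text{bounded})$ applies with $B'$ a controlled tangential block. This premise is false: by $(\ref{equn_thetensor})$ the term $\Delta u\,g_{ij}$ places $u_{nn}$ on \emph{every} diagonal entry of $\bar\nabla^2u$, so the tangential diagonal entries are not bounded a priori, and your submatrix $B'$ itself contains $u_{nn}$. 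Consequently the ``delicate point'' you flag --- proving $\sigma_{k-1}(B')\geq c'>0$ by a Pogorelov-type or concavity argument --- is not the right question, and as posed it cannot be settled from the information you have. The correct observation, which the paper makes and which is simpler than what you propose, is that this very structure saves the argument: in normal/tangential block form
\begin{align*}
\bar\nabla^2u=u_{nn}\left(\begin{matrix}n-1&0\\ 0& g\big|_{\partial M}\end{matrix}\right)+O(1),
\end{align*}
so $\sigma_k(\bar\nabla^2u)=u_{nn}^k\bigl(\Lambda_{k,n}+o(1)\bigr)$ as $u_{nn}\to+\infty$, where $\Lambda_{k,n}=\sigma_k(n-1,1,\dots,1)>0$ is a universal constant. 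Since $\sigma_k(\bar\nabla^2u)=\bar\beta_{k,n}e^{2ku_t+2ku}\leq C(T_0)$ by Lemma \ref{Lem_monotonicity}, this immediately yields $u_{nn}\leq C(T_0)$ with no minor analysis and no concavity input. (Your remark that the lower bound on $u_{nn}$ is the easy direction is fine; the paper gets it from $\mathrm{tr}(\bar\nabla^2u)\geq0$.) With this correction your outline closes.
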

 \begin{proof}
 We use the indices $e_i,\,e_j$ to refer to the tangential vector fields on $\partial M$ and $n$ the outer normal vector field. Notice that we have obtained the uniform bounds
\begin{align*}
\sup_{\partial M\times[0,T_0)}(|u|+|\nabla u|)\leq K,
\end{align*}
for some constant $K>0$ on $\partial M\times[0,T_0)$. By definition, we immediately have the control on the second order tangential derivatives
\begin{align*}
\sup_{\partial M\times[0,T_0)}|\nabla_i\nabla_ju|\leq C
\end{align*}
on $\partial M\times[0,T_0)$ with some constant $C>0$ depending on $K$ and $\sup_{\partial M\times[0,t_0]}(|\phi|+|\nabla\phi|+|\nabla_{\tau}^2\phi|)$ where $\nabla_{\tau}^2\phi$ means the second order tangential derivatives of $\phi$ on $\partial M$. We extend $\phi$ to a function in $C^{4,2}(U\times[0,+\infty))$ still denoted as $\phi$ such that $\phi\in C^{4+\alpha,2+\frac{\alpha}{2}}(M\times[0,T])$ for any $T>0$ and $\phi(x,0)=u_0(x)$ for $x\in M$.

We now estimate the mixed second order derivatives $|\nabla_n\nabla_iu|$ with $n$ the normal vector field on $\partial M$. Let $(M_1,g_1)$ be the extension of $(M,g)$ as in Section \ref{section_sub1.1}.
Let $\delta>\epsilon_1>0$ be the small constants in Section \ref{section_sub1.1}. For any $x_0\in \partial M$, let $\bar{x}$ be the point with respect to $x_0$ as defined in Section \ref{section_sub1.1}.
 Define the exponential map $\text{Exp}: \partial M \times[-\epsilon_1-2\delta,\epsilon_1+2\delta]\to M_1$ such that $\text{Exp}_{q}(s)$ is the point along the geodesic starting from $q\in \partial M$ in the normal direction of $\partial M$ of distance $|s|$ to $q$.
 Here we take the inner direction to be positive i.e., $\text{Exp}_{q}(s)\in M^{\circ}$ when $s>0$. In particular, $\bar{x}=\text{Exp}_{x_0}(-\epsilon_1)$. Notice that $\text{Exp}: \partial M \times[-\epsilon_1-2\delta,\epsilon_1+2\delta]$ is a diffeomorphism to its image. In fact we can choose $\epsilon_1+2\delta< \epsilon$ where $\epsilon$ is strictly less than the lower bound of injectivity radius of each point in the thin $(\epsilon_1+2\delta)$-neighborhood $\Omega$ of $\partial M$. We now use the Femi coordinate in a small neighborhood $V_{x_0}=B_{\epsilon}(x_0)$ of $x_0$ in $M_1$: Let $(x^1,...,x^{n-1})$ be a geodesic normal coordinate centered at $x_0$ on $(\partial M, g\big|_{\partial M})$. We take $(x^1(q),...,x^{n-1}(q), x^n)$ as the coordinate of the point $\text{Exp}_q(x^n)$ in $V_{x_0}$. Define the distance function $r(x)=\text{dist}(x,\bar{x})$ for $x\in M_1$. Denote $U=\{x\in M\big|\,r(x)\leq\delta+r(x_0)\}$, $\Gamma_0=U\bigcap \partial M$ and $\Gamma_1=\{x\in M\big|\,r(x)=\delta+r(x_0)\}$. By our choice of the small constant $\epsilon_1+2\delta$, we have $\Gamma_0\subseteq V_{x_0}$ and hence $\frac{\partial}{\partial x^i}$ ($i<n$) is a tangential derivative of $\partial M$ on $\Gamma_0$. It is clear that $r(x)$ is smooth on $U$. The metric has the orthogonal decomposition
\begin{align*}
g=d(x^n)^2+g_{x^n}
\end{align*}
in $U$ and we have $\Gamma_{ab}^c(x_0)=0$ for $a,b,c\in\{1,2,...,n\}$. For $i\in\{1,...,n-1\}$, taking derivative of $\frac{\partial}{\partial x^i}$ on both sides of $(\ref{equn_flowsigmak})$ we have
\begin{align}
0=&-2ku_{tx_i}-2ku_{x_i}+\frac{1}{\sigma_k(\bar{\nabla}^2u)}\bar{T}_{ab}[-\nabla_iRic_{ab}+(n-2)\nabla_i\nabla_a\nabla_bu+\nabla_i\Delta ug_{ab}\nonumber\\
&\label{equn_floweqnd1}+2(n-2)\big(\nabla_i\nabla_c u\nabla_c ug_{ab}-\nabla_i\nabla_au\nabla_bu\big)].
\end{align}
Now we commute derivatives to have
\begin{align*}
&\nabla_i\nabla_a\nabla_bu=\nabla_a\nabla_bu_{x_i}+\text{Rm}\ast\nabla u,\\
&\nabla_i\Delta u= \Delta u_{x_i} +\text{Rm}\ast \nabla u,
\end{align*}
where the terms $\text{Rm}\ast \nabla u$ are contractions of some Riemannian curvature terms and $\nabla u$. Define the linearized operator $L$ acting on $\varphi$ as
\begin{align}\label{equn_linearoperator1}
L(\varphi)\equiv&\frac{1}{\sigma_k(\bar{\nabla}^2u)}\bar{T}_{ab}[(n-2)\nabla_a\nabla_b\varphi+\Delta \varphi g_{ab}+2(n-2)\big(<\nabla \varphi,\nabla u>g_{ab}-\nabla_a\varphi\nabla_bu\big)]\\
&-2k\varphi_t-2k\varphi.\nonumber
\end{align}
Therefore, by $(\ref{equn_floweqnd1})$ we have
\begin{align}
|L(u_{x_i})|= \frac{1}{\sigma_k(\bar{\nabla}^2u)}|\bar{T}_{ab}(-\nabla_iRic_{ab}+(Rm\ast \nabla u))|&\leq C\sum_i\bar{T}_{ii}(1+|\nabla u|)\nonumber\\
&\label{inequn_innerlinear}\leq C\sum_{i}\bar{T}_{ii},
\end{align}
for some constant $C>0$ depending on $\sup_{M} |Rm|$, the lower bound of $u_t+u$ and the upper bound of $|\nabla u|$ on $M\times[0,T_0)$, which has been uniformly controlled.
Recall that by $(\ref{inequn_structurebounda1})$, we have
\begin{align*}
\sum_{i}\bar{T}_{ii}\geq C
\end{align*}
for some uniform constant $C=C(T_0)>0$, and hence direct calculation leads to the bound
\begin{align}\label{inequn_bdlinear}
|L(\phi_{x_i})|\leq C\sum_i \bar{T}_{ii}+C \leq C\sum_i \bar{T}_{ii},
\end{align}
on $U\times[0,T_0)$, where $C>0$ in the inequalities are uniform constants depending on $T_0$, $k$, $n$, $\sup_{M\times[0,T_0)}(|u|+|u_t|+|\nabla u|)$ and $\sup_{U\times[0,T_0]}(|\phi_{x_i}|+|\phi_{tx_i}|+|\nabla\phi_{x_i}|+|\nabla^2\phi_{x_i}|)$. Define the function $v=u_{x_i}-\phi_{x_i}$ in $U\times[0,T_0)$. Now by $(\ref{inequn_innerlinear})$ and $(\ref{inequn_bdlinear})$ we have
\begin{align*}
|L(v)|\leq C\sum_i\bar{T}_{ii},
\end{align*}
for some uniform constant $C=C(T_0)>0$. Also, $v=0$ on $\Gamma_0$.

Now let
\begin{align*}
\xi(x)=\frac{1}{r(x)^p}-\frac{1}{r(x_0)^p}
\end{align*}
for $x\in U$, where $p>0$ is a constant depending on $T_0$ to be determined. Following the calculation in Section \ref{section_sub1.1}, we have that for $p=p(T_0)>0$ large,
\begin{align*}
(n-2)\nabla^2\xi+\Delta \xi g\geq \frac{p^2}{4}r^{-p-2}g.
\end{align*}
Since $\xi\leq0$, $|\nabla u|$ is uniformly bounded from above and $u_t+u$  is uniformly bounded from blow, we choose $p=p(T_0)>0$ large so that
\begin{align*}
L(\xi)&\geq \frac{1}{\bar{\beta}_{k,n}e^{2ku_t+2ku}}[\frac{p^2}{4}r^{-p-2}-C|\nabla u|\,|\nabla \xi|]\sum_i\bar{T}_{ii}-2k\xi\\
&\geq \frac{1}{C}(\frac{p^2}{4}r^{-p-2}-Cpr^{-p-1})\sum_i\bar{T}_{ii}\geq \frac{p^2}{8C}r^{-p-2}\sum_i\bar{T}_{ii}\\
&\geq |L(v)|
\end{align*}
on $U\times[0,T_0)$ for some uniform constant $C=C(T_0)>0$. Now we take $p>0$ even larger so that $\xi<-|v|$ on $\Gamma_1\times[0,T_0)$ and hence, $\xi\leq -|v|$ on $\partial U\times[0,T_0)$. Recall that
\begin{align*}
\xi(x)\leq0=v(x,0)
\end{align*}
for $x\in M$, we have by maximum principle,
\begin{align*}
\pm v(x,t)\geq \xi(x)
\end{align*}
for $(x,t)\in U\times[0,T_0)$. Since $v(x_0,t)=\xi(x_0)=0$, we have for $i=1,...,n-1$,
\begin{align*}
|\nabla_nu_{x_i}(x_0,t)|\leq |\nabla_n\phi_{x_i}(x_0,t)|+|\nabla_nv_{x_i}(x_0,t)|\leq |\nabla_n\phi_{x_i}(x_0,t)|+\nabla_n\xi(x_0)\leq C,
\end{align*}
for any $(x_0,t)\in\partial M\times[0,T_0)$ with some uniform constant $C=C(T_0)>0$ independent of the choice of $(x_0,t)\in \partial M\times[0,T_0)$, where $\nabla_n$ is the outer normal derivative at $x_0\in \partial M$. For the second order normal derivative $\nabla_n^2u$, since $\text{tr}(\bar{\nabla}^2u)\geq0$, i.e.
\begin{align*}
2(n-1)\Delta u+(n-2)(n-1)|\nabla u|^2-R_g\geq0,
\end{align*}
by the estimates on the other second order derivatives, $\nabla_{n}^2u$ is bounded from below and we still need to derive an upper bound of $\nabla_{n}^2u$. Orthogonally decompose the matrix $\bar{\nabla}^2u$ at $x_0\in \partial M$ in normal and tangential directions. By the previous estimates we have
\begin{align*}
\bar{\nabla}^2u=\left(\begin{matrix}(n-1)u_{nn}&0\\ 0&u_{nn}g\big|_{\partial M}\end{matrix}\right)+O(1)
\end{align*}
with the term $|O(1)|\leq C$ for some uniform constant $C=C(T_0)>0$ and hence, as the term $u_{nn}\to +\infty$, we have
\begin{align*}
\sigma_k(\bar{\nabla}^2u)=(u_{nn})^k\,(\Lambda_{k,n}+o(1))\to +\infty,
\end{align*}
where $\Lambda_{k,n}$ is a positive constant. On the other hand, recall that
\begin{align*}
0<\frac{1}{C}\leq\sigma_k(\bar{\nabla}^2u)=\bar{\beta}_{k,n}e^{2ku_t+2ku}\leq C,
\end{align*}
for some uniform constant $C=C(T_0)>0$ on $M\times[0,T_0)$ and hence, we have that there exists a uniform constant $C=C(T_0)>0$ such that $\nabla_n^2u(x_0)\leq C$. Notice that the constant $C$ here is independent of the choice of $x_0\in \partial M$. This completes the boundary $C^2$ estimates of $u$.

\end{proof}

\begin{prop}\label{prop_global apestimatesd2}
Let $(M,g)$ and $u\in C^4(M\times[0,T_0))$ be as in Lemma \ref{lem_gradientboundaryests}. Then there exists a constant $C=C(T_0)>0$ such that for any $(x,t)\in M\times[0,T_0)$ we have
\begin{align*}
|\nabla^2u(x,t)|_g\leq C.
\end{align*}

\end{prop}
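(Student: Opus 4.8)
The plan is to combine the boundary second-order bound of Lemma \ref{lem_c2boundary1-1} with an interior bound, and to reduce the interior bound to controlling the largest eigenvalue of $\bar{\nabla}^2u$ from above. By Lemmas \ref{Lem_monotonicity} and \ref{lem_c1estimatesonT} the quantities $|u|$, $|u_t|$, $|\nabla u|$ are uniformly bounded on $M\times[0,T_0)$, so $\sigma_k(\bar{\nabla}^2u)=\bar{\beta}_{k,n}e^{2ku_t+2ku}$ is pinched between two positive constants depending only on $T_0$; since $-Ric_g$ and $|\nabla u|^2g-du\otimes du$ are bounded, an upper bound $\bar{\nabla}^2u\le Cg$ forces $(n-2)\nabla^2u+(\Delta u)g$ to have bounded eigenvalues, hence (taking traces) $\Delta u$ is bounded and therefore $\nabla^2u$ is bounded; the lower bound on $\bar{\nabla}^2u$ is automatic from $\bar{\nabla}^2u\in\Gamma_k^+$ together with the upper bound. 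Thus it suffices to bound $\lambda_1(x,t):=$ the largest eigenvalue of $\bar{\nabla}^2u(x,t)$ with respect to $g$.

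First I would run a maximum-principle argument with an auxiliary function of the form
\[
W(x,t)=\log\lambda_1(x,t)+\tfrac{a}{2}|\nabla u(x,t)|^2+b\,\psi(u(x,t)),
\]
for constants $a,b>0$ and a function $\psi$ to be chosen, depending only on $T_0$ and the $C^1$ bounds; one works on $M\times[0,T]$ for each $T<T_0$ and lets $T\uparrow T_0$. If the supremum of $W$ is attained on $(\partial M\times[0,T])\cup(M\times\{0\})$ we are done, since there $\lambda_1$ is controlled by Lemma \ref{lem_c2boundary1-1} and the $C^{4,\alpha}$-bound on $u_0$. Otherwise it is attained at an interior point $(x_0,t_0)$ with $t_0>0$. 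To handle the non-smoothness of $\lambda_1$ at a point of multiplicity one uses the standard device: choose $g$-orthonormal coordinates at $x_0$ diagonalizing $\bar{\nabla}^2u(x_0,t_0)$ with $e_1$ a top eigenvector, extend $e_1$ by parallel transport along radial geodesics, and replace $\lambda_1$ by the smooth function $\tilde\lambda_1:=\bar{\nabla}^2u(e_1,e_1)$, which satisfies $\tilde\lambda_1\le\lambda_1$ near $x_0$ with equality at $(x_0,t_0)$; then $\tilde W:=\log\tilde\lambda_1+\tfrac a2|\nabla u|^2+b\,\psi(u)$ also has an interior maximum at $(x_0,t_0)$.

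At $(x_0,t_0)$ we have $\nabla\tilde W=0$, $\nabla^2\tilde W\le0$, $\partial_t\tilde W\ge0$. Using the positivity of the tensor $\bar{Q}_{ij}$ from the proof of Lemma \ref{lem_c1estimatesonT} and the structure of the linearized operator $(\ref{equn_linearoperator1})$, the inequality $0\ge\bar{Q}^{ij}\nabla_i\nabla_j\tilde W-2k\,\partial_t\tilde W$ holds there. Expanding this via the definition of $\tilde W$, and differentiating $(\ref{equn_flowsigmak})$ once and twice in the $e_1$-direction while commuting covariant derivatives — which, exactly as in the proof of Lemma \ref{lem_c1estimatesonT}, produces only $\mathrm{Rm}\ast\nabla^2u$, $\nabla\mathrm{Rm}\ast\nabla u$ and $\nabla Ric$ error terms controlled by the $C^1$ bounds — one obtains a lower bound for $\bar{Q}^{ij}\nabla_i\nabla_j(\log\tilde\lambda_1)-2k\partial_t(\cdot)$. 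The crucial gain is the concavity of $\log\sigma_k$ on $\Gamma_k^+$, which yields $-F^{ij,kl}\nabla_1(\bar{\nabla}^2u)_{ij}\nabla_1(\bar{\nabla}^2u)_{kl}\ge0$ and lets the third-order terms be absorbed (after Cauchy–Schwarz) against the positive term coming from $\tfrac a2\bar{Q}^{ij}\nabla_i\nabla_j|\nabla u|^2\ge a\,\bar{Q}^{ij}(\nabla^2u)^2_{ij}-C\sum_i\bar{T}_{ii}$; when $\lambda_1$ is large, $\bar{Q}^{ij}(\nabla^2u)^2_{ij}\gtrsim\lambda_1\sum_i\bar{T}_{ii}$ and, with $\sum_i\bar{T}_{ii}\ge c>0$ from $(\ref{inequn_structurebounda1})$, this dominates; a suitable choice of $\psi$, $a$, $b$ then swallows the remaining $-C\sum_i\bar{T}_{ii}$, the constant error, and the time-derivative contributions, forcing $\lambda_1(x_0,t_0)\le C(T_0)$ and a contradiction if $\lambda_1$ were too large. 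This gives the interior estimate and, with Lemma \ref{lem_c2boundary1-1}, the Proposition. \emph{The main obstacle} is exactly this last bookkeeping: ensuring the third-order errors generated by differentiating the gradient terms $(n-2)(|\nabla u|^2g-du\otimes du)$ inside $\bar{\nabla}^2u$ are absorbed by the concavity term and the $\bar{Q}^{ij}(\nabla^2u)^2_{ij}$ term, and fixing the auxiliary functions so that every lower-order contribution, including those from $\partial_t$ of the equation, has the right sign.
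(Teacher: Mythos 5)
Your proposal is essentially the paper's own argument: the paper maximizes $h(x,e_x,t)=(\nabla^2u+m|\nabla u|^2g)(e_x,e_x)$ over the unit tangent bundle (handling the eigenvalue's non-smoothness by extending $e_q$ to a vector field with $\nabla e(q)=0$, exactly your device), contracts the second-derivative test with the same tensor $\bar Q_{ij}$, uses concavity of $\log\sigma_k$ to discard the $F^{ij,kl}$ term, and absorbs the $-2(n-2)u_{x^1x^i}u_{x^1x^j}\bar T_{ij}/\sigma_k$-type quadratic errors into the good term $2m\bar Q_{ij}u_{x^ax^i}u_{x^ax^j}$ by taking $m>n-2$, with $\sum_i\bar T_{ii}\geq c>0$ from $(\ref{inequn_structurebounda1})$ and Lemma \ref{lem_c2boundary1-1} covering the boundary case. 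The only cosmetic differences are that the paper works additively with $\nabla^2u(e,e)$ rather than with $\log\lambda_1(\bar\nabla^2u)$ and needs no auxiliary $\psi(u)$; your reduction from a bound on $\bar\nabla^2u$ to a bound on $\nabla^2u$ via the trace is also implicit in the paper.
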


\begin{proof}
The proof is a modification of Proposition 3.3 in \cite{LS}, see also \cite{GV}. We have obtained the global $C^1$ estimates and boundary $C^2$ estimates on $u$. Now suppose the maximum of $|\nabla^2u|_g$ is achieved at a point in the interior.

Denote $S(TM)$ the unit tangent bundle of $(M,g)$. We define a function $h:\,S(TM)\times[0,T_0)\to\mathbb{R}$, such that
\begin{align*}
h(x,e_x,t)=(\nabla^2u+m|\nabla u|^2g)(e_x,e_x),
\end{align*}
for any $x\in M$, $t\in[0,T_0)$ and $e_x\in ST_xM$, with $m>1$ a constant to be fixed. Suppose there exist $(q, t_1)\in M^{\circ}\times [0,T_0)$ and a unit tangent vector $e_q\in ST_{q}M$ such that
\begin{align*}
h(q,e_q,t_1)=\sup_{S(TM)\times[0,t_1]}h.
\end{align*}
Notice that on $S(TM)\subseteq S(TM_1)$ (here $(M_1,g_1)$ is the extension of $(M,g)$ as in Section \ref{section_sub1.1}), we can find a uniform constant $C'>0$ and a uniform small constant $\delta_0>0$ such that for any $x\in M$ and any $e_x \in T_xM_1$, $e_x$ can be extended to a unit vector field $e$ on $B_{\delta_0}(x)\subseteq M_1^{\circ}$ such that $\nabla e(x)=0$ and $|\nabla^2e|(x)\leq C'$ at this point $x$. Take the geodesic normal coordinates $(x^1,...,x^n)$ at $q$, and hence we have $\Gamma_{ab}^c(q)=0$ and $g_{ij}(q)=\delta_{ij}$. By rotating, we assume $\nabla^2u=u_{x^ix^j}$ is diagonal at $q$ and $e_q=\frac{\partial}{\partial x^1}$ at $(q,t_1)$. Let the unit vector field $e=\sum_i\xi^i\frac{\partial}{\partial x^i}$ be the extension of $e_q$ on $B_{\delta_0}(q)$ with $\nabla e(q)=0$ and $|\nabla^2 e|(q)\leq C'$. We have
\begin{align*}
\xi^1(q)=1,\,\,\xi^i(q)=0,\,i\geq1,\,\,\,\,\text{and}\,\,\,
\frac{\partial}{\partial x^i}\xi^j(q)=0,\,\,\,i,j=1,...,n.
\end{align*}
It is clear that the fact $\bar{\nabla}^2u\in \Gamma_{k}^+$ and the uniform bound of $|\nabla u|$ on $M\times[0,T_0)$ imply that there exists a uniform constant $C>-\infty$ such that $\nabla_1^2u>C$ at $(q,t_1)$.  Now we define a function $\tilde{h}$ in a small neighborhood $U\times [t_1-\epsilon, t_1+\epsilon]$ of $(q,t_1)$ such that
\begin{align*}
\tilde{h}(x,t)=(\nabla^2u+m|\nabla u|^2g)(e,e)=\xi^i\xi^j(u_{x^ix^j}-\Gamma_{ij}^au_{x^a})+m|\nabla u|^2.
\end{align*}

Since $\tilde{h}$ achieves its maximum in $U\times[t_1-\epsilon, t_1]$ at $(q,t_1)$, we have that at $(q,t_1)$,
\begin{align}
\label{inequn_innertd1}&\frac{\partial}{\partial t}\tilde{h}=u_{x^1x^1t}+2mu_{x^a}u_{x^at}\geq0,\\
&\label{inequn_innerxid1}\tilde{h}_{x^i}=u_{x^1x^1x^i}-\frac{\partial}{\partial x^i}\Gamma_{11}^au_{x^a}+2mu_{x^ax^i}u_{x^a}=0,\\
&0\geq \tilde{h}_{x^ix^j}=u_{x^1x^1x^ix^j}-\frac{\partial^2}{\partial x^j \partial x^i}\Gamma_{11}^au_{x^a}-\frac{\partial}{\partial x^i}\Gamma_{11}^au_{x^ax^j}-\frac{\partial}{\partial x^j}\Gamma_{11}^au_{x^ax^i}+m\frac{\partial^2}{\partial x^i \partial x^j}g^{ab}u_{x^a}u_{x^b}\nonumber\\
&+2mu_{x^ax^ix^j}u_{x^a}+2mu_{x^ax^i}u_{x^ax^j}+2\frac{\partial^2}{\partial x^i \partial x^j}\xi^au_{x^ax^1},\nonumber
\end{align}
where the last inequality means the Hessian of $\tilde{h}$ is non-positive. Contracting the Hessian of $\tilde{h}$ and the positively definite tensor $\bar{Q}_{ij}\equiv\frac{1}{\sigma_k(\bar{\nabla}^2u)}((n-2)\bar{T}_{ij}+\text{tr}(\bar{T}_{k-1})g_{ij})$ we have at $(q,t_1)$
\begin{align}
0\geq &\,\,\bar{Q}_{ij}u_{x^1x^1x^ix^j}-\bar{Q}_{ij}\frac{\partial^2}{\partial x^j x^i}\Gamma_{11}^au_{x^a}-2\bar{Q}_{ij}\frac{\partial}{\partial x^j}\Gamma_{11}^au_{x^ax^i}+m\bar{Q}_{ij}\frac{\partial^2}{\partial x^i \partial x^j}g^{ab}u_{x^a}u_{x^b}\nonumber\\
&\label{inequn_2d2d2}+2m\bar{Q}_{ij}u_{x^ax^ix^j}u_{x^a}+2m\bar{Q}_{ij}u_{x^ax^i}u_{x^ax^j}+2\bar{Q}_{ij}\frac{\partial^2}{\partial x^i \partial x^j}\xi^au_{x^ax^1}.
\end{align}
Differentiating equation $(\ref{equn_flowsigmak})$ with respect to $x^a$ yields
\begin{align*}
2ku_{x^at}+2ku_{x^a}=\frac{1}{\sigma_k(\bar{\nabla}^2u)}\bar{T}_{ij}[&-\nabla_aRic_{ij}+(n-2)\nabla_a\nabla_{ij}^2u+(\Delta u)_{x^a}g_{ij}\\
&+(n-2)(2\nabla_a\nabla_b u\nabla_b ug_{ij}-2\nabla_a\nabla_i u\nabla_ju)].
\end{align*}
Define the function $F(r_{ij})=\log(\sigma_k(r_{ij}))$ on $\Gamma_k^+$. Differentiating $(\ref{equn_flowsigmak})$ twice, we obtain
\begin{align*}
2k\nabla_1^2u_{t}&=(\frac{\partial^2F}{\partial r_{ab} \partial r_{ij}})\nabla_1(\bar{\nabla}^2u)_{ab}\nabla_1(\bar{\nabla}^2u)_{ij}+\frac{1}{\sigma_k(\bar{\nabla}^2u)}\bar{T}_{ij}[-\nabla_1^2Ric_{ij}+(n-2)\nabla_1^2\nabla_{ij}^2u
+\nabla_1^2(\Delta u)g_{ij}\\
&+2(n-2)((<\nabla_1^2\nabla u,\nabla u>+\nabla_1\nabla_a u\nabla_1\nabla_a u)g_{ij}-\nabla_1^2\nabla_i u\nabla_ju-\nabla_1\nabla_i u\nabla_1\nabla_ju)]-2k\nabla_1^2u\\
&\leq\frac{1}{\sigma_k(\bar{\nabla}^2u)}\bar{T}_{ij}[2(n-2)\big( \,(<\nabla_1^2\nabla u,\nabla u>+\nabla_1\nabla_a u\nabla_1\nabla_a u)g_{ij}-\nabla_1^2\nabla_i u\nabla_ju-\nabla_{1i}^2 u\nabla_{1j}^2u\big)\\
&-\nabla_1^2Ric_{ij}+(n-2)\nabla_1^2\nabla_{ij}^2u
+\nabla_1^2(\Delta u)g_{ij}]-2k\nabla_1^2u,
\end{align*}
since $F$ is concave on $\Gamma_k^+$. In particular, at $(q,t_1)$ we rewrite these two derivatives as
\begin{align}\label{inequn_flowtwoderivativesd12}
2k(u_{x^at}&+u_{x^a})=\bar{Q}_{ij}(u_{x^ix^jx^a}-\frac{\partial}{\partial x^a}\Gamma_{ij}^bu_{x^b})+\frac{\bar{T}_{ij}}{\sigma_k(\bar{\nabla}^2u)}[-\nabla_aRic_{ij}+2(n-2)(u_{x^ax^b}u_{x^b}g_{ij}-u_{x^ix^a}u_{x^j})],\\
2ku_{x^1x^1t}&\leq \frac{\bar{T}_{ij}}{\sigma_k(\bar{\nabla}^2u)}[2(n-2)\big( \,(u_{x^1x^1x^a}u_{x^a}-\frac{\partial}{\partial x^1}\Gamma_{1a}^bu_{x^b}u_{x^a}+u_{x^1x^a}u_{x^1x^a})\delta_{ij}-u_{x^1x^1x^i}u_{x^j}+\frac{\partial}{\partial x^1}\Gamma_{1i}^bu_{x^b}u_{x^j}\nonumber\\
- u_{x^1x^i}&u_{x^1x^j}\big)-\nabla_1^2Ric_{ij}]+\bar{Q}_{ij}[u_{x^ix^jx^1x^1}-\frac{\partial^2}{\partial (x^1)^2}\Gamma_{ij}^au_{x^a}-2\frac{\partial}{\partial x^1}\Gamma_{ij}^au_{x^1x^a}-2\frac{\partial}{\partial x^1}\Gamma_{1i}^au_{x^ax^j}]-2ku_{x^1x^1},\nonumber
\end{align}
and hence combining with $(\ref{inequn_2d2d2})$, we have
\begin{align*}
0\geq &\,\,\bar{Q}_{ij}(\frac{\partial^2}{\partial (x^1)^2}\Gamma_{ij}^au_{x^a}+2\frac{\partial}{\partial x^1}\Gamma_{ij}^au_{x^ax^1}+2\frac{\partial}{\partial x^1}\Gamma_{1i}^au_{x^ax^j}-\frac{\partial^2}{\partial x^j \partial x^i}\Gamma_{11}^au_{x^a}-2\frac{\partial}{\partial x^j}\Gamma_{11}^au_{x^ax^i})\\
&-\frac{\bar{T}_{ij}}{\sigma_k(\bar{\nabla}^2u)}[2(n-2)\big( \,(u_{x^1x^1x^a}u_{x^a}-\frac{\partial}{\partial x^
1}\Gamma_{1a}^bu_{x_b}u_{x^a}+u_{x^1x^a}u_{x^1x^a})\delta_{ij}-u_{x^1x^1x^
i}u_{x^j}- u_{x^1x^i}u_{x^1x^j}\\
&+\frac{\partial}{\partial x^1}\Gamma_{1i}^bu_{x^b}u_{x^j})
-\nabla_1^2\text{Ric}_{ij}]+m\bar{Q}_{ij}(\frac{\partial^2}{\partial x^i \partial x^j}g^{ab}u_{x^a}u_{x^b}
+2\frac{\partial}{\partial x^a}\Gamma_{ij}^bu_{x^b}u_{x^a}+2u_{x^ax^i}u_{x^ax^j}) \\
&+2mu_{x^a}(2ku_{x^at}+2ku_{x^a}+\frac{\bar{T}_{ij}}{\sigma_k(\bar{\nabla}^2u)}(\nabla_aRic_{ij}-2(n-2)(u_{x^ax^b}u_{x^b}g_{ij}-u_{x^ax^i}u_{x^j})))\\
&+2k(u_{x^1x^1t}+u_{x^1x^1})+2\bar{Q}_{ij}\frac{\partial^2\xi^a}{\partial x^i \partial x^j}u_{x^ax^1}.
\end{align*}
Therefore, by $(\ref{inequn_innertd1})$ and $(\ref{inequn_innerxid1})$ we have
\begin{align*}
0\geq &\,\,\bar{Q}_{ij}(\frac{\partial^2}{\partial (x^1)^2}\Gamma_{ij}^au_{x^a}+2\frac{\partial}{\partial x^1}\Gamma_{ij}^au_{x^ax^1}+2\frac{\partial}{\partial x^1}\Gamma_{1i}^au_{x^ax^j}-\frac{\partial^2}{\partial x^j \partial x^i}\Gamma_{11}^au_{x^a}-2\frac{\partial}{\partial x^j}\Gamma_{11}^au_{x^ax^i})-4kmu_{x^a}u_{x^at}\\
&+2ku_{x^1x^1}-\frac{\bar{T}_{ij}}{\sigma_k(\bar{\nabla}^2u)}[2(n-2)\big( \,(\frac{\partial}{\partial x^a}\Gamma_{11}^bu_{x^b}u_{x^a}-2mu_{x^bx^a}u_{x^b}u_{x^a}-\frac{\partial}{\partial x^1}\Gamma_{1a}^bu_{x^b}u_{x^a}+u_{x^1x^a}u_{x^1x^a})\delta_{ij}\\
&-\frac{\partial}{\partial x^i}\Gamma_{11}^bu_{x^b}u_{x^j}+2mu_{x^bx^i}u_{x^b}u_{x^j}- u_{x^1x^i}u_{x^1x^j}
+\frac{\partial}{\partial x^1}\Gamma_{1i}^bu_{x^b}u_{x^j})-\nabla_1^2\text{Ric}_{ij}]\\
&+m\bar{Q}_{ij}(2R_{iajb}u_{x^a}u_{x^b}+2u_{x^ax^i}u_{x^ax^j})+2\bar{Q}_{ij}\frac{\partial^2\xi^a}{\partial x^i \partial x^j}u_{x^ax^1}\\
&+2mu_{x^a}\big(2ku_{x^at}+2ku_{x^a}+\frac{\bar{T}_{ij}}{\sigma_k(\bar{\nabla}^2u)}(\nabla_aRic_{ij}-2(n-2)(u_{x^ax^b}u_{x^b}g_{ij}-u_{x^ax^i}u_{x^j}))\big)\\
=&\,\,\bar{Q}_{ij}((\nabla_1R_{ai1j}-\nabla_iR_{a1j1})u_{x^a}-2R_{a1j1}u_{x^ax^i})+2ku_{x^1x^1}+2\bar{Q}_{ij}(\nabla_{ij}^2\xi^a+R_{ai1j})u_{x^ax^1}\\
&-\frac{\bar{T}_{ij}}{\sigma_k(\bar{\nabla}^2u)}[2(n-2)\big( \,(R_{b1a1}u_{x^b}u_{x^a}-2mu_{x^bx^a}u_{x^b}u_{x^a}+u_{x^1x^a}u_{x^1x^a})\delta_{ij}\\
&-R_{b1i1}u_{x^b}u_{x^j}+2mu_{x^bx^i}u_{x^b}u_{x^j}- u_{x^1x^i}u_{x^1x^j}
)-\nabla_1^2\text{Ric}_{ij}] +m\bar{Q}_{ij}(2R_{iajb}u_{x^a}u_{x^b}+2u_{x^ax^i}u_{x^ax^j}) \\
&+2mu_{x^
a}\big(2ku_{x^a}+\frac{\bar{T}_{ij}}{\sigma_k(\bar{\nabla}^2u)}(\nabla_aRic_{ij}-2(n-2)(u_{x^ax^b}u_{x^b}g_{ij}-u_{x^ax^i}u_{x^j}))\big).
\end{align*}
By assumption, we have at $(q,t_1)$, $u_{x^ix^i}\leq u_{x^1x^1}$ for $i\geq 2$ and $u_{x^ix^j}=0$ for $i\neq j$. Recall that there exists a unique $C>-\infty$ on $M\times[0,T_0)$ such that $u_{x^1x^1}=\nabla_1^2u>C$ at $(q,t_1)$ and hence, we have
\begin{align*}
0\geq &-C- Cu_{x^1x^1}-(1+m)(Cu_{x^1x^1}+C)\sum_i\bar{T}_{ii}+\frac{1}{\sigma_k(\bar{\nabla}^2u)}[(2m-2(n-2))u_{x^1x^1}^2\sum_i\bar{T}_{ii}\\
&+2(n-2)(1+m)u_{x^1x^i}u_{x^1x^j}\bar{T}_{ij}]\\
\geq &-C- Cu_{x^1x^1}-(1+m)(Cu_{x^1x^1}+C)\sum_i\bar{T}_{ii}+\frac{1}{\sigma_k(\bar{\nabla}^2u)}(2m-2(n-2))u_{x^1x^1}^2\sum_i\bar{T}_{ii},
\end{align*}
where $C>0$ is a uniform constant on $M\times[0,T_0)$ depending on $k,\,n,\,C'$, $(M,g)$ and
\begin{align*}
\sup_{M\times[0,T_0)}(|u|+|u_t|+|\nabla u|+|Rm|+|\nabla Rm|+|\nabla^2Ric|).
\end{align*}
Now take $m$ to be a constant strictly larger than $(n-2)$. Recall that $\sigma_k(\bar{\nabla}^2u)$ is uniformly bounded from above and below. On the other hand, by $(\ref{inequn_structurebounda1})$, $\sum_i\bar{T}_{ii}>C$ for some uniform constant $C>0$ on $M\times[0,T_0)$, and hence we obtain that there exists a uniform constant $C>0$ on $M\times[0,T_0)$, such that
\begin{align*}
u_{x^1x^1}\leq C
\end{align*}
at $(q,t_1)$.  Therefore, combining with the boundary $C^2$ estimates, we have that there exists a uniform constant $C>0$ on $M\times[0,T_0)$, such that
\begin{align*}
|\nabla^2 u|\leq C
\end{align*}
on $M\times[0,T_0)$.

\end{proof}

Remark. Here we give a way to extend the unit vector $e_q$ at $q\in M \subseteq M_1$ in Proposition \ref{prop_global apestimatesd2} to a unit vector field $e$ in a neighborhood of $q$ with $|\nabla^2e|(q)\leq C'$ for some $C'>0$ independent of $q\in M$. Under the normal coordinates $(x^1,...,x^n)$ in $B_{\delta}(q)$ at $q$, $\Gamma_{ij}^m(0)=0$ and $g_{ij}(0)=\delta_{ij}$. Let $\tilde{e}(x)=\frac{\partial}{\partial x^1}$ for $x\in B_{\delta}(0)$, where $\delta>0$ is less than the uniform lower bound of the injectivity radius of the points $q\in M$ in $(M_1,g_1)$. Let $e(x)\equiv \xi^j\frac{\partial}{\partial x^j}=\frac{\tilde{e}(x)}{|\tilde{e}(x)|_g}$ for $x\in B_{\delta}(q)$. Since
\begin{align*}
\nabla_i \tilde{e}^j\big|_{x=0}=\frac{\partial \tilde{e}^j}{\partial x^i}=0
\end{align*}
at $x=0$ (at $q$), we have
\begin{align*}
\nabla_i\xi^j=\partial_i(\frac{\tilde{e}^j}{|\tilde{e}|_g})=\frac{\partial_i\tilde{e}^j}{|\tilde{e}|}-\frac{\partial_i\tilde{e}^a\tilde{e}_a\tilde{e}^j}{|\tilde{e}|^3}=0,
\end{align*}
at the point $q$. Therefore, the extension $\xi$ of $e_q$ in $B_{\delta}(q)$ is a unit vector field with $\nabla_i\xi^j(q)=0$. It is easy to see that there exists a uniform constant $C>0$ depending on the lower bound of the injectivity radius and upper bound of the norm of the curvature for points in $M$ in $(M_1,g_1)$, such that $|\nabla^2\xi(q)|\leq C$, for the extension $e$ of $e_q$ defined above.

\section{Convergence of the $\sigma_k$-Ricci curvature flow}\label{section_convergenceoftheflow}

Now we can prove the long time existence of the flow.
\begin{thm}\label{thm_longtimeexistence}
Assume $(M^n, g)$ is  a compact manifold with boundary of $C^{4,\alpha}$, and $(M,g)$ is either a compact domain in $\mathbb{R}^n$ or with Ricci curvature
 $Ric_g\leq -\delta_0g$ for some $\delta_0\geq (n-1)$. Assume $u_0\in C^{4,\alpha}(M)$ is a subsolution to $(\ref{equn_sigmak})$
    satisfying $(\ref{inequn_c4compatiblebdincreasing})$ at the points $x\in \partial M$ where $v(x)=0$.
    Also, assume $\phi\in C^{4+\alpha,2+\frac{\alpha}{2}}(\partial M\times[0,T_1])$ for all $T_1>0$, $\phi_t(x,t)\geq 0$ on $\partial M\times[0,+\infty)$ and
     $\phi$ satisfies the compatible condition $(\ref{equn_compatibleequations})$ with $u_0$. There exists a unique solution $u\in C^{4,2}(M\times [0,+\infty))$ to the Cauchy-Dirichlet problem $(\ref{equn_flowsigmak})-(\ref{equn_flowboundarydata})$ such that
      $u\in C^{4+\alpha,2+\frac{\alpha}{2}}(M\times[0,T])$ for all $T>0$, and the equation $(\ref{equn_flowsigmak})$ is uniformly parabolic in $t\in[0,T]$ for any $T>0$.
\end{thm}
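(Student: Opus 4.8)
The plan is to combine the a priori estimates of Section~\ref{section_C2estimates} with the classical continuation method for nonlinear parabolic equations. First I would establish short-time existence: since $\bar{\nabla}^2u_0\in\Gamma_k^+$, the right-hand side of $(\ref{equn_flowsigmak})$ is a concave parabolic operator in a neighbourhood of $u_0$, and its linearization at $u_0$ has leading part $\frac{1}{\sigma_k(\bar{\nabla}^2u_0)}\bar{T}_{k-1}^{ij}[(n-2)\nabla_i\nabla_j\cdot+\Delta\cdot\,g_{ij}]$, whose eigenvalues, computed in an eigenbasis of $\bar{\nabla}^2u_0$ to be $\frac{1}{\sigma_k}((n-2)T_i+(n-k+1)\sigma_{k-1})$, are bounded below by $(n-k+1)\sigma_{k-1}/\sigma_k>0$ thanks to $T_i\geq0$ and the Newton--Maclaurin inequality. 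Thus the linearized Cauchy--Dirichlet problem is uniformly parabolic, and since $\phi$ satisfies the compatibility conditions $(\ref{equn_compatibleequations})$ with $u_0$ through second order, the inverse function theorem in parabolic Hölder spaces (using solvability of that linear problem) yields a solution $u\in C^{4+\alpha,2+\frac{\alpha}{2}}(M\times[0,T_*])$ for some $T_*>0$. Uniqueness, on any interval where two solutions exist, is immediate from the comparison principle Lemma~\ref{lem_comparison}, since each solution is both a sub- and a super-solution.

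Next I would let $[0,T_{\max})$ be the maximal interval of existence in $C^{4+\alpha,2+\frac{\alpha}{2}}$ on compact subintervals, and argue by contradiction that $T_{\max}<\infty$ is impossible. On $[0,T_{\max})$, Lemma~\ref{Lem_monotonicity}, Lemma~\ref{lem_c1estimatesonT} and Proposition~\ref{prop_global apestimatesd2} give a uniform bound $|u|+|\nabla u|+|\nabla^2u|\leq C(T_{\max})$. The crucial point is that the flow is then \emph{uniformly} parabolic on $M\times[0,T_{\max})$: because $u_t\geq0$, $u$ is bounded, and $u_t$ is bounded above (Lemma~\ref{Lem_monotonicity}), the identity $\sigma_k(\bar{\nabla}^2u)=\bar{\beta}_{k,n}e^{2ku_t+2ku}$ gives $0<c_0\leq\sigma_k(\bar{\nabla}^2u)\leq C$, whence Newton--Maclaurin yields $\sigma_{k-1}(\bar{\nabla}^2u)\geq c_1>0$ as in $(\ref{inequn_structurebounda1})$, while the $C^2$ bound gives $\sigma_{k-1}\leq C$; the eigenvalues of $\frac{1}{\sigma_k}\bar{T}_{k-1}^{ij}$ therefore lie between two positive constants depending only on $T_{\max}$.

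Finally, with uniform parabolicity and a uniform $C^2$ bound in hand, I would invoke the parabolic Evans--Krylov estimate, valid because $r\mapsto\log\sigma_k(r)$ is concave on $\Gamma_k^+$, in its interior and boundary forms, the latter using the regularity of $\phi$ and the barriers of Lemmas~\ref{lem_gradientboundaryests} and~\ref{lem_c2boundary1-1} together with the compatibility conditions near the corner $\partial M\times\{0\}$, to obtain a uniform $C^{2+\alpha,1+\frac{\alpha}{2}}$ bound on $M\times[0,T_{\max}]$. Differentiating $(\ref{equn_flowsigmak})$ in the spatial variables then produces linear uniformly parabolic equations for $\nabla u$ with Hölder coefficients and right-hand sides (here $g\in C^{4,\alpha}$ is used), so parabolic Schauder theory, iterated and carried up to the boundary, upgrades this to a uniform $C^{4+\alpha,2+\frac{\alpha}{2}}$ bound on $M\times[0,T_{\max}]$. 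Then $u(\cdot,T_{\max})\in C^{4,\alpha}(M)$ with $\bar{\nabla}^2u(\cdot,T_{\max})\in\Gamma_k^+$, and since the PDE supplies the compatibility data at $t=T_{\max}$, the short-time result restarted there extends the solution, contradicting maximality; hence $T_{\max}=+\infty$, and the solution has the asserted regularity and uniform parabolicity on each $M\times[0,T]$. I expect the main obstacle to be this last step, namely the interior and up-to-the-boundary $C^{2+\alpha,1+\frac{\alpha}{2}}$ estimates for the concave uniformly parabolic equation handled compatibly with the spatial boundary and the initial corner, everything else being either contained in Section~\ref{section_C2estimates} or routine.
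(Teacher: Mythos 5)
Your proposal is correct and follows essentially the same route as the paper: short-time existence from parabolicity at $t=0$ plus the compatibility conditions, uniqueness from Lemma \ref{lem_comparison}, uniform parabolicity on finite time intervals via the identity $\sigma_k(\bar{\nabla}^2u)=\bar{\beta}_{k,n}e^{2ku_t+2ku}$ combined with the bounds of Lemmas \ref{Lem_monotonicity}--\ref{lem_c1estimatesonT} and Proposition \ref{prop_global apestimatesd2}, then Krylov's $C^{2,\alpha}$ estimate for concave uniformly parabolic equations followed by Schauder theory, and finally continuation. Your write-up is merely more explicit than the paper's about the maximal-interval/restart mechanism and about where concavity and the structure $(n-2)\bar{T}_{k-1}^{ij}+\mathrm{tr}(\bar{T}_{k-1})g^{ij}$ enter, but there is no substantive difference in approach.
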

\begin{proof}
Since $u_0$ is a subsolution to $(\ref{equn_sigmak})$, the equation is strictly parabolic at $t=0$. By the compatibility condition of $\phi$ and $u_0$, the implicit function theorem yields that there exists $T_0>0$ such that the flow is parabolic on $M\times[0,T_0)$ and the Cauchy-Dirichlet problem has a unique solution $u\in C^{4,2}(M\times[0,T_0))$ such that $u\in C^{4+\alpha,2+\frac{\alpha}{2}}(M\times[0,t_1])$ for any $t_1\in(0,T_0)$. Recall that
\begin{align*}
\sigma_k(\bar{\nabla}^2u)=\bar{\beta}_{k,n} e^{2ku_t+2ku}\geq \bar{\beta}_{k,n} e^{2ku},
\end{align*}
with the right hand side increasing by Lemma \ref{Lem_monotonicity}. Also, Lemma \ref{Lem_monotonicity} gives the uniform upper and lower bounds of $u$ on $M\times[0,T_0)$. By the a priori estimates in Lemma \ref{lem_c1estimatesonT} and Proposition \ref{prop_global apestimatesd2}, we have $\bar{\nabla}^2u\in \Gamma_k^+$ and the equation is uniformly parabolic, and hence Krylov Theorem for fully nonlinear parabolic equations yields uniform $C^{2, \alpha_{T_0}}(M)$ estimates on $u$ with some constant $0<\alpha_{T_0}<1$ for $t\in[0,T_0)$, see \cite{Krylov}. In turn the Schauder estimates yield uniform $C^{4+\alpha,2+\frac{\alpha}{2}}$ estimates on $u$ in $M\times[0,T_0)$.
Also, these a priori estimates apply to $u$ on $M\times[0,T]$ for any $T>0$ with the corresponding constants depending on $T$, and classical parabolic equation theory applies to extend the flow to $M\times[0,+\infty)$ and $u\in C^{4+\alpha, 2+\frac{\alpha}{2}}(M\times[0,T])$ for all $T>0$.
  This completes the proof of the long time existence of the flow.

\end{proof}

To show the convergence of the flow, we establish the $C^1$ and $C^2$ interior estimates on $u$ based on the bound $\sup_{U\times[0,+\infty)}|u|$ for any compact subset $U\subseteq M^{\circ}$. 

\begin{lem}\label{lem_interiorc1estimates2}
Assume $u\in C^{4,2}(M\times[0,+\infty))$ is a solution to the Cauchy-Dirichlet boundary value problem of the equation $(\ref{equn_sigmak})$ with $u_t\geq0$. Assume that for any compact subset $U\subseteq M^{\circ}$, there exists a constant $C_0=C_0(U)>0$ such that
\begin{align*}
|u|\leq C_0
\end{align*}
on $U\times[0,+\infty)$. Also, for some $T>0$, we assume that there exists a constant $C=C(T)>0$ such that
\begin{align*}
|u|+|\nabla u|\leq C(T)
\end{align*}
on $M\times[0,T]$. Then for a point $q_1\in M^{\circ}$,  there exists a constant $C_1>0$ depending on $B_{\frac{3r}{4}}(q_1)$, $C_0(B_{\frac{3r}{4}}(q_1))$ and $C(T)$ such that
\begin{align*}
|\nabla u|\leq C_1
\end{align*}
on $B_{\frac{r}{2}}(q_1)\times[0,+\infty)$, where $r$ is the distance of $q_1$ to $\partial M$.
\end{lem}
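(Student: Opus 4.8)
\smallskip
\noindent\textbf{Proof plan.}
The strategy is to run the classical localized interior gradient estimate for the $\sigma_k$ equation (in the spirit of Li--Song \cite{LS} and Guan--Viaclovsky \cite{GV}, reproducing the algebra already carried out in Lemma \ref{lem_c1estimatesonT}), but with a spatial cut-off so that the resulting bound depends only on local data and is independent of $t$. Set $r=\mathrm{dist}_g(q_1,\partial M)$, write $B_\rho=B_\rho(q_1)$, and let $C_0:=C_0(B_{3r/4})$ be the bound for $|u|$ on $B_{3r/4}\times[0,\infty)$ from the hypothesis. Fix $\zeta\in C^\infty_c(B_{3r/4})$ of the form $\zeta=\chi^2$ with $0\le\chi\le 1$, $\chi\equiv 1$ on $B_{r/2}$, $|\nabla\chi|\le C/r$ and $|\nabla^2\chi|\le C/r^2$, so that $|\nabla\zeta|^2/\zeta+|\nabla^2\zeta|\le C/r^2$. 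Exactly as in Lemma \ref{lem_c1estimatesonT}, take $\eta(s)=C_1(C_2+s)^p$ on $[-C_0,\infty)$ with $C_2=1+C_0$, and then $p>0$ large and $C_1>0$ small (depending only on $C_0$, hence on $B_{3r/4}$) so that $\eta'>0$, $\eta''-(\eta')^2\ge C_1p$ and $\eta''-(\eta')^2-\eta'\ge 0$ on $[-C_0,C_0]$. Consider
\begin{align*}
w(x,t)=\zeta(x)^2\Big(1+\tfrac12|\nabla u(x,t)|^2\Big)e^{\eta(u(x,t))}
\end{align*}
on $\overline{B_{3r/4}}\times[0,\infty)$.

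For each $\tau>0$ let $(x_0,t_0)$ realize $\max_{\overline{B_{3r/4}}\times[0,\tau]}w$. If this maximum is $0$ then $\nabla u\equiv 0$ on $B_{r/2}$ and there is nothing to prove, so we may assume it is positive; then $x_0\in B_{3r/4}^{\circ}$ (where $\zeta>0$). If $t_0\le T$, the hypothesis $|u|+|\nabla u|\le C(T)$ on $M\times[0,T]$ gives $w(x_0,t_0)\le\big(1+\tfrac12C(T)^2\big)e^{\sup_{[-C_0,C_0]}\eta}$, a bound independent of $\tau$. Otherwise $t_0\in(T,\tau]$ and $(x_0,t_0)$ is a genuine interior space-time maximum, so there $\partial_t w\ge 0$, $\nabla w=0$ and $\nabla^2w\le 0$ in geodesic normal coordinates. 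At this point one contracts $\nabla^2w\le 0$ against the positive-definite tensor $\bar Q_{ij}=\sigma_k(\bar\nabla^2u)^{-1}\big((n-2)\bar T_{ij}+\mathrm{tr}(\bar T_{k-1})g_{ij}\big)$, differentiates the flow equation $(\ref{equn_flowsigmak})$ once and contracts with $\nabla u$, and uses the concavity of $r\mapsto\log\sigma_k(r)$ on $\Gamma^+_k$ — i.e.\ the computation of Lemma \ref{lem_c1estimatesonT}. The one new feature is the appearance of the cut-off terms $\zeta^{-2}|\nabla\zeta|^2$, $\zeta^{-1}\nabla^2\zeta$, $\zeta^{-1}\nabla\zeta$; since $\zeta=\chi^2$ these are $O(r^{-2})$ and they are absorbed, via Cauchy--Schwarz and by enlarging $p=p(r,C_0)$, into the good negative term $\big(\eta''-(\eta')^2\big)\bar Q_{ij}u_iu_j\ge C_1p\,\bar Q_{ij}u_iu_j$. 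Since $u_t\ge0$ and $|u|\le C_0$ on $B_{3r/4}$, one has $\sigma_k(\bar\nabla^2u)=\bar\beta_{k,n}e^{2k(u_t+u)}\ge\bar\beta_{k,n}e^{-2kC_0}>0$ (so $\sigma_k(\bar\nabla^2u)^{-1}$ is bounded above), and by Maclaurin $\mathrm{tr}(\bar T_{k-1})=(n-k+1)\sigma_{k-1}(\bar\nabla^2u)\ge c>0$; with these one arrives, after dividing by $1+\tfrac12|\nabla u|^2$, at an inequality of the form
\begin{align*}
\zeta^2|\nabla u|^2\,\mathrm{tr}(\bar T_{k-1})\le C\big(1+\mathrm{tr}(\bar T_{k-1})\big)
\end{align*}
at $(x_0,t_0)$, with $C$ depending only on $B_{3r/4}$, $C_0$, $k$, $n$ and $\sup_{B_{3r/4}}(|Ric_g|+|\nabla Ric_g|)$. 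Using $\mathrm{tr}(\bar T_{k-1})\ge c>0$ this gives $\zeta(x_0)^2|\nabla u(x_0,t_0)|^2\le C$; letting $\tau\to\infty$ and using $\zeta\equiv1$ on $B_{r/2}$ yields $|\nabla u|\le C_1$ on $B_{r/2}\times[0,\infty)$ with $C_1=C_1\big(B_{3r/4},C_0(B_{3r/4}),C(T)\big)$.

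\smallskip
\noindent\textbf{The main obstacle.}
The point where this goes beyond the classical local gradient estimate is \emph{keeping the constant $t$-independent}. Differentiating $(\ref{equn_flowsigmak})$ produces a term $-2k\,\eta'(u)\,u_t$, and through the factor $\sigma_k(\bar\nabla^2u)^{-1}=\bar\beta_{k,n}^{-1}e^{-2k(u_t+u)}$ several $e^{2ku_t}$-factors enter; these are harmless on $M\times[0,T_0)$ because Lemma \ref{Lem_monotonicity} bounds $u_t$ there, but to obtain a bound on $\overline{B_{3r/4}}\times[0,\infty)$ one needs a \emph{uniform-in-$t$ upper bound for $u_t$ (equivalently for $\sigma_k(\bar\nabla^2u)$) on $\overline{B_{3r/4}}$}. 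Such a bound has to be extracted from the local boundedness and monotonicity of $u$ (e.g.\ from a local parabolic estimate for $u_t$ on unit time-windows, using that $u(\cdot,t)$ is increasing and bounded on $B_{3r/4}$), and it is this ingredient — rather than the cut-off computation — that carries the essential content of the lemma; once it is available the estimate above closes.
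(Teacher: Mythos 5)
Your setup — a cut-off $\zeta$ supported in $B_{3r/4}(q_1)$ multiplying $(1+\tfrac12|\nabla u|^2)e^{\eta(u)}$, a maximum-point argument contracted against $\bar Q_{ij}$, and the case split on whether the maximum occurs before or after time $T$ — matches the paper's structure. But you keep the \emph{increasing} weight $\eta(s)=C_1(C_2+s)^p$ from Lemma \ref{lem_c1estimatesonT}, and you correctly observe that this leaves the term $-2k\eta'(u)u_t$ (and the factors $e^{2ku_t}$) uncontrolled on $[0,\infty)$. You then declare that the missing ingredient is a uniform-in-$t$ local upper bound for $u_t$ on $\overline{B_{3r/4}}$, to be "extracted from a local parabolic estimate for $u_t$ on unit time-windows." That step is not proved, and it cannot be supplied at this stage of the argument: the equation \eqref{equn_flowlinear1} satisfied by $u_t$ has coefficients $\bar T_{k-1}^{ij}/\sigma_k(\bar\nabla^2u)$, and any local parabolic Harnack or sup-estimate for it requires uniform parabolicity, i.e.\ two-sided bounds on $\bar\nabla^2u$ — precisely the $C^2$ control that is only obtained \emph{after} the interior gradient estimate (Lemma \ref{lem_interiorc2estimates2} and the Krylov step in the proof of Theorem \ref{thm_main}). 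So the proposal as written is circular at its acknowledged crux, and the "main obstacle" paragraph is a statement of the gap rather than a proof.

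The paper closes this gap without ever bounding $u_t$ from above, by changing the weight: it takes $f(u)=(2+u-\inf_{B_{3r/4}\times[0,\infty)}u)^{-N}$ with $N$ large, so that $f'<0$ while still $f''-\tfrac32(f')^2+3(n-2)f'>0$. Because $u_t\ge0$ by monotonicity, the dangerous term now appears with the favorable sign $-f'u_t\ge0$, and the surviving $u_t$-dependence sits inside the single expression $2k(-f'u_t+2+\tfrac12 f')\bar\beta_{k,n}e^{2k(u_t+u)}$, which is handled by a two-case argument: if $u_t\ge\tfrac12$ this whole term is positive and is simply discarded; if $u_t<\tfrac12$ the factor $e^{2ku_t}$ is trivially bounded. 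Combined with the lower bound $\sum_i\bar T_{ii}\ge c>0$ from Maclaurin's inequality, this yields $\mu|\nabla u|^2\le C$ with $C$ independent of the terminal time $T_1$. If you want to repair your write-up, replace your increasing $\eta$ by this decreasing $f$ and carry out the sign analysis; the cut-off bookkeeping you describe is otherwise fine (note the paper's cut-off satisfies $|\nabla\mu|\le b_0\mu^{1/2}$, which plays the role of your $\zeta=\chi^2$ normalization).
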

\begin{proof}
It is a modification of the interior estimates in \cite{Guan1}. For any $T_1> T$, we consider the function
\begin{align*}
F(x,t)=\mu(x)we^{f(u)}
\end{align*}
on $B_r(q_1)\times[0,T_1]$, where $w=\frac{|\nabla u|^2}{2}$, and $\mu \in C_0^2(B_{\frac{3r}{4}}(q_1))$ is a cut-off function such that
\begin{align}\label{inequn_conditioncutofffunction}
\mu=1\,\,\text{on}\,\,B_{\frac{r}{2}}(q_1),\,\,0\leq \mu\leq 1,\,\,|\nabla \mu|\leq b_0\mu^{\frac{1}{2}},\,\,|\nabla^2\mu|\leq b_0,
\end{align}
for some $b_0>0$ as defined in \cite{Guan1}, and $f(u)$ is to be determined later. By the assumption of the lemma, if $F(x,t)$ achieves its maximum on $B_{\frac{3r}{4}}(q_1)\times[0,T_1]$ at a point $(x_0, t_0)\in B_{\frac{3r}{4}}(q_1)\times [0,T]$, then $F(x,t)$ is uniformly bounded and hence
\begin{align*}
|\nabla u|\leq C
\end{align*}
on $B_{\frac{r}{2}}(q_1)\times[0,T_1]$ with a constant $C>0$ independent of $T_1$. So from now on, we assume that there exists $(x_0,t_0)\in B_{\frac{3r}{4}}(q_1)\times(T,T_1]$ such that
\begin{align*}
F(x_0,t_0)=\sup_{B_r(q_1)\times[0,T_1]}F.
\end{align*}
We choose the normal coordinate $(x^1,...,x^n)$ at $x_0$. Then at $(x_0,t_0)$, we have
\begin{align}
&\label{inequn_interiorgd1.1}\frac{w_t}{w}+f'u_t\geq0,\\
&\label{inequn_interiorgd1.2}\frac{\nabla\mu}{\mu}+\frac{\nabla w}{w}+f'\nabla u=0,\\
&\label{inequn_interiorgd1.3}\bar{T}_{ij}[\frac{\nabla_i\nabla_j \mu}{\mu}-\frac{\nabla_i\mu \nabla_j\mu}{\mu^2}+\frac{\nabla_i\nabla_j w}{w}-\frac{\nabla_iw\nabla_j w}{w^2}+f'\nabla_i\nabla_j u+f''\nabla_iu\nabla_ju]\leq0.
\end{align}
By $(\ref{inequn_interiorgd1.2})$ we have
\begin{align*}
\bar{T}_{ij}\frac{\nabla_iw\nabla_jw}{w^2}\leq 3\bar{T}_{ij}\frac{\nabla_i\mu\nabla_j\mu}{\mu^2}+\frac{3}{2}(f')^2\bar{T}_{ij}\nabla_iu\nabla_ju,
\end{align*}
and hence plugging this inequality and the definition of $w$ into $(\ref{inequn_interiorgd1.3})$ we have
\begin{align*}
&\frac{1}{w}\bar{T}_{ij}\nabla_{im}^2u\nabla_{jm}^2u+\bar{T}_{ij}(\frac{\nabla_{ij}^2\mu}{\mu}-4\frac{\nabla_i \mu \nabla_j \mu}{\mu^2})+\frac{1}{w}\bar{T}_{ij}\nabla_i\nabla_j\nabla_mu\nabla_mu\\
&+f'\bar{T}_{ij}\nabla_{ij}^2u+(f''-\frac{3}{2}(f')^2)\bar{T}_{ij}\nabla_iu\nabla_ju\leq0.
\end{align*}
Dropping the non-negative first term, changing the order of derivatives for the third order derivative term and by our choice of $\mu$, we have at $(x_0,t_0)$,
\begin{align*}
\frac{1}{w}\bar{T}_{ij}\nabla_m\nabla_i\nabla_ju\nabla_mu+f'\bar{T}_{ij}\nabla_{ij}^2u+(f''-\frac{3}{2}(f')^2)\bar{T}_{ij}\nabla_iu\nabla_ju&\leq (\frac{C}{\mu}+\frac{C}{2} w^{-1}|\nabla u|^2)\sum_{i}\bar{T}_{ii}\\
&= C(\frac{1}{\mu}+1)\sum_i\bar{T}_{ii},
\end{align*}
for some uniform constant $C>0$ depending on $b_0$ and $\sup|Rm|$ on $B_{\frac{3r}{4}}(q_1)$. Similar argument yields
\begin{align*}
\frac{1}{w}\nabla_m\Delta u\nabla_mu+f'\Delta u+(f''-\frac{3}{2}(f')^2)|\nabla u|^2\leq C(\frac{1}{\mu}+1).
\end{align*}
Combining these two inequalities and the equation $(\ref{equn_floweqnd1})$, we have
\begin{align*}
&2k(u_{x^it}u_{x_i}+|\nabla u|^2)\sigma_k(\bar{\nabla}^2u)-\bar{T}_{ab}\nabla_iu(-\nabla_iRic_{ab}+2(n-2)(\nabla_{ic}^2u\nabla_cug_{ab}-\nabla_{ia}^2u\nabla_bu))\\
\leq\,\,&-w[(n-2)\big(f'\bar{T}_{ij}\nabla_{ij}^2u+(f''-\frac{3}{2}(f')^2)\bar{T}_{ij}\nabla_iu\nabla_ju\big)+\big(f'\Delta u+(f''-\frac{3}{2}(f')^2)|\nabla u|^2\big)\sum_i\bar{T}_{ii}]\\
&+w(\frac{C}{\mu}+C)\sum_i\bar{T}_{ii}.
\end{align*}
Substituting $(\ref{inequn_interiorgd1.1})$, $(\ref{inequn_interiorgd1.2})$ and the following identity into this inequality
\begin{align*}
\bar{T}_{ab}\bar{\nabla}_{ab}u=\bar{T}_{ab}(-Ric_{ab}+(n-2)\nabla_{ab}^2u+\Delta u g_{ab}+(n-2)(|\nabla u|^2g_{ab}-\nabla_au\nabla_bu))=k\sigma_k(\bar{\nabla}^2u),
\end{align*}
we have at $(x_0,t_0)$,
\begin{align*}
&2k(-f'u_tw+|\nabla u|^2)\sigma_k(\bar{\nabla}^2u)-C|\nabla u|\sum_i\bar{T}_{ii}\\
&+2(n-2)w\bar{T}_{ij}[(\frac{\nabla_c\mu\nabla_cu}{\mu}+f'|\nabla u|^2)g_{ij}-(\frac{\nabla_i\mu\nabla_ju}{\mu}+f'\nabla_iu\nabla_ju)]\\
&\leq \,-w[(n-2)(f''-\frac{3}{2}(f')^2)\bar{T}_{ij}\nabla_iu\nabla_ju+(f''-\frac{3}{2}(f')^2)|\nabla u|^2\sum_i\bar{T}_{ii}]\\
&-kwf'\sigma_k(\bar{\nabla}^2u)+f'w\bar{T}_{ab}(-Ric_{ab}+(n-2)(|\nabla u|^2g_{ab}-\nabla_au\nabla_bu))+w(\frac{C}{\mu}+C)\sum_i\bar{T}_{ii}.
\end{align*}
If $w\leq 1$ at $(x_0,t_0)$, then we obtain the uniform upper bound of $|\nabla u|$. So we assume $w>1$. Multiplying $w^{-1}$ on both sides of the inequality, and by $(\ref{equn_flowsigmak})$ we obtain
\begin{align*}
&2k(-f'u_t+2+\frac{1}{2}f')\bar{\beta}_{k,n}e^{2k(u_t+u)}-C\sum_i\bar{T}_{ii}+2(n-2)\bar{T}_{ij}[\frac{\nabla_c\mu\nabla_cu}{\mu}g_{ij}-\frac{\nabla_i\mu\nabla_ju}{\mu}]\\
& +[(n-2)(f''-\frac{3}{2}(f')^2-f')\bar{T}_{ij}\nabla_iu\nabla_ju+(f''-\frac{3}{2}(f')^2+(n-2)f')|\nabla u|^2\sum_i\bar{T}_{ii}]\\
&\leq (\frac{C}{\mu}+C)\sum_i\bar{T}_{ii},
\end{align*}
at $(x_0,t_0)$, with $C>0$ depending on $\sup(|Rm|+|\nabla Ric|)$ and $b_0$, and hence we have
\begin{align*}
&2k(-f'u_t+2+\frac{1}{2}f')\bar{\beta}_{k,n}e^{2k(u_t+u)}\\
& +[(n-2)(f''-\frac{3}{2}(f')^2-f')\bar{T}_{ij}\nabla_iu\nabla_ju+(f''-\frac{3}{2}(f')^2+(n-2)f'-b_2)|\nabla u|^2\sum_i\bar{T}_{ii}]\\
&\leq C(1+\frac{1}{b_2})(\frac{1}{\mu}+1)\sum_i\bar{T}_{ii}
\end{align*}
for some $C>0$ depending on $n$, $\sup(|Rm|+|\nabla Ric|)$ and $b_0$, where we have used the Cauchy inequality and the constant $b_2>0$ is to be determined. Now we take
\begin{align*}
f(u)=(2+u-\inf_{B_{\frac{3r}{4}}(q_1)\times[0,+\infty)}u)^{-N}
\end{align*}
for some constant $N>1$ to be fixed. Therefore,
\begin{align*}
-N2^{-N-1}\leq& f'=-N(2+u-\inf_{B_{\frac{3r}{4}}(q_1)\times[0,+\infty)}u)^{-N-1}\leq -N(2+\text{osc}u)^{-N-1}<0,\\
f''-\frac{3}{2}(f')^2+3(n-2)f'=&N[(N+1)-N(2+u-\inf_{B_{\frac{3r}{4}}(q_1)\times\mathbb{R}_+}u)^{-N}-3(n-2)(2+u-\inf_{B_{\frac{3r}{4}}(q_1)\times\mathbb{R}_+}u)]\times\\
&(2+u-\inf_{B_{\frac{3r}{4}}(q_1)\times\mathbb{R}_+}u)^{-N-2}\\
\geq & N(2+u-\inf_{B_{\frac{3r}{4}}(q_1)\times[0,+\infty)}u)^{-N-2}[(1-2^{-N})N+1-3(n-2) (2+\text{osc} u)]
\end{align*}
where
\begin{align*}
\text{osc}u=\sup_{B_{\frac{3r}{4}}(q_1)\times[0,+\infty)}(u-\inf_{B_{\frac{3r}{4}}(q_1)\times[0,+\infty)}u)\leq 2\sup_{B_{\frac{3r}{4}(q_1)}\times[0,\infty)}|u|.
\end{align*}
Now we take $N>1$ large so that
\begin{align*}
f''-\frac{3}{2}(f')^2+3(n-2)f'>0,
\end{align*}
and take $b_2=(n-2)N(2+\text{osc}u)^{-N-1}$, and hence,
\begin{align}
&\frac{2 k}{C}(-f'u_t+2+\frac{1}{2}f')\bar{\beta}_{k,n}e^{2k(u_t+u)} +|\nabla u|^2\sum_i\bar{T}_{ii}\nonumber\\
&\label{inequn_innerd1withfd1}\leq C(1+\frac{1}{b_2})(\frac{1}{\mu}+1)\sum_i\bar{T}_{ii}
\end{align}
for some $C>0$ depending on $n$, $\sup|u|$, $\sup(|Rm|+|\nabla Ric|)$ and $b_0$. Notice that if $u_t<\frac{1}{2}$, since $u_t\geq0$, and $u$ and $f'(u)$ are uniformly bounded, we have for some uniform constant $C>0$,
\begin{align*}
|\nabla u|^2\sum_i\bar{T}_{ii}\leq C(1+\frac{1}{b_2})(\frac{1}{\mu}+1)\sum_i\bar{T}_{ii}+C.
\end{align*}
On the other hand, by $(\ref{inequn_structurebounda1})$,
\begin{align*}
\sum_i\bar{T}_{ii}\geq (n-k+1)\left(\begin{matrix} n\\ k-1\end{matrix}\right)\big(\left(\begin{matrix} n\\ k\end{matrix}\right)^{-1}\bar{\beta}_{k,n}e^{2ku_t+2ku}\big)^{\frac{k-1}{k}}\geq C
\end{align*}
for a uniform $C>0$ depending on $\sup |u|$, and hence we have
\begin{align*}
\mu|\nabla u|^2\leq C
\end{align*}
at $(x_0,t_0)$ for some uniform constant $C>0$ depending on $n$, $\sup|u|$, $\sup(|Rm|+|\nabla Ric|)$ and $b_0$, independent of $T_1$. For the case $u_t\geq \frac{1}{2}$ at $(x_0,t_0)$, the first term in $(\ref{inequn_innerd1withfd1})$ is positive and hence
\begin{align*}
|\nabla u|^2\sum_i\bar{T}_{ii}\leq C(1+\frac{1}{b_2})(\frac{1}{\mu}+1)\sum_i\bar{T}_{ii},
\end{align*}
and again we have
\begin{align*}
\mu|\nabla u|^2\leq C
\end{align*}
at $(x_0,t_0)$ for some uniform constant $C>0$ depending on $n$, $\sup|u|$, $\sup(|Rm|+|\nabla Ric|)$ and $b_0$, independent of $T_1$. Therefore, by the arbitrary choice of $T_1>T$,
\begin{align*}
F(x,t)\leq F(x_0,t_0)\leq 2Ce^{2^{-N}}
\end{align*}
for $(x,t)\in[0,+\infty)$. In particular,
\begin{align*}
|\nabla u(x,t)|\leq C
\end{align*}
for $(x,t)\in B_{\frac{r}{2}}(q_1)\times[0,+\infty)$, for some uniform constant $C>0$ depending on $n$, $\displaystyle\sup_{B_{\frac{3r}{4}}(q_1)\times[0,+\infty)}|u|$, $\displaystyle\sup_{M}(|Rm|+|\nabla Ric|)$, $b_0$ and $B_{\frac{3r}{4}}(q_1)$. Therefore, for any compact subsets $U$ and $U_1$ such that $U\subseteq U_1^{\circ}\subseteq U_1\subseteq M^{\circ}$, there exists a uniform constant $C>0$ depending on $U$, $\displaystyle\sup_{U_1\times[0,+\infty)}|u|$ and $\displaystyle\sup_{M}(|Rm|+|\nabla Ric|)$ such that
\begin{align*}
|\nabla u(x,t)|\leq C+\sup_{U\times[0,T]}|\nabla u|
\end{align*}
for $(x,t)\in U\times[0,+\infty)$.

\end{proof}

Based on the interior $C^1$ estimates, the interior $C^2$ estimates are relatively easy modifications of the $C^2$ estimates in Proposition \ref{prop_global apestimatesd2}.

\begin{lem}\label{lem_interiorc2estimates2}
Assume $u\in C^{4,2}(M\times[0,+\infty))$ is a solution to the Cauchy-Dirichlet boundary value problem of the equation $(\ref{equn_sigmak})$ with $u_t\geq0$. Assume that for any compact subset $U\subseteq M^{\circ}$, there exists a constant $C_0(U)>0$ such that
\begin{align*}
|u|\leq C_0
\end{align*}
on $U\times[0,+\infty)$. Also, for some $T>0$, we assume that there exists a constant $C=C(T)>0$ such that
\begin{align*}
|\nabla^2u|\leq C(T)
\end{align*}
on $M\times[0,T]$. Then for a point $q_1\in M^{\circ}$,  there exists a constant $C'>0$ depending on $B_{\frac{3r}{4}}(q_1)$, $C_0(B_{\frac{3r}{4}}(q_1))$ and $\displaystyle\sup_{B_{\frac{3r}{4}}(q_1)\times[0,\infty)}|\nabla u|$ such that
\begin{align*}
|\nabla^2 u|\leq C'
\end{align*}
on $B_{\frac{r}{2}}(q_1)\times[0,+\infty)$, where $r$ is the distance of $q_1$ to $\partial M$.
\end{lem}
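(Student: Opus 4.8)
The plan is to run the interior maximum-principle argument of Proposition~\ref{prop_global apestimatesd2} with a spatial cut-off inserted, in exactly the way Lemma~\ref{lem_interiorc1estimates2} localizes the global gradient estimate. Fix $q_1\in M^\circ$, put $r=\mathrm{dist}_g(q_1,\partial M)$, and choose $\mu\in C_0^2(B_{3r/4}(q_1))$ as in $(\ref{inequn_conditioncutofffunction})$, so that $\mu\equiv 1$ on $B_{r/2}(q_1)$, $0\le\mu\le1$, $|\nabla\mu|\le b_0\mu^{1/2}$ and $|\nabla^2\mu|\le b_0$. Fix a constant $m>n-2$ and consider, on the portion of the unit tangent bundle over $B_{3r/4}(q_1)$ and over $[0,+\infty)$, the function
\[
H(x,e_x,t)=\mu(x)\,\bigl(\nabla^2u+m|\nabla u|^2g\bigr)(e_x,e_x).
\]
By Lemma~\ref{lem_interiorc1estimates2}, $\displaystyle\sup_{B_{3r/4}(q_1)\times[0,\infty)}|\nabla u|$ is already bounded by the data allowed in the statement, and, since $\bar\nabla^2u\in\Gamma_k^+$, this gradient bound forces $\nabla^2u(e_x,e_x)$ to be bounded below uniformly. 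Hence $H$ is bounded above over the slab $[0,T]$ by the hypothesis $|\nabla^2u|\le C(T)$, so if $\sup H$ is attained at some time $t\le T$ the conclusion is immediate. Otherwise $\sup H=H(q,e_q,t_1)$ with $q\in B_{3r/4}(q_1)$, $t_1>T$, and this is the case I treat.

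At $(q,t_1)$ I take geodesic normal coordinates, rotate so that $\nabla^2u$ is diagonal and $e_q=\partial_{x^1}$, and extend $e_q$ to a local unit vector field $e=\xi^j\partial_{x^j}$ with $\nabla e(q)=0$ and $|\nabla^2e|(q)\le C'$ as in the Remark following Proposition~\ref{prop_global apestimatesd2}. Writing $\widetilde H=\mu\bigl(\xi^i\xi^j(u_{x^ix^j}-\Gamma_{ij}^au_{x^a})+m|\nabla u|^2\bigr)$, I use the three conditions $\widetilde H_t\ge0$, $\nabla\widetilde H=0$, $\nabla^2\widetilde H\le0$ at $(q,t_1)$, contracted against the positive definite tensor $\bar Q_{ij}=\sigma_k(\bar\nabla^2u)^{-1}\bigl((n-2)\bar T_{ij}+\mathrm{tr}(\bar T_{k-1})g_{ij}\bigr)$. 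Differentiating $(\ref{equn_flowsigmak})$ once and twice, commuting covariant derivatives, and using the concavity of $F=\log\sigma_k$ on $\Gamma_k^+$ exactly as in Proposition~\ref{prop_global apestimatesd2}, the fourth-order terms cancel and one is left with an inequality of the schematic form
\[
0\ \ge\ \frac{2m-2(n-2)}{\sigma_k(\bar\nabla^2u)}\,u_{x^1x^1}^2\sum_i\bar T_{ii}\ -\ C\bigl(1+u_{x^1x^1}\bigr)\Bigl(1+\tfrac1\mu\Bigr)\sum_i\bar T_{ii}\ -\ C\bigl(1+u_{x^1x^1}\bigr),
\]
where $C$ depends only on $n$, $k$, $b_0$, the geometry of $B_{3r/4}(q_1)$ in $(M,g)$, $\sup_{B_{3r/4}(q_1)\times[0,\infty)}(|u|+|\nabla u|)$, and on the upper and lower bounds for $\sigma_k(\bar\nabla^2u)=\bar\beta_{k,n}e^{2ku_t+2ku}$ on the relevant set.

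The genuinely new point, compared with Proposition~\ref{prop_global apestimatesd2}, is the cut-off error. The first and second derivatives of $\mu$ generate terms carrying $\nabla\mu/\mu$ and $\nabla^2\mu/\mu$, and, through $\nabla\widetilde H=0$, the third-order derivatives of $u$ appearing after differentiating the flow equation are replaced (up to bounded gradient terms) by $u_{x^1x^1}\,\nabla\mu/\mu$. Using $|\nabla\mu|^2\le b_0^2\mu$, hence $|\nabla\mu|^2/\mu^2\le b_0^2/\mu$, together with Cauchy's inequality, all of these are absorbed into the sign-definite term $(2m-2(n-2))\,\sigma_k(\bar\nabla^2u)^{-1}u_{x^1x^1}^2\sum_i\bar T_{ii}$ or into the lower-order terms, exactly by the absorption scheme already used in the proofs of Lemma~\ref{lem_interiorc1estimates2} and Proposition~\ref{prop_global apestimatesd2}; this is the only delicate step, and it is where the positivity $m>n-2$ of the leading coefficient (and, if needed, an auxiliary weight $e^{f(u)}$ as in Lemma~\ref{lem_interiorc1estimates2}) enters. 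Since $\sum_i\bar T_{ii}\ge C>0$ by $(\ref{inequn_structurebounda1})$ and $\sigma_k(\bar\nabla^2u)$ is controlled on the relevant set, dividing by $\sum_i\bar T_{ii}$ and reducing the resulting quadratic inequality in $u_{x^1x^1}$ yields $\mu(q)\,u_{x^1x^1}(q,t_1)\le C$, hence $H\le C$ over the whole of $S(TM)|_{B_{3r/4}(q_1)}\times[0,\infty)$, with $C$ independent of $t$. Restricting to $B_{r/2}(q_1)$, where $\mu\equiv1$, and combining with the interior gradient bound of Lemma~\ref{lem_interiorc1estimates2} and the lower bound on $\nabla^2u$ coming from $\bar\nabla^2u\in\Gamma_k^+$, gives $|\nabla^2u|\le C'$ on $B_{r/2}(q_1)\times[0,+\infty)$ with $C'$ depending only on the quantities listed in the statement. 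The main obstacle will be precisely this cut-off bookkeeping: accounting for every term generated by $\nabla\mu$ and $\nabla^2\mu$ and checking that the singular weights $\mu^{-1}$ are dominated by the good second-order term.
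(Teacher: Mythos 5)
Your proposal is correct and follows essentially the same route as the paper: the same localized test function $\mu\,(\nabla^2u+m|\nabla u|^2g)(e,e)$ on the unit tangent bundle, the same case split at $t\le T$ versus $t>T$, the same contraction against $\bar Q_{ij}$ with concavity of $\log\sigma_k$ killing the second-derivative-of-$F$ term, the same absorption of the $\nabla\mu/\mu$ and $\nabla^2\mu/\mu$ errors via $|\nabla\mu|^2\le b_0^2\mu$, and the same final quadratic inequality in $u_{x^1x^1}$ resolved using $m>n-2$ and the lower bound $\sum_i\bar T_{ii}\ge C$ from $(\ref{inequn_structurebounda1})$, followed by the $\Gamma_k^+$ trace condition to bound the Hessian from below. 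The only cosmetic difference is that the paper does not need the auxiliary weight $e^{f(u)}$ you mention as a fallback; the sign of the term $2k(\nabla_{11}^2u+2m|\nabla u|^2)\sigma_k(\bar\nabla^2u)$ when $\nabla_{11}^2u>1$ already suffices.
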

\begin{proof}
For any $T_1> T$, we consider the function $H:\,S(TM)\times[0,T_1)\to\mathbb{R}$ such that
\begin{align*}
H(x,e_x,t)=\mu(x)h(x,e_x,t)
\end{align*}
for $x\in M$, $e_x\in ST_xM$ and $t\geq 0$, where $h$ is defined in the proof of Proposition \ref{prop_global apestimatesd2} and $\mu\in C_0^2(B_{\frac{3r}{4}}(q_1))$ satisfies $(\ref{inequn_conditioncutofffunction})$ for some constant $b_0>0$. By continuity, there exists a point $(q,t_0)\in B_{\frac{3r}{4}}(q_1)\times[0,T_1]$ and $e_{q}\in ST_{q}M$, such that
\begin{align*}
H(q,e_{q},t_0)=\sup_{STM\times[0,T_1]}\mu(x)h(x,e_x,t).
\end{align*}
If $t_0\leq T$, then by assumption, $|\nabla^2u|$ and hence $H$ are well controlled. Therefore, we assume that $t_0>T$. The same as in Proposition \ref{prop_global apestimatesd2}, we choose the normal coordinates $(x^1,...,x^n)$ at $q$ so that $e_q=\frac{\partial}{\partial x^1}$ and we extend $e_q$ to a unit vector field $e=\xi^i\frac{\partial}{\partial x^i}$ in the neighborhood of $q$ in the same way. We define the function
\begin{align*}
\tilde{H}(x,t)=H(x,e(x),t)=\mu(x)\tilde{h}(x,t)=\mu(x)(\xi^i\xi^j\nabla_i\nabla_ju+m|\nabla u|^2)
\end{align*}
in a neighborhood of $(q,t_0)$, for some constant $m>1$ to be fixed. Therefore, at $(q,t_0)$, we have
\begin{align}
&\label{equn_interiorc2d1c1}\tilde{h}_t=\nabla_1\nabla_1u_t+2m\nabla_au_t\nabla_au\geq0,\\
&\label{equn_interiorc2d1c2}\frac{\nabla \mu}{\mu}+\frac{\nabla \tilde{h}}{\tilde{h}}=0,\\
&\bar{T}_{ij}[\frac{\nabla_{ij}^2\mu}{\mu}-\frac{\nabla_i\mu\nabla_j\mu}{\mu^2}+\frac{\nabla_{ij}^2\tilde{h}}{\tilde{h}}-\frac{\nabla_i\tilde{h}\nabla_j\tilde{h}}{\tilde{h}^2}+\nabla_j\nabla_i\xi^a\nabla_{a1}^2u]\leq0,\nonumber\\
&\frac{\Delta \mu}{\mu}-\frac{|\nabla \mu|^2}{\mu^2}+\frac{\Delta\tilde{h}}{\tilde{h}}-\frac{|\nabla \tilde{h}|^2}{\tilde{h}^2}+\Delta \xi^a\nabla_{a1}^2u\leq0.\nonumber
\end{align}
Direct calculation and changing order of derivatives yield at $(q,t_0)$,
\begin{align*}
&\nabla_i\tilde{h}=\nabla_1\nabla_1\nabla_iu+Rm\ast \nabla u+2m\nabla_i\nabla_au\nabla_au,\\
&\nabla_j\nabla_i\tilde{h}=\nabla_1\nabla_1\nabla_j\nabla_i u+\nabla Rm\ast \nabla u+Rm\ast \nabla^2u+2m(\nabla_a\nabla_j\nabla_iu\nabla_au+\nabla_{ja}^2u\nabla_{ia}^2u+Rm\ast \nabla u \ast \nabla u),
\end{align*}
and hence combining these inequalities at the maximum point $(q,t_0)$ we have
\begin{align*}
&\bar{T}_{ij}[(n-2)\nabla_1\nabla_1\nabla_i\nabla_ju+\nabla_1\nabla_1\Delta ug_{ij}]\\
\leq\,\,& \bar{T}_{ij}[(n-2)\nabla_{ji}^2\tilde{h}+\Delta \tilde{h}g_{ij}]-2m[(n-2)\bar{T}_{ij}\nabla_a\nabla_j\nabla_iu\nabla_au+\nabla_a\Delta u\nabla_au\sum_i\bar{T}_{ii}]\\
&-2m[(n-2)\bar{T}_{ij}\nabla_{ja}^2u\nabla_{ia}^2u+\nabla_{ba}^2u\nabla_{ba}^2u\sum_i\bar{T}_{ii}]+(C+C|\nabla^2 u|)\sum_i\bar{T}_{ii}\\
\leq\,\,& -\tilde{h}\bar{T}_{ij}[(n-2)(\frac{\nabla_{ij}^2\mu}{\mu}-2\frac{\nabla_i\mu\nabla_j\mu}{\mu^2})+(\frac{\Delta \mu}{\mu}-2\frac{|\nabla \mu|^2}{\mu^2}) g_{ij}]+(C+C|\nabla^2 u|)\sum_i\bar{T}_{ii}\\
&-2m[(n-2)\bar{T}_{ij}\nabla_{aji}u\nabla_au+\nabla_a\Delta u\nabla_au\sum_i\bar{T}_{ii}]-2m[(n-2)\bar{T}_{ij}\nabla_{ja}^2u\nabla_{ia}^2u+\nabla_{ba}^2u\nabla_{ba}^2u\sum_i\bar{T}_{ii}]\\
\leq\,\,& -2m[(n-2)\bar{T}_{ij}\nabla_{aji}u\nabla_au+\nabla_a\Delta u\nabla_au\sum_i\bar{T}_{ii}]-2m[(n-2)\bar{T}_{ij}\nabla_{ja}^2u\nabla_{ia}^2u+\nabla_{ba}^2u\nabla_{ba}^2u\sum_i\bar{T}_{ii}]\\
&+C(1+(1+\frac{1}{\mu})|\nabla^2 u|)\sum_i\bar{T}_{ii},
\end{align*}
where $C$ depends on $\sup|Rm|$, $b_0$, $\displaystyle\sup_{B_{\frac{3r}{4}}(q_1)\times[0,\infty)}|\nabla u|$ and the uniform upper bound of $|\nabla^2e|(q)$ (see Proposition \ref{prop_global apestimatesd2}), and hence combining this inequality with the two inequalities $(\ref{inequn_flowtwoderivativesd12})$ we have
\begin{align*}
&2k(\nabla_{11}^2u+\nabla_{11}^2u_t)\sigma_k(\bar{\nabla}^2u)-2(n-2)\bar{T}_{ij}[(\nabla_{a11}u\nabla_au+\nabla_{1a}^2u\nabla_{1a}^2u)g_{ij}-\nabla_{i11}u\nabla_ju-\nabla_{1i}^2 u\nabla_{1j}^2u]\\
\leq \,\,&-4km(\nabla_au_t\nabla_au+|\nabla u|^2)\sigma_k(\bar{\nabla}^2u)-2m[(n-2)\bar{T}_{ij}\nabla_{ja}^2u\nabla_{ia}^2u+\nabla_{ba}^2u\nabla_{ba}^2u\sum_i\bar{T}_{ii}]\\
&+C(1+m+(1+m+\frac{1}{\mu})|\nabla^2 u|)\sum_i\bar{T}_{ii}.
\end{align*}
Plugging in $(\ref{equn_interiorc2d1c1})$ and $(\ref{equn_interiorc2d1c2})$, we have
\begin{align*}
&2k\nabla_{11}^2u\sigma_k(\bar{\nabla}^2u)-2(n-2)\bar{T}_{ij}[\nabla_{1a}^2u\nabla_{1a}^2ug_{ij}-\nabla_{1i}^2 u\nabla_{1j}^2u]\\
\leq \,\,&-4km|\nabla u|^2\sigma_k(\bar{\nabla}^2u)-2m[(n-2)\bar{T}_{ij}\nabla_{ja}^2u\nabla_{ia}^2u+\nabla_{ba}^2u\nabla_{ba}^2u\sum_i\bar{T}_{ii}]\\
&+C(1+m+(1+m+\frac{1}{\mu})|\nabla^2 u|)\sum_i\bar{T}_{ii}.
\end{align*}
Since $\nabla_{1i}^2u(q,t_0)=0$ for $i\geq 2$ by the choice of coordinates as in Proposition \ref{prop_global apestimatesd2}, and
\begin{align*}
\nabla_{11}^2u(q,t_0)\geq \nabla_{ii}^2u(q,t_0)
\end{align*}
for $i\geq 2$, and hence we have
\begin{align*}
&2k(\nabla_{11}^2u+2m|\nabla u|^2)\sigma_k(\bar{\nabla}^2u)+(2m-2(n-2))\nabla_{11}^2u\nabla_{11}^2u\sum_i\bar{T}_{ii}\\
\leq \,\,&C(1+m+n(1+m+\frac{1}{\mu})|\nabla_{11}^2 u|)\sum_i\bar{T}_{ii}.
\end{align*}
We take $m$ large and  use the equation $(\ref{equn_flowsigmak})$ to obtain
\begin{align*}
&2k(\nabla_{11}^2u+2m|\nabla u|^2)\bar{\beta}_{k,n}e^{2k(u+u_t)}+\nabla_{11}^2u\nabla_{11}^2u\sum_i\bar{T}_{ii}\\
\leq \,\,&C(1+(1+\frac{1}{\mu})|\nabla_{11}^2 u|)\sum_i\bar{T}_{ii},
\end{align*}
for some uniform $C>0$ independent of $T_1$, and hence if $\nabla_{11}^2u(q,t_0)>1$, the first term in this inequality is positive and since $\sum_i\bar{T}_{ii}$ is uniformly bounded from below by $(\ref{inequn_structurebounda1})$, we have
\begin{align*}
\mu\nabla_{11}^2u(q,t_0)\leq \,\,C,
\end{align*}
for some uniform constant $C>0$ independent of $T_1$, and hence
\begin{align*}
\tilde{H}\leq C
\end{align*}
in $B_{\frac{3r}{4}}(q_1)\times[0,T_1]$ with $C>0$ independent of $T_1$;  while if $\nabla_{11}^2u(q,t_0)\leq1$, we trivially have the uniform upper bound of $\tilde{H}$ by its definition and the bound of $|\nabla u|$ on $B_{\frac{3r}{4}}(q_1)\times[0,\infty)$. By the arbitrary choice of $T_1>T$, $\tilde{H}$ has a uniform upper bound on $B_{\frac{3r}{4}}(q_1)\times[0,\infty)$. In particular,
\begin{align*}
\nabla_{11}^2u\leq \,\,C,
\end{align*}
in $B_{\frac{r}{2}}(q_1)\times[0,\infty)$. Since $\bar{\nabla}^2u\in \Gamma_k^+$, and $|\nabla u|$ is uniformly bounded in $B_{\frac{3r}{4}}(q_1)$, we have that there exists a uniform constant $\alpha>-\infty$ such that
\begin{align*}
\Delta u\geq \alpha,
\end{align*}
and hence
\begin{align*}
|\nabla^2 u|\leq n^3(C+|\alpha|),
\end{align*}
on $B_{\frac{r}{2}}(q_1)\times[0,\infty)$. This completes the proof of the lemma.
\end{proof}

Now we prove the convergence of the flow and the asymptotic behavior near the boundary as $t\to\infty$.

\begin{proof}[Proof of Theorem \ref{thm_main}]
Long time existence of the solution $u$ has been obtained in Theorem \ref{thm_longtimeexistence}, and we only need the consider the convergence of $u$ and its asymptotic behavior near the boundary as $t\to \infty$.

First we establish the uniform upper bound estimates on $u$ on any given compact subset of $M^{\circ}$. By the Maclaurin's inequality, $u$ is a subsolution to the $\sigma_1$-Ricci curvature flow $(\ref{equn_flowsigmak})$. By the maximum principle for the $\sigma_1$-Ricci curvature flow in Lemma \ref{lem_comparison}, to get the upper bound of $u$, it suffices to find a super-solution to the scalar curvature equation i.e., $(\ref{equn_sigmak})$ with $k=1$ satisfying $(\ref{equn_boundaryblowingup})$ near $\partial M$. Direct application of Lemma 5.2 in \cite{GSW}, where a sequence of super-solutions to the scalar curvature equation on corresponding small geodesic balls blowing up on the boundary was constructed, yields the upper bound of $u$: 
\begin{align*}
\limsup_{x\to\partial M}[u(x,t)+\log(r(x))]\leq 0,
\end{align*}
uniformly for all $t>0$; and moreover, for any compact subset $U\subseteq M^{\circ}$, there exists a constant $C>0$ depending on $U$ such that $u(x,t)\leq C$ for all $(x,t)\in U\times[0,+\infty)$. Here is an alternative argument: by maximum principle  for $\sigma_1$-Ricci curvature flow in Lemma \ref{lem_comparison},
\begin{align*}
u(x,t)\leq u_{LN}(x),
\end{align*}
for $(x,t)\in M^{\circ}\times[0,\infty)$, where $u_{LN}$ is the
 solution to the Loewner-Nirenberg problem of the constant scalar curvature equation on $M$. Recall that
\begin{align*}
u_{LN}(x)\leq -\log(r(x))+o(1)\,\,\,\text{near the boundary},
\end{align*}
with $o(1)\to 0$ as $x\to \partial M$, see in \cite{LL}\cite{LM}\cite{ACF} for instance.

By Lemma \ref{Lem_monotonicity}, $u(x,t)$ is increasing along $t>0$ and hence
\begin{align*}
u_0(x)\leq u(x,t) \leq u_{LN}(x)
\end{align*}
for $(x,t)\in M^{\circ}\times [0,+\infty)$. Or just use the super-solution to $(\ref{equn_sigmak})$ on a small ball centered at $x$ constructed in  Lemma 5.2 in \cite{GSW} instead of $u_{LN}$. Therefore, $u(x,t)$ converges as $t\to\infty$ for any $x\in M^{\circ}$. By Lemma \ref{lem_interiorc1estimates2} and Lemma \ref{lem_interiorc2estimates2},
we have that for any compact subsets $U\subseteq U_1\subseteq M^{\circ}$ with $U\subseteq U_1^{\circ}$, there exists a constant $C>0$ such that
\begin{align*}
|\nabla u|+|\nabla^2u|\leq C
\end{align*}
in $U_1\times[0,\infty)$ and hence, the equation $(\ref{equn_flowsigmak})$ is uniformly parabolic and  by $(\ref{equn_flowsigmak})$, $u_t$ has a uniform upper bound on $U_1\times[0,\infty)$.   By Krylov's Theorem and the classical Schauder estimates, we have that there exists a uniform constant $C>0$ depending on $U_1$ such that
\begin{align*}
\|u\|_{C^{4,2}(U\times[0,\infty))}\leq C,
\end{align*}
and
\begin{align}\label{inequn_c4c2boundflow}
\|u\|_{C^{4,\alpha}(U)}\leq C,
\end{align}
for all $t\geq0$. Since $u$ increases and has uniform upper bound in $U$, by the Harnack inequality of the linear uniformly parabolic equation $(\ref{equn_flowlinear1})$ for $u_t$, we have
\begin{align*}
v=u_t\to 0
\end{align*}
uniformly on $U$ as $t\to+\infty$. Therefore, $u(x,t)\to u_{\infty}(x)$ uniformly for $x\in U$ as $t\to+\infty$. By the uniform bound $(\ref{inequn_c4c2boundflow})$ and the interpolation inequality, we have
\begin{align*}
u(x,t)\to u_{\infty}(x)
\end{align*}
in $C^4(U)$ as $t\to\infty$. By the arbitrary choice of the compact subset $U \subseteq M^{\circ}$, we have that $u_{\infty}$ is a solution to $(\ref{equn_sigmak})$ in $M^{\circ}$.

Now we consider the lower bound of $u$ near the boundary. Applying Lemma \ref{lem_boundarylowerbd1} to be proved later, we have that there exist $\delta_1>0$ small and $T>0$ large, such that
\begin{align*}
u(x,t)\geq -\log(r(x)+\epsilon(t))+w(x)
\end{align*}
for $x\in M$ with $r(x)\leq \delta_1$ and $t\geq T$, where $w(x)\leq 0$ with $w\big|_{\partial M}=0$ and $\epsilon(t)\to 0$ as $t\to+\infty$. By the upper and lower bound estimates on $u$ near the boundary, we have
\begin{align*}
u_{\infty}(x)+\log(r(x))\to 0
\end{align*}
uniformly as $x\to \partial M$.
\end{proof}

We will show the lower bound of the asymptotic behavior of $u$ near the boundary as $t\to\infty$, for which we need $\phi$ to increase not too slowly.
\begin{lem}\label{lem_boundarylowerbd1}
Let $(M,g)$, $u_0$, $\phi$, $T_1>T$ and $u$ be as in Theorem \ref{thm_main}. Let $r(x)$ be the distance function of $x\in M$ to the boundary $\partial M$.
 Then there exist  $\delta_1>0$ small and $T_2>T_1$, such that
\begin{align*}
u(x,t)\geq -\log(r(x)+\epsilon(t))+w(x)
\end{align*}
for $x\in M$ with $r(x)\leq \delta_1$ and $t\geq T_2$, where $\epsilon=\xi(t)^{-1}$ and $w$ is a function of $C^2$ where $r(x)\leq \delta_1$ such that $w(x)\leq 0$ with $w\big|_{\partial M}=0$.
\end{lem}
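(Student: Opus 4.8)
The plan is to place below $u$, on a thin collar of $\partial M$, an explicit lower barrier of Loewner--Nirenberg type and invoke the comparison principle of Lemma \ref{lem_comparison}. Fix a small $\delta_1>0$ and set, on the collar $\{x\in M:\,r(x)\le\delta_1\}$ and for $t\ge T_2$,
\[
\underline{u}(x,t)=-\log\bigl(r(x)+\epsilon(t)\bigr)+w\bigl(r(x)\bigr),\qquad \epsilon(t)=\xi(t)^{-1},
\]
where $w\in C^2([0,\delta_1])$ is a convex, nonincreasing profile with $w(0)=0$, $w\le 0$, $w'(0)=-A$ for a large constant $A$ and $w(\delta_1)$ suitably negative; a concrete choice is $w(s)=-As+\tfrac{A}{2\delta_1}s^2$, so that $w(\delta_1)=-\tfrac{A\delta_1}{2}$. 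The assertion follows once $\underline{u}$ is shown to be a subsolution of $(\ref{equn_flowsigmak})$ on $\{r\le\delta_1\}\times[T_2,\infty)$ that lies below $u$ on the parabolic boundary of that region. This $\underline{u}$ is the parabolic counterpart of the barriers of \cite{GSW} and \cite{Guan1}.

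First I would expand $\bar\nabla^2\underline{u}$. With $\rho=r+\epsilon$ and $F(s)=-\log(s+\epsilon)+w(s)$, using $|\nabla r|=1$ and $\nabla^2 r(\nabla r,\cdot)=0$ where $r$ is smooth, one finds $\bar\nabla^2\underline{u}(\nu,\nu)=(n-1)F''+F'\Delta r-Ric_g(\nu,\nu)$ in the normal direction (the $(F')^2$ contributions cancel) and $\bar\nabla^2\underline{u}(e,e)=F''+(n-2)(F')^2+F'(\Delta r+(n-2)\nabla^2r(e,e))-Ric_g(e,e)\ge F''-|F'|C-C$ in tangential directions; since $w$ is convex, $F''=\rho^{-2}+w''\ge\rho^{-2}>0$, so for $\delta_1$ small relative to the curvature of $(M,g)$ and to $\|\nabla^2r\|,\|\Delta r\|$ on the collar, and with $A$ no larger than order $\delta_1^{-2}$, all eigenvalues of $\bar\nabla^2\underline{u}$ are positive, hence $\bar\nabla^2\underline{u}\in\Gamma_k^+$. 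Near $\partial M$, where $w\approx -As$ and $w''$ is negligible against $\rho^{-2}$, all eigenvalues exceed $\tfrac{n-1}{\rho^2}$ up to controlled errors, so $\sigma_k(\bar\nabla^2\underline{u})\ge\bar\beta_{k,n}\rho^{-2k}\bigl(1+c\,((n-2)A-\Delta r)\,\rho+O(A^2\rho^2)\bigr)$ with $c>0$. To verify that $\underline{u}$ is a flow subsolution, i.e.\ $\sigma_k(\bar\nabla^2\underline{u})\ge\bar\beta_{k,n}\rho^{-2k}e^{2kw}e^{2k\underline{u}_t}$, note $e^{2kw}\le 1$ and compute $\underline{u}_t=-\epsilon'(t)/\rho=\xi'(t)\epsilon^2/\rho$; by $(\ref{inequn_lowspeedincreasing})$, for $t\ge T$ this is $\le\tau\epsilon^2/\rho\le\tau\epsilon$, which tends to $0$, so the right-hand side is $\le\bar\beta_{k,n}\rho^{-2k}(1+4k\tau\epsilon)$. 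Near $\partial M$ the gain $c(n-2)A\rho\ge c(n-2)A\epsilon$ dominates $4k\tau\epsilon$ once $A$ is a sufficiently large multiple of $\tau$; away from $\partial M$ the positive terms $w''$ and $(F')^2$ make $\sigma_k(\bar\nabla^2\underline{u})$ far larger than $\rho^{-2k}$, so the inequality is immediate there. This is the step where the low-speed hypothesis is essential: $\rho^{-2k}$ blows up as $\rho\to0$ while $\underline{u}_t$ remains bounded.

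Finally I would verify the parabolic boundary comparison. On $\partial M\times[T_2,\infty)$: $\underline{u}=-\log\epsilon(t)=\log\xi(t)\le\phi=u$ by the hypothesis on $\phi$, provided $T_2\ge T_1$. On $\{r=\delta_1\}\times[T_2,\infty)$ and on $\{r\le\delta_1\}\times\{T_2\}$: by Lemma \ref{Lem_monotonicity} $u\ge u_0$, and the uniform interior upper bound $u\le u_{LN}$ from the proof of Theorem \ref{thm_main} makes $\sup_{\{r=\delta_1\}\times[0,\infty)}|u|$ finite; combining this with a preliminary comparison against a static subsolution of the form $-\log(r+\epsilon_0)-As-C_{**}$ on a fixed small collar yields $u\ge -\log\delta_1-C'$ on $\{r=\delta_1\}$ with $C'$ independent of $\delta_1$, so choosing $A\delta_1$ large forces $\underline{u}(\cdot,t)\le -\log\delta_1+w(\delta_1)\le u$ on $\{r=\delta_1\}$; this is compatible with Step 2 because the descent of $w$ is placed at distance $\sim\delta_1$ from $\partial M$, where $\rho^{-2}\sim\delta_1^{-2}$ dwarfs the bounded ingredients. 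At $t=T_2$ one uses in addition that $\underline{u}(\cdot,T_2)\le\log\xi(T_2)$ near $\partial M$ while $u(\cdot,T_2)|_{\partial M}=\phi(\cdot,T_2)\ge\log\xi(T_2)$, together with the steep decrease of $w$, after taking $T_2$ large so that $\epsilon(T_2)$ is small. Lemma \ref{lem_comparison} then gives $u\ge\underline{u}$ on $\{r\le\delta_1\}\times[T_2,\infty)$, which is exactly the claim, with $w\le0$ and $w|_{\partial M}=0$. The main obstacle is precisely this last calibration: $w$ must be flat enough near $\partial M$, and $T_2$ large enough, that the small drift $\underline{u}_t\sim\tau\epsilon$ is absorbed into the curvature-driven gain in $\sigma_k(\bar\nabla^2\underline{u})$, yet must descend far enough by $r=\delta_1$ to undercut the a priori lower bound for $u$ there, all while keeping $\bar\nabla^2\underline{u}\in\Gamma_k^+$; the cancellation of the $(F')^2$ term in $\bar\nabla^2\underline{u}(\nu,\nu)$ and the freedom to take $w$ convex are what make these three demands simultaneously satisfiable.
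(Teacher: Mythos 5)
Your proposal is correct and follows essentially the same route as the paper: the same moving barrier $-\log(r(x)+\epsilon(t))+w(r(x))$ with $w\le 0$ convex and $w(0)=0$, the low-speed hypothesis $\xi'\le\tau$ used exactly as in the paper to bound $\underline{u}_t\le\tau\epsilon\le\tau\tilde r$ so that the $O(A\rho)$ gain in $\sigma_k(\bar\nabla^2\underline{u})$ over $\bar\beta_{k,n}\rho^{-2k}$ absorbs it, and the comparison principle of Lemma \ref{lem_comparison} on the parabolic boundary of the collar. The only differences are cosmetic: the paper takes $w(r)=A((r+\delta)^{-p}-\delta^{-p})$ (so that $|w'|\ge Ap$ uniformly on the collar, which slightly streamlines the determinant estimate and the comparisons on $\Sigma_{\delta_1}$ and at $t=T_2$, where $u\ge u_0$ and the $C^1$ bound on $u(\cdot,T_2)$ suffice without your extra ``preliminary comparison''), and it verifies the subsolution property for $k=n$ via the determinant and then invokes Maclaurin's inequality rather than bounding each $\sigma_k$ directly.
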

\begin{proof}
Let $\delta_1>0$ be a small constant to be fixed. Define the exponential map $\text{Exp}:\,\partial M\times[0,\delta_1]\to M$ such that $\text{Exp}_q(s)\in M$ is the point on the geodesic starting from $q\in \partial M$ in the direction of inner normal vector with distance $s$ to $q$. $\delta_1$ is chosen small so that $\text{Exp}$ is a diffeomorphism to the image. Define
\begin{align*}
U_{\delta_1}=\{\text{Exp}_q(s)\big|\,(q,s)\in \partial M\times[0,\delta_1]\}.
\end{align*}
The metric has the orthogonal decomposition
\begin{align*}
g=ds^2+g_{s},
\end{align*}
with $g_{s}$ the restriction of $g$ on $\Sigma_{s}=\{z\in M\big|\,\,r(z)=s\}$ for $0\leq s\leq \delta_1$.  Define the function
\begin{align*}
\underline{u}(x,t)=-\log(r(x)+\epsilon(t))+w(x)
\end{align*}
for $(x,t)\in U_{\delta_1}\times[T,+\infty)$ where
\begin{align*}
w(x)=A(\frac{1}{(r(x)+\delta)^p}-\frac{1}{\delta^p})
\end{align*}
with constants $A>0$, $p>1$ large and $\delta>0$ small to be determined. By definition, we have
\begin{align}\label{inequn_slowspeedincreasing2-2}
-\frac{\epsilon'(t)}{\epsilon(t)^2}=\xi'(t)\leq \tau
\end{align}
for $t\geq T$. Let $\tilde{r}(x,t)=r(x)+\epsilon(t)$. For any $x_0\in U_{\delta_1}^{\circ}$, let $\{e_1,...,e_n\}$ be an orthonormal basis at $x_0$ such that $e_1=\frac{\partial}{\partial r}$. The same calculation as in Lemma 5.1 in \cite{GSW} yields
\begin{align*}
\bar{\nabla}^2\underline{u}=&-Ric_g+(n-2)\nabla^2w+\Delta w g+(n-2)(w')^2\left(\begin{matrix}0&&&\\
&1&&\\ &&\ddots&\\ &&&1 \end{matrix}\right)\\
&+\frac{1}{\tilde{r}^2}\left[(n-1)g-2(n-2)\tilde{r}w'\left(\begin{matrix}0&&&\\
&1&&\\ &&\ddots&\\ &&&1 \end{matrix}\right)-\tilde{r}((n-2)\nabla^2r+\Delta r g)\right].
\end{align*}
Recall that $\nabla^2r$ and $\Delta r$ are the second fundamental form and the mean curvature of $\Sigma_{r(x_0)}$, which are uniformly bounded by a constant $\gamma\geq 1$ on $U_{\delta_1}$:
\begin{align*}
\gamma g\geq (n-2)\nabla^2r+\Delta r g \geq -\gamma g.
\end{align*}
We denote the bracketed term above on the right hand side as $\Phi$. Taking $\delta+\delta_1<1$, we have
\begin{align*}
w'=-Ap(r+\delta)^{-p-1}\leq -Ap,
\end{align*}
and hence,
\begin{align*}
\frac{1}{\tilde{r}^2}\Phi\geq \frac{1}{\tilde{r}^2}\left[(n-1)g+\,\tilde{r}\left(\begin{matrix}-\gamma&&&\\
&2(n-2)Ap-\gamma&&\\ &&\ddots&\\ &&&2(n-2)Ap-\gamma \end{matrix}\right)\right].
\end{align*}
Now we let $\delta_1<\frac{1}{20\gamma}$ and choose $T'>0$ to be large so that $\epsilon(t)<\frac{1}{20\gamma}$ for $t\geq T'$. There exists $K_0>0$ such that for $Ap>K_0$, we have that
\begin{align*}
\det(\frac{1}{\tilde{r}^2}\Phi)&\geq \frac{1}{\tilde{r}^{2n}}(n-1)^n(1-\frac{\gamma \tilde{r}}{n-1})(1+\frac{2(n-2)Ap-\gamma}{n-1}\tilde{r})^{n-1}\\
&\geq \frac{1}{\tilde{r}^{2n}}(n-1)^n(1+Ap\tilde{r}).
\end{align*}
Recall that $-Ric_g\geq0$. For any large constant $\Lambda>0$, there exists  $A_0>0$ and $p_0>0$ so that for $A>A_0$ and $p\geq p_0$,
\begin{align*}
(n-2)\nabla^2w+\Delta w g\geq \Lambda g,
\end{align*}
on $U_{\delta_1}$. Therefore, if we also assume $Ap\geq 8n\tau$, then we obtain
\begin{align*}
\log(\det(\bar{\nabla}^2\underline{u}))-\log(\bar{\beta}_{n,n})-2n\underline{u}&\geq \log(\det(\frac{1}{\tilde{r}^2}\Phi))-\log(\bar{\beta}_{n,n})-2n\underline{u}\\
&\geq \log\big(\tilde{r}^{-2n}(1+Ap\tilde{r})\big)-2n\underline{u}\\
&\geq  \log(1+Ap\tilde{r})\geq \log(1+8n\tau\tilde{r}),
\end{align*}
and hence for $\tilde{r}\leq (8n\tau)^{-1}$ and $t\geq \max\{T,T'\}$, by $(\ref{inequn_slowspeedincreasing2-2})$ we have
\begin{align}\label{inequn-subsolutionnearboundary3}
\log(\det(\bar{\nabla}^2\underline{u}))-\log(\bar{\beta}_{n,n})-2n\underline{u}&\geq 4n\tau \tilde{r}\geq -2n\frac{\epsilon'}{\tilde{r}}=2n\underline{u}_t.
\end{align}
Since $\displaystyle\lim_{t\to\infty}\epsilon(t)=0$, we take $T_2\geq \max\{T_1,T'\}$ such that $\epsilon(t)\leq (16n\tau)^{-1}$ for $t\geq T_2$ and let 
\begin{align*}
\delta_1<\min\{(16n\tau)^{-1},(20\gamma)^{-1}\}.
\end{align*}
We will choose $A$ and $p$ large so that $\underline{u}$ gives a lower bound of $u$ on $U_{\delta_1}\times[T_2,\infty)$. Notice that $\partial U_{\delta_1}=\Sigma_{\delta_1}\bigcup \partial M$. By assumption we have
\begin{align*}
\underline{u}(x,t)=\log(\xi(t))\leq \phi(x,t)
\end{align*}
for $(x,t)\in \partial M\times[T_2,\infty)$. On $\Sigma_{\delta_1}$, since $u$ is increasing, we have $u(x,t)\geq u_0(x)$. Notice that there exists $A_1>0$ such that for $A\geq A_1$ and any $p\geq 1$, we have
\begin{align*}
-\log(\delta_1)+A\,((\delta_1+\delta)^{-p}-\delta^{-p})<\inf_{\Sigma_{\delta_1}}u_0,
\end{align*}
and hence we have on $\Sigma_{\delta_1}\times[T_2,\infty)$,
\begin{align*}
\underline{u}\leq u.
\end{align*}
Finally, we consider the control on $U_{\delta_1}\times\{T_2\}$. Since $\underline{u}(\cdot,T_2),\,u(\cdot,T_2)\in C^1(M)$ and $\underline{u}\leq u=\phi$ on $\partial M\times\{T_2\}$, there exist $A_2>0$ and $p_2>0$ such that for $A\geq A_2$ and $p\geq p_2$, we have
\begin{align*}
\underline{u}\leq u
\end{align*}
on $U_{\delta_1}\times\{T_2\}$.

In summary, we assume
\begin{align*}
&Ap\geq \max\{K_0,\,8n\tau\},\,\,p\geq\max\{1,\,p_0,\,p_2\},\,\,A\geq\max\{A_0,\,A_1,\,A_2\},\\
&\,\,\,\delta+\delta_1<1,\,\,\delta_1<\min\{(16n\tau)^{-1},(20\gamma)^{-1}\},
\end{align*}
and $\delta_1>0$ is small so that $\text{Exp}$ is a diffeomorphism. Therefore, $\underline{u}$ is a sub-solution to $(\ref{equn_flowsigmak})$  for $k=n$  by $(\ref{inequn-subsolutionnearboundary3})$ and hence a sub-solution to $(\ref{equn_flowsigmak})$ for $1\leq k\leq n$ on $U_{\delta_1}\times[T_2,\infty)$ , by Maclaurin's inequality; moreover,
\begin{align*}
\underline{u}\leq u,\,\,\,\text{on}\,\,(\partial M \bigcup \Sigma_{\delta_1})\times[T_2,\infty)\,\bigcup\,U_{\delta_1}\times\{T_2\}.
\end{align*}
Therefore, by the maximum principle in Lemma \ref{lem_comparison}, we have
\begin{align*}
u(x,t)\geq \underline{u}(x,t)=-\log(r(x)+\epsilon(t))+A\,((r(x)+\delta)^{-p}-\delta^{-p})
\end{align*}
on $U_{\delta_1}\times[T_2,\infty)$.

\end{proof}

\begin{proof}[Proof of Corollary \ref{cor_Dirichletboundaryvalueproblemflow}]
The equation $(\ref{equn_sigmak})$ is conformally covariant, and hence it is equivalent to consider the case when the background metric $g$ is the Euclidean metric
 when $(M,g)$ is a domain in the Euclidean space, while $g\in C^{4,\alpha}$ is chosen to be a metric constructed in \cite{Lohkamp} (see Section \ref{section_sub1.1} in the present paper)
  such that $Ric_g<-(n-1)g$ in the conformal class for a general manifold $(M,g)$. Let $u_0=\underline{u}+\min\{0,\,\inf_{\partial M}\varphi_0\}$ with $\underline{u}$ a sub-solution constructed in Section \ref{section_sub1.1} for $A>0$ and $p>0$ large,
   and when $(M,g)$ is a domain in Euclidean space one can just take $\underline{u}$ to be the global sub-solution in \cite{GSW} (just take the function $\eta(s)=s$ for the sub-solution $\underline{u}$ in Section \ref{section_sub1.1}) with $A>0$ and $p>0$ large. Then $u_0$ is a strict sub-solution near the boundary with $u_0<\varphi_0$ on $\partial M$ and hence,
   we can construct the boundary data $\phi\in C^{4+\alpha,2+\frac{\alpha}{2}}(\partial M\times[0,\infty))$  satisfying the compatible condition $(\ref{equn_compatibleequations})$ at $t=0$ such that
    $\phi_t\geq 0$ on $\partial M \times[0,\infty)$ and $\phi(x,t)\to \varphi_0(x) $ uniformly in $C^{4,\alpha'}(\partial M)$ as $t\to\infty$ for some $0<\alpha'<1$.

   Consider the Cauchy-Dirichlet boundary value problem $(\ref{equn_flowsigmak})-(\ref{equn_flowboundarydata})$. It is clear that Lemma \ref{lem_comparison} and Lemma \ref{Lem_monotonicity} still hold true.  Recall that by Maclaurin's inequality $u$ is a sub-solution to the $\sigma_1$-Ricci curvature flow
     $(\ref{equn_flowsigmak})$. On the other hand, for the $\sigma_1$-Ricci equation $(\ref{equn_sigmak})$, which is the Yamabe equation, classical variational
      methods yield a unique minimizing solution $u_1$ to the Dirichlet boundary value problem with $u_1=\varphi_0$ on $\partial M$, see \cite{ML}.
      By Lemma \ref{lem_comparison} for the $\sigma_1$-Ricci curvature flow, we have $u(x,t)\leq u_1(x)$ for $(x,t)\in M\times[0,\infty)$ and hence we have a uniform
       upper bound of $u$. Also, the a priori $C^2$ estimates from Lemma \ref{lem_gradientboundaryests} to Proposition \ref{prop_global apestimatesd2} hold with uniform bound of $\|u(\cdot,t)\|_{C^2(M)}$ independent of $t>0$. By Theorem \ref{thm_longtimeexistence}, we have the long time existence of the unique solution $u$.
   Things are even better in this case: there exists a uniform constant $C>0$ such that for any $T>0$,
   \begin{align}\label{inequn_DirichletboundaryvalueproblemC3}
   \|u\|_{C^{4+\alpha,2+\frac{\alpha}{2}}(M\times[T,T+1])}\leq C,
   \end{align}
    by Krylov's Theorem and the standard Schauder estimates. Remark that here we do not need the locally uniformly interior estimates.

  By $(\ref{inequn_DirichletboundaryvalueproblemC3})$, there exists a sequence $t_j\to \infty$, such that $u(x,t_j)\to u_{\infty}(x)$ in $C^4(M)$ for some $u_{\infty}\in C^{4,\alpha}(M)$ as $t_j\to\infty$. By monotonicity of $u$, $u(x,t)\to u_{\infty}(x)$ uniformly for $x\in M$ as $t\to\infty$. By $(\ref{inequn_DirichletboundaryvalueproblemC3})$ and the interpolation inequality, we have $u(x,t)\to u_{\infty}(x)$ uniformly in $C^4(M)$ as $t\to \infty$
       and hence, $u_{\infty}=\varphi_0$ on $\partial M$. Since $u_t\geq0$ satisfies the linear uniformly parabolic equation $(\ref{equn_flowlinear1})$,
       by Harnack inequality, $u_t\to0$ locally uniformly in $M^{\circ}$ as $t\to\infty$ and hence, $u_{\infty}$ is a solution to $(\ref{equn_sigmak})$. This completes the proof of the corollary.

 \end{proof}

 \end{document}